\documentclass[a4paper,12pt,reqno]{amsart}

\usepackage{amssymb,amsmath,array,amscd,hhline}

\usepackage[mathscr]{euscript}
\usepackage{stmaryrd}
\usepackage{ulem}

\usepackage{mathrsfs}

\def\SSigma_#1{\BS(#1)}

\usepackage{tikz-cd}

\def\kappa{\varkappa}

\usepackage{mathrsfs}

\def\({\left(}
\def\){\right)}
\def\[{\left[}
\def\]{\right]}

\def\GL{\mathop{\rm GL}\nolimits}
\def\Gr{\mathop{\rm Gr}\nolimits}

\def\k{\Bbbk}

\def\tr{\mathop{\rm tr}\nolimits}

\def\dlim{\mathop{\rm lim}\limits_{\longrightarrow}}

\voffset=-55pt \hoffset=-1.8cm \textwidth=467pt \textheight=695pt
\headsep=35pt

\setlength\paperwidth{210mm}

\def\lm{\lambda}

\def\C{\mathbb C}
\def\R{\mathbb R}

\def\Z{\mathbb Z}

\def\Seq{\mathbf{Seq}}

\def\BS{\mathrm{BS}}

\def\p{{}^p\!}
\def\<{\langle}
\def\>{\rangle}

\def\n{\mathfrak{n}}
\def\r{\mathfrak{r}}

\renewcommand\emptyset{\varnothing}
\renewcommand\phi{\varphi}

\def\im{\mathop{\rm im}}
\def\id{\mathop{\mathrm{id}}}
\renewcommand\epsilon{\varepsilon}

\def\X{\mathcal X}
\def\suchthat{\mathbin{\rm |}}

\def\f{\mathbf f}
\def\ito{\stackrel\sim\to}
\def\pt{{\rm pt}}

\def\q{\mathfrak q}
\def\p{\mathfrak p}

\def\SL{\mathop{\rm SL}\nolimits}
\def\SU{\mathop{\rm SU}\nolimits}

\newtheorem{theorem}{Theorem}
\newtheorem{proposition}[theorem]{Proposition}
\newtheorem{lemma}[theorem]{Lemma}
\newtheorem{example}[theorem]{Example}
\newtheorem{corollary}[theorem]{Corollary}

\newtheorem{definition}[theorem]{Definition}

\def\span{\mathop{\rm span}}

\def\le{\leqslant}
\def\ge{\geqslant}

\def\csh#1#2{\underline{#1}{}_{{}_{\scriptstyle #2}}}

\def\={\equiv}

\def\2{{(2)}}


%

\title{Nested fibre bundles in Bott-Samelson varieties}
\author{Vladimir Shchigolev}
\address{Financial University under the Government of the Russian Federation\\
49 Leningradsky Prospekt, Moscow, Russia}

\email{shchigolev\_vladimir@yahoo.com}

\begin{document}

\maketitle

\begin{abstract} We give a topological explanation of the main results of
V.\,Shchigolev, Categories of Bott-Samelson Varieties, Algebras and Representation Theory,
{\bf 23} (2), 349--391, 2020. To this end, we consider some subspaces of Bott-Samelson varieties invariant under
the action of the maximal compact torus $K$ and study their topological and
homological properties. Moreover, we describe multiplicative generators
of the equivariant cohomologies of Bott-Samelson varieties.
\end{abstract}

\section{Introduction}
Let $G$ be a semisimple complex algebraic group, $B$ be its Borel subgroup and $T$ be a maximal torus contained in $B$.
For any simple reflection $t$, we have the {\it minimal parabolic subgroup} $P_t=B\cup BtB$.
In this paper, we investigate how Bott-Samelson varieties
$$
\BS(s)=P_{s_1}\times\cdots\times P_{s_n}/B^n
$$
are related
on the levels of topology and cohomology for different sequences of
simple reflections $s=(s_1,\ldots,s_n)$. In the formula above,
the power $B^n$ acts on the product $P_{s_1}\times\cdots\times P_{s_n}$
on the right by the
rule
$$
(g_1,g_2,\ldots,g_n)(b_1,b_2,\ldots,b_n)=(g_1b_1,b_1^{-1}g_2b_2,\ldots,b_{n-1}^{-1}g_nb_n).
$$
Moreover, the torus $T$ acts on $\BS(s)$ via the first factor
$$
t(g_1,g_2,\ldots,g_n)B^n=(tg_1,g_2,\ldots,g_n)B^n.
$$
We have the $T$-equivariant map $\pi(s):\BS(s)\to G/B$ that takes an
orbit $(g_1,g_2,\ldots,g_n)B^n$ to $g_1g_2\cdots g_nB$.

Let $s=(s_1,\ldots,s_n)$ be a subsequence of a sequence $\tilde s=(\tilde s_1,\ldots,\tilde s_{\tilde n})$,
that is, there exists a strictly increasing map $p:\{1,\ldots,n\}\to\{1,\ldots,\tilde n\}$
such that $\tilde s_{p(i)}=s_i$ for any $i=1,\ldots,n$. Then we have a $T$-equivariant embedding
$\BS(s)\hookrightarrow\BS(\tilde s)$
defined by the rule $(g_1,\ldots,g_n)B^n\mapsto(\tilde g_1,\ldots,\tilde g_{\tilde n})B^{\tilde n}$, where
$\tilde g_{p(i)}=g_i$ for any $i=1,\ldots,n$ and $\tilde g_j=1$ for $j$ not in the image of $p$.

Hence we get the restriction map $H^\bullet_T(\BS(\tilde s),\k)\to H^\bullet_T(\BS(s),\k)$
for any ring of coefficients $\k$. It turns out that there exist many other such maps~\cite{CBSV} not necessarily when
$s$ is a subsequence of $\tilde s$. These additional morphisms constitute the so called
{\it folding categories} $\widetilde\Seq$, see~\cite[Section 3.3]{CBSV}.
Until now this was a purely cohomological phenomenon
proved with the help of M.H\"artherich's criterion for the image of the localization~\cite{Haerterich}.

The main aim of this paper is to provide a topological explanation of the
existence of the folding categories and to develop the corresponding topology
(fibre bundles, affine pavings, etc.) that may be interesting in its own right.
Moreover, in a subsequent paper, the author plans to obtain some tensor product theorems for cohomologies of
Bott-Samelson varieties, applying the topological spaces introduced here in full generality.

As was noted in~\cite{CBSV}, the existence of the folding categories can not
be explained on the level of $T$-equivariant topology. The main idea to overcome this obstacle
that we pursue in this paper is to use the equivariant cohomology with respect
to the maximal compact torus $K$. Using this torus and certain compact subgroups $C_{s_i}$
(defined in Section~\ref{Compact subgroups of complex algebraic groups}),
we introduced the {\it compactly defined Bott-Samelson variety}
$$
\BS_c(s)=C_{s_1}\times\cdots\times C_{s_n}/K^n
$$
for any sequence $s=(s_1,\ldots,s_n)$ of reflections not
necessarily simple ones. Here $K^n$ acts on
$C_{s_1}\times\cdots\times C_{s_n}$ on the right by the same
formula as $B^n$ above.
The torus $K$ acts continuously on $\BS_c(s)$ via the first factor and we can consider
the $K$-equivariant cohomology of this space.
Note that topologically $\BS_c(s)$ is nothing more than a Bott tower~\cite{GK}.
However, we consider this space together with the natural projection $\pi_c(s):\BS_c(s)\to C/K$,
where $C$ is the maximal compact subgroup of $G$,
given by $(c_1,\ldots,c_n)K^n\mapsto c_1\cdots c_nK$. This makes the situation more complicated.

From the Iwasawa decomposition it follows that $\BS_c(s)$ and $\BS(s)$
are isomorphic as $K$-spaces if all reflections of $s$ are simple~\cite{GK}. Therefore, we can identify $H^\bullet_K(\BS_c(s),\k)$
and $H^\bullet_T(\BS(s),\k)$.
The space $\BS_c(s)$ has enough $K$-subspaces $\BS_c(s,v)$, see Section~\ref{Nested_structures} for the definitions,
to recover the main results of~\cite{CBSV} by purely topological arguments without resorting to localizations
and M.H\"arterich's criterion. 
The corresponding constructions are
described in Section~\ref{Categories of Bott-Samelson varieties}.

The spaces $\BS_c(s,v)$ are the main technical tool of this paper, where we need only a
very special type of them. The reason why we introduce them
in such a generality is that they will be used latter to prove the tensor product decompositions for cohomologies~\cite{TPDFCOBSV}.
However, the definition of the spaces $\BS_c(s,v)$ looks natural, see~(\ref{eq:bt7:10}), and we would like
to study their structure already here. More exactly, we prove that these spaces can be represented
as twisted products (that is, iterations of fibre bundles, see Theorem~\ref{theorem:1})
of the {\it elementary factors}, which are
the spaces $\BS_c(t)$ and $\pi_c(t)^{-1}(wB)$ for some sequences of reflections $t$ and elements of the Weyl group $w$. 
This decomposition does not allow us to reconstruct $\BS_c(s,v)$
from the elementary factors in a unique way (because of the twist) but in some cases
allows us to compute its $K$-equivarinat cohomology.

One of the cases in which this computation is possible is the case when the space $\BS_c(s,v)$
has a paving by spaces homeomorphic to $\C^d$ for some $d$. We say then that this space has an {\it affine}
paving. In this case, the ordinary odd cohomologies vanish and we can apply spectral
 sequences
for the calculation of $H^\bullet_K(\BS_c(s,v),\k)$. It is still an open question whether
every space $\BS_c(s,v)$ has an affine paving. But we can guarantee this property in one special
case when the pair $(s,v)$ is of gallery type (Definition~\ref{definition:3}).
This notion can be considered as a generalization of the notion of a morphism in the categories
of Bott-Samelson varieties~\cite{CBSV}. It is still unclear why pairs of gallery type
emerge but there are probably plenty of them. For example, if the root system
of $G$ has type $A_1$ or $A_2$, then any pair is of gallery type.

The paper is organized as follows. In Section~\ref{Bott-Samelson varieties}, we
fix the notation for simisimple complex algebraic groups, their compact subgroups
and Bott-Samelson varieties. To avoid renumeration, we consider sequences
as maps defined on totaly ordered sets, see Section~\ref{Sequences and groups}.
In Section~\ref{Nested_fibre_bundles}, we define the space $\BS_c(s,v)$
for a sequence of reflections $s$ and a function $v$ from a nested structure
$R$ (Section~\ref{Nested_structures}) to the Weyl group of $G$. The function $v$ may be considered as a
set of not overlapping restrictions fixing the products over segments.
For example,
\begin{itemize}
\item if $R=\emptyset$, then $\BS_c(s,v)=\BS_c(s)$;
\item if $R=\{(\min I,\max I)\}$, where $s$ is defined on $I$,
      then $\BS_c(s,v)=\pi_c(s)^{-1}(wB)$, where $w$ is the value of $v$;
\item if $R$ consists only of diagonal pairs $(i,i)$, then $\BS_c(s,v)$ is either empty or is isomorphic to $\BS_c(t)$
      for some $t$. This result is used to give a topological prove of the main results of~\cite{CBSV}.
\end{itemize}
Of course, one can combine the above cases. We believe that nested structures
and the spaces $\BS_c(s,v)$ arising from them will be quite useful in establishing
relations between Bott-Samelson varieties. An example of such a relation
is given in Section~\ref{Example}.

Equivariant cohomology is considered in Section~\ref{Equivariant cohomology}. We have different
choices for principal bundles. To get an action of the Weyl group on equivariant cohomologies,
we consider Stiefel manifolds in Section~\ref{Stiefel manifolds}.
Thus we can prove Theorems~1 and~5 from~\cite{CBSV} by purely topological
arguments (Section~\ref{Categories of Bott-Samelson varieties}).

In Section~\ref{Approximation by compact spaces}, we explain how the $K$-equivarinat
cohomology can be computed with the help of compact Stiefel manifolds. We use such approximations
in Section~\ref{Cohomology of BS_c(s)}, where we describe a set of multiplicative generators for $H^\bullet_K(\BS_c(s),\k)$.
If $\k$ is a principal ideal domain with invertible $2$, then this description allows us to prove
in Section~\ref{Copy and concentration operators} by topological arguments
the existence of the copy and concentration operators from~\cite{BTECBSV} and
thus the existence of the corresponding basis elements. 

In this paper, we use only the sheaf cohomology. Therefore, we cite results from~\cite{Iversen} 
and prove some general facts about cohomologies, for example, Lemmas~\ref{lemma:coh:3} and~\ref{lemma:even_bundles}. We also
use some standard notation like $\csh{\k}{X}$ for the constant sheaf on a topological space $X$, $\GL(V)$
for the group of linear isomorphisms $V\to V$, $M_{m,n}(\k)$ the set of $m\times n$-matrices
with entries in $\k$ and $U(n)$ for the group of unitary $n\times n$-matrices. We also apply
spectral sequences associated with fibre bundles.
All of them are first quadrant sequences --- the fact that we tacitly keep in mind.

\section{Bott-Samelson varieties}\label{Bott-Samelson varieties}

\subsection{Compact subgroups of complex algebraic groups}\label{Compact subgroups of complex algebraic groups}
Let $G$ be a semisimple complex group with root system $\Phi$. We assume that $\Phi$ is defined with respect
to the Euclidian space $E$ with the scalar product $(\cdot,\cdot)$. This scalar product
determines a metric on $E$ and therefore a topology. We will denote
by $\overline{A}$ the closure of a subset $A\subset E$.

We consider $G$ as a Chevalley group generated by the root
elements $x_{\alpha}(t)$, where $\alpha\in\Phi$ and $t\in\C$
(see~\cite{Steinberg}). We also fix the following elements of $G$:
$$
w_\alpha(t)=x_\alpha(t)x_{-\alpha}(-t^{-1})x_\alpha(t),\quad
\omega_\alpha=w_\alpha(1),\quad
h_\alpha(t)=w_\alpha(t)\omega_\alpha^{-1}.
$$

For any $\alpha\in\Phi$, we denote by $s_\alpha$ the reflection of $E$
through the hyperplane $L_\alpha$ of the vectors perpendicular to
$\alpha$. We call $L_\alpha$ a {\it wall} perpendicular to
$\alpha$. These reflections generate the Weyl group $W$. We choose
a decomposition $\Phi=\Phi^+\sqcup\Phi^-$ into positive and
negative roots and write $\alpha>0$ (resp. $\alpha<0$) if
$\alpha\in\Phi^+$ (resp. $\alpha\in\Phi^-$). Let
$\Pi\subset\Phi^+$ be the set of simple roots corresponding to
this decomposition. We denote
$$
\mathcal T(W)=\{s_\alpha\suchthat\alpha\in\Phi\},\quad \mathcal
S(W)=\{s_\alpha\suchthat\alpha\in\Pi\}.
$$
and call these sets the {\it set of reflections} and the {\it set
of simple reflections} respectively. We use the standard notation
$\<\alpha,\beta\>=2(\alpha,\beta)/(\beta,\beta)$.

There is the analytic automorphism $\sigma$ of $G$ such that
$\sigma(x_\alpha(t))=x_{-\alpha}(-\bar t)$~\cite[Theorem 16]{Steinberg}.
Here and in what follows $\bar t$ denotes the
complex conjugate of $t$. We denote by $C=G_\sigma$ the set of
fixed points of this automorphism. We also consider the compact
torus $K=T_\sigma=B_\sigma$, where $T$ is the subgroup of $G$
generated by all $h_\alpha(t)$ and $B$ is the subgroup generated
by $T$ and all root elements $x_\alpha(t)$ with $\alpha>0$.

For $\alpha\in\Phi$, we denote by $G_\alpha$ the subgroup of $G$
generated by all root elements $x_\alpha(t)$ and $x_{-\alpha}(t)$
with $t\in\C$. There exists the homomorphism
$\phi_\alpha:\SL_2(\C)\to G$ such that
$$
\(\!
\begin{array}{cc}
1&t\\
0&1
\end{array}\!
\) \mapsto x_\alpha(t),\quad \(\!
\begin{array}{cc}
1&0\\
t&1
\end{array}\!
\) \mapsto x_{-\alpha}(t).
$$
The kernel of this homomorphism is either $\{1\}$ or $\{\pm1\}$.
Let $\sigma'$ be the automorphism of $\SL_2(\C)$ given by
$\sigma'(M)=(\overline M^T)^{-1}$. Here and in what follows, we denote by $M^T$
the transpose of $M$. The we get
$\phi_\alpha\sigma'=\sigma\phi_\alpha$. As the equation
$\sigma'(M)=-M$ does not have a solution, we get
$$
C\cap G_\alpha=\phi_\alpha(\SU_2).
$$
This fact is proved, for example, in~\cite[Lemma 45]{Steinberg}
and is true for any root $\alpha$ not necessarily simple. It
follows from this formula that $\omega_\alpha\in C\cap G_\alpha$.

Let $\mathcal N$ be the subgroup of $G$ generated by the elements
$\omega_\alpha$ and the torus $K$. Clearly $\mathcal N\subset C$.
The arguments of~\cite[Lemma~22]{Steinberg} prove that there
exists an isomorphism $\phi:W\stackrel\sim\to\mathcal N/K$ given
by $s_\alpha\mapsto \omega_\alpha K$.
We choose once and for all a representative $\dot w\in\phi(w)$ for any $w\in W$. 
Abusing notation, we denote $wK=\dot wK=\phi(w)$.


Now consider the product $C_\alpha=\phi_\alpha(\SU_2)K$. It is the
image of the compact space $\phi_\alpha(\SU_2)\times K$ under the
multiplication $G\times G\to G$. Therefore, it is compact and
closed in $G$.
It is easy to check that $C_\alpha$ is a group.
Note that $G_\alpha=G_{-\alpha}$. Therefore $C_\alpha=C_{-\alpha}$
and we can denote $C_{s_\alpha}=C_\alpha$. For any $w\in W$, we have $\dot wC_{s_\alpha}\dot w^{-1}=C_{ws_\alpha w^{-1}}=C_{w\alpha}$.

\subsection{Sequences and products}\label{Sequences and groups}
Let $I$ be a finite totally ordered set. We denote by $\le$ ($<$, $\ge$, etc.) the order on it.
We also add two additional elements $-\infty$ and $+\infty$ with
the natural properties: $-\infty<i$ and $i<+\infty$ for any $i\in
I$ and $-\infty<+\infty$. For any $i\in I$, we denote by $i+1$
(resp. $i-1$) the next (resp. the previous) element of
$I\cup\{-\infty,+\infty\}$. We write $\min I$ and $\max I$ for the
minimal and the maximal elements of $I$ respectively and denote
$I'=I\setminus\{\max I\}$ if $I\ne\emptyset$. Note that
$\min\emptyset=-\infty$ and $\max\emptyset=+\infty$. For $i,j\in
I$, we set $[i,j]=\{k\in I\suchthat i\le k\le j\}$.

Any map $s$ from $I$ to an arbitrary set is called a {\it
sequence} on $I$. We denote by $|s|$ the cardinality of $I$ and call this number the {\it length} of $s$.
For any $i\in I$, we denote by $s_i$ the value of $s$ on $i$.
Finite sequences in the usual sense are sequences on the initial intervals
of the natural numbers, that is, on the sets $\{1,2,\ldots,n\}$.
For any sequence $s$ on a nonempty set $I$, we denote by $s'=s|_{I'}$ its truncation.

We often use the following notation for Cartesian products:
$$
\prod_{i=1}^nX_i=X_1\times X_2\times\cdots\times X_n
$$
and denote by $p_{i_1i_2\cdots i_m}$ the projection of this product to
$X_{i_1}\times X_{i_2}\times\cdots\times X_{i_m}$. If we have maps $f_i:Y\to X_i$ for all
$i=1,\ldots,n$, then we denote by $f_1\boxtimes\cdots\boxtimes f_n$ the map that takes
$y\in Y$ to the $n$-tuple $(f_1(y),\ldots,f_n(y))$. To avoid too many superscripts, we denote
$$
X(I)=\prod_{i\in I}X.
$$
We consider all such products with respect to the product topology if all factors are topological spaces.
Suppose that $X$ is a group. Then $X(I)$ is a topological group with respect to the componentwise
multiplication. In that case, for any $i\in I$ and $x\in X$, we
consider the {\it indicator sequence} $\delta_i(x)\in X(I)$ defined by
$$
\delta_i(x)_j=\left\{
\begin{array}{ll}
x&\text{ if }j=i;\\
1&\text{ otherwise}
\end{array}
\right.
$$

For any $i\in I\cup\{-\infty\}$ and a sequence $\gamma$ on $I$, we use the notation
$\gamma^i=\gamma_{\min I}\gamma_{\min I+1}\cdots\gamma_i$.
Obviously $\gamma^{-\infty}=1$. We set
$\gamma^{\max}=\gamma^{\max I}$.

We will also use the following notation. Let $G$ be a group. If $G$ acts on the set $X$, then we denote by $X/G$
the set of $G$-orbits of elements of $X$. Suppose that $\phi:X\to Y$ is a $G$-equivariant map.
Then $\phi$ maps any $G$-orbit to a $G$-orbit. We denote the corresponding map from $X/G$ to $Y/G$
by $\phi/G$.

\subsection{Galleries}\label{Galleries}
As $L_{-\alpha}=L_\alpha$ for any root $\alpha\in\Phi$, we can denote $L_{s_\alpha}=L_\alpha$.
For any $w\in W$, we get
$wL_{s_\alpha}=wL_\alpha=L_{w\alpha}=L_{s_{w\alpha}}=L_{ws_\alpha
w^{-1}}$.

A {\it chamber} is a connected component of the space
$E\setminus\bigcup_{\alpha\in\Phi}L_\alpha$. We denote by
$\mathbf{Ch}$ the set of chambers and by $\mathbf L$ the set of
walls $L_{s_\alpha}$. We say that a chamber $\Delta$ is {\it
attached} to a wall $L$ if the intersection $\overline\Delta\cap L$ has dimension $\dim E-1$.
The {\it fundamental} chamber is defined by
$$
\Delta_+=\{v\in E\suchthat (v,\alpha)>0\text{ for any
}\alpha\in\Pi\}.
$$
A {\it labelled gallery} on a finite totaly ordered set $I$ is a
pair of maps $(\Delta,\mathcal L)$, where
$\Delta:I\cup\{-\infty\}\to\mathbf{Ch}$ and $\mathcal
L:I\to\mathbf L$ are such that for any $i\in I$,
both chambers $\Delta_{i-1}$ and $\Delta_i$ are attached to
$\mathcal L_i$. The Weyl group $W$ acts on the set of labelled
galleries by the rule $w(\Delta,\mathcal
L)=(w\Delta,w\mathcal L)$, where $(w\Delta)_i=w\Delta_i$ and
$(w\mathcal L)_i=w\mathcal L_i$.

For a sequence $s:I\to\mathcal T(W)$, we define the set of {\it
generalized combinatorial galleries}
$$
\Gamma(s)=\{\gamma:I\to W\suchthat\gamma_i=s_i\text{ or
}\gamma_i=1\text{ for any }i\in I\}.
$$
If $s$ maps $I$ to $\mathcal S(W)$, then the elements of
$\Gamma(s)$ are called {\it combinatorial galleries}.
To any combinatorial galley $\gamma\in\Gamma(s)$ there corresponds
the following labelled gallery:
\begin{equation}\label{eq:5}
i\mapsto \gamma^i\Delta_+,\qquad i\mapsto
L_{\gamma^is_i(\gamma^i)^{-1}}.
\end{equation}
%
Moreover, any labelled gallery $(\Delta,\mathcal L)$ on $I$ such
that $\Delta_{-\infty}=\Delta_+$ can be obtained this way (for the
corresponding $s$ and $\gamma$). If we reflect all chambers $\Delta_j$ and
hyperplanes $\mathcal L_j$ for $j>i$ through the hyperplane $\mathcal L_i$, where $i\in I$,
then we will get a labelled gallery again. This new gallery begins at the same chamber
as the old one. We use the following notation
for this operation on the level of combinatorial galleries: for any $\gamma\in\Gamma(s)$
and index $i\in I$, we denote by $\f_i\gamma$ the combinatorial gallery of $\Gamma(s)$
such that $(\f_i\gamma)_i=\gamma_is_i$ and $(\f_i\gamma)_j=\gamma_j$ for $j\ne i$.

Let $\gamma\in\Gamma(s)$, where $s:I\to\mathcal T(W)$.
%
We consider the sequence $s^{(\gamma)}:I\to\mathcal T(W)$ defined
by
$$
s^{(\gamma)}_i=\gamma^is_i(\gamma^i)^{-1}.
$$
Moreover for any $w\in W$, we define the
sequence $s^w:I\to\mathcal T(W)$ by $(s^w)_i=ws_iw^{-1}$ and the generalized combinatorial gallery
$\gamma^w\in\Gamma(s^w)$ by $\gamma_i^w=w\gamma_iw^{-1}$.

\begin{definition}\label{definition:1}
A sequence $s:I\to\mathcal T(W)$ is of galley type if there exists
a labelled gallery $(\Delta,\mathcal L)$ on $I$ such that
$\mathcal L_i=L_{s_i}$ for any $i\in I$.
\end{definition}

In this case, there exist an element $x\in W$, a sequence of simple
reflections $t:I\to\mathcal S(W)$ and a combinatorial gallery
$\gamma\in\Gamma(t)$ and  such that $x\Delta_{-\infty}=\Delta_+$
and $x(\Delta,\mathcal L)$ corresponds to $\gamma$
by~(\ref{eq:5}). We call $(x,t,\gamma)$ a {\it gallerification} of $s$.
This fact is equivalent to $t^{(\gamma)}=s^x$.

Note that for any $w\in W$ a sequence $s$ is of gallery type if and only if the sequence $s^w$
is of gallery type.

\begin{example}\rm
If $\Phi$ is of type either $A_1$ or $A_2$, then every sequence
$s:I\to\mathcal T(W)$ is of gallery type. In the first case, this
fact is obvious, as there is only one wall.

Let us consider the second case.
We will prove by induction on the cardinality of $I\ne\emptyset$
that for any sequence $s:I\to\mathcal T(W)$ and a chamber
$\Delta_{+\infty}$ attached to $L_{s_{\max I}}$, there exists a
labelled gallery $(\Delta,\mathcal L)$ such that $\mathcal
L_i=L_{s_i}$ for any $i\in I$ and $\Delta_{\max
I}=\Delta_{+\infty}$. The case $|I|=1$ is obvious: we can take,
for example, the labelled gallery $(\Delta,\mathcal L)$ on $I$,
where $\Delta_{-\infty}=\Delta_{\max I}=\Delta_{+\infty}$ and
$\mathcal L_{\max I}=L_{s_{\max I}}$. Now suppose that $|I|>1$ and
the claim is true for smaller sets. If $s_{\max I'}=s_{\max I}$, then we define
$\widetilde\Delta_{+\infty}=\Delta_{+\infty}$. Otherwise let
$\widetilde\Delta_{+\infty}$ be a chamber equal to
$\Delta_{+\infty}$ or obtained from $\Delta_{+\infty}$ by the
reflection through $L_{s_{\max I}}$ such that
$\widetilde\Delta_{+\infty}$ is attached to both $L_{s_{\max I}}$
and $L_{s_{\max I'}}$. The picture below shows how to do it.

\vspace{52pt}

$$
\begin{picture}(0,0)
\put(-80,0){\line(1,0){160}} \put(0,0){\line(2,3){40}}
\put(0,0){\line(-2,3){40}} \put(0,0){\line(2,-3){40}}
\put(0,0){\line(-2,-3){40}} \put(43,-60){{\tiny$L_{s_{\max I}}$}}
\put(-73,-60){{\tiny $L_{s_{\max I'}}$}} \put(-25,55){{\tiny
$\Delta_{+\infty}=\widetilde\Delta_{+\infty}$}}
\put(-25,-60){{\tiny
$\Delta_{+\infty}=\widetilde\Delta_{+\infty}$}}
\put(-10,35){{\tiny $\widetilde\Delta_{+\infty}$}}
\put(-38,14){{\tiny $\Delta_{+\infty}$}} \put(-8,-40){{\tiny
$\widetilde\Delta_{+\infty}$}} \put(-28,20){\vector(1,1){15}}
\put(26,-17){{\tiny $\Delta_{+\infty}$}}
\put(24,-18){\vector(-1,-1){15}}
\end{picture}
$$

\vspace{70pt}

By the inductive hypothesis, there exists a gallery
$(\Delta,\mathcal L)$ on $I'$ such that $\mathcal L_i=L_{s_i}$ for
any $i\in I'$ and $\Delta_{\max I'}=\widetilde\Delta_{+\infty}$.
It suffices to extend this gallery to $I$ by defining
$\Delta_{\max I}=\Delta_{+\infty}$ and $\mathcal L_{\max
I}=L_{s_{\max I}}$.
\end{example}

\subsection{Definition via the Borel subgroup} For any simple reflection $t\in\mathcal S(W)$,
we consider the minimal parabolic subgroup $P_t=B\cup BtB$. Note
that $C_t$ is a maximal compact subgroups of $P_t$ and
$C_t=P_t\cap C$. Let $s:I\to\mathcal S(W)$ be a sequence of simple
reflections. We consider the space
$$
P(s)=\prod_{i\in I}P_{s_i}
$$
with respect to the product topology. The group $B(I)$ acts on
$P(s)$ on the right by $(pb)_i=b_{i-1}^{-1}p_ib_i$. Here and in what follows, we assume $b_{-\infty}=1$. Let
$$
\BS(s)=P(s)/B(I).
$$
This space is compact and Hausdorff. The Borel subgroup $B$ acts continuously on $P(s)$ by
$$
(bp)_i=\left\{
\begin{array}{ll}
bp_i&\text{ if }i=\min I;\\
p_i&\text{ otherwise.}
\end{array}
\right.
$$
As this action commutes with the right action of $B(I)$ on $P(s)$
described above, we get the left action of $B$ on $\BS(s)$. Note that
$\BS(s)$ is a singleton with the trivial action of $B$ if
$I=\emptyset$.

We also have the map $\pi(s):\BS(s)\to G/B$ that maps $pB(I)$ to $p^{\max}B$.
This map is invariant under the left action of $B$. However, we
need here only the action of the torus $T$. We have
$\BS(s)^T=\{\gamma B(I)\suchthat\gamma\in\Gamma(s)\}$, where
$\gamma$ is identified with the sequence $i\mapsto\dot\gamma_i$ of
$P(s)$. We will identify $\BS(s)^T$ with $\Gamma(s)$. For any
$w\in\Gamma(s)$, we set $\BS(s,w)=\pi(s)^{-1}(wB)$.

\subsection{Definition via the compact torus}\label{Definition via the compact torus}

Let $s:I\to\mathcal T(W)$ be a sequence of reflections. We
consider the space
$$
C(s)=\prod_{i\in I}C_{s_i}
$$
with respect to the product topology. The group $K(I)$ acts on
$C(s)$ on the right by $(ck)_i=k_{i-1}^{-1}c_ik_i$. Here and in what follows, we assume $k_{-\infty}=1$. Let
$$
\BS_c(s)=C(s)/K(I)
$$
and $\nu_c(s):C(s)\to\BS_c(s)$ be the corresponding quotient map.
We denote by $[c]$ the right orbit $cK(I)=\nu_c(s)(c)$ of $c\in C(s)$. 
The space $\BS_c(s)$ is compact and Hausdorff (as the quotient of
a compact Hausdorff space by a continuous action of a compact
group).

The action of $K(I)$ on $C(s)$ described above commutes with the
following left action of $K$:
$$
(kc)_i=\left\{
\begin{array}{ll}
kc_i&\text{ if }i=\min I;\\
c_i&\text{ otherwise.}
\end{array}
\right.
$$
Therefore $K$ acts continuously on $\BS_c(s)$ on the left: $k[c]=[kc]$.

We also have the map $\pi_c(s):\BS_c(s)\to C/K$ that maps
$[c]$ to $c^{\max}K$. This map is obviously invariant under the left action of $K$. We
have $\BS_c(s)^K=\{[\gamma]\suchthat\gamma\in\Gamma(s)\}$, where
$\gamma$ is identified with the sequence $i\mapsto\dot\gamma_i$ of
$C(s)$. We also identify $\BS_c(s)^K$ with $\Gamma(s)$. For any
$w\in W$, we set $\BS_c(s,w)=\pi_c(s)^{-1}(wK)$.
The set of $K$-fixed points of this space is
$$
\BS_c(s,w)^K=\{\gamma\in\Gamma(s)\suchthat\gamma^{\max}=w\}=\Gamma(s,w).
$$

For any $w\in W$, let $l_w:C/K\to C/K$ and $r_w:C/K\to C/K$ be the
maps given by $l_w(cK)=\dot w cK$ and $r_w(cK)=c\dot wK$. They are
obviously well-defined and $l_w^{-1}=l_{w^{-1}}$,
$r_w^{-1}=r_{w^{-1}}$. Moreover, $l_w$ and $r_{w'}$ commute for
any $w,w'\in W$ .
%
%
Let $d_w:\BS_c(s)\to\BS_c(s^w)$ be the map given by
$d_w([c])=[c^w]$, where $(c^w)_i=\dot wc_i\dot w^{-1}$. This map
is well-defined and a homeomorphism. Moreover, $d_w(ka)=\dot
wk\dot w^{-1}a$ for any $k\in K$ and $a\in\BS_c(s)$. We have the
commutative diagram
$$
\begin{tikzcd}[column sep=15ex]
\BS_c(s)\arrow{r}{d_w}[swap]{\sim}\arrow{d}[swap]{\pi_c(s)}&\BS_c(s^w)\arrow{d}{\pi_c(s^w)}\\
C/K\arrow{r}{l_wr_w^{-1}}[swap]{\sim}&C/K
\end{tikzcd}
$$
Hence for any $x\in W$, the map $d_w$ yields the isomorphism
\begin{equation}\label{eq:6}
\BS_c(s,x)\cong\BS(s^w,wxw^{-1}).
\end{equation}

There is a similar construction for a generalized combinatorial gallery
$\gamma\in\Gamma(s)$.
We define the map $D_\gamma:\BS_c(s)\to\BS_c(s^{(\gamma)})$ by
$$
D_\gamma([c])=[c^{(\gamma)}],\text{ where
}c^{(\gamma)}_i=(\dot\gamma_{i-1}\dot\gamma_{i-2}\cdots\dot\gamma_{\min
I})^{-1}c_i\dot\gamma_i\dot\gamma_{i-1}\cdots\dot\gamma_{\min I}.
$$
This map is well-defined. Indeed let $[c]=[\tilde c]$ for some
$c,\tilde c\in C(s)$. Then $\tilde c=ck$ for some $k\in K(I)$.
Then $\tilde c^{(\gamma)}=c^{(\gamma)}k'$, where
$k'_i=(\dot\gamma_i\dot\gamma_{i-1}\cdots\dot\gamma_{\min I})^{-1}k_i\dot\gamma_i\dot\gamma_{i-1}\cdots\dot\gamma_{\min I}$.
The map $D_\gamma$ is obviously continuous and a $K$-equivariant
homeomorphism. We have the following commutative diagram:
$$
\begin{tikzcd}[column sep=15ex]
\BS_c(s)\arrow{r}{D_\gamma}[swap]{\sim}\arrow{d}[swap]{\pi_c(s)}&\BS_c(s^{(\gamma)})\arrow{d}{\pi_c(s^{(\gamma)})}\\
C/K\arrow{r}{r_{\gamma^{\max I}}^{-1}}[swap]{\sim}&C/K
\end{tikzcd}
$$
Hence for any $x\in W$, the map $D_\gamma$ yields the isomorphism
\begin{equation}\label{eq:7}
\BS_c(s,x)\cong\BS_c(s^{(\gamma)},x(\gamma^{\max})^{-1}).
\end{equation}

\subsection{Isomorphism of two constructions}\label{Isomorphism_of_two_constructions}
Consider the Iwasawa decomposition $G=CB$
(see, for example,~\cite[Theorem 16]{Steinberg}). Hence for any
simple reflection $t$, the natural embedding $C_t\subset P_t$
induces the isomorphism of the left cosets $C_t/K\stackrel\sim\to P_t/B$. 
Therefore, for any sequence of simple reflections $s:I\to\mathcal
S(W)$, the natural embedding $C(s)\subset P(s)$ induces the
homeomorphism $\BS_c(s)\stackrel\sim\to\BS(s)$. This homeomorphism
is clearly $K$-equivariant. Moreover, the following diagram is
commutative:
$$
\begin{tikzcd}
\BS_c(s)\arrow{r}{\sim}\arrow{d}&\BS(s)\arrow{d}\\
C/K\arrow{r}{\sim}&G/B
\end{tikzcd}
$$
Note that $\BS_c(s)^K$ is mapped to $\BS(s)^K=\BS(s)^T$. These
sets are identified with $\Gamma(s)$.
%

Any isomorphism of finite totally ordered set $\iota:J\to I$
induces the obvious homeomorphisms
$\BS(s)\stackrel\sim\to\BS(s\iota)$ and
$\BS_c(s)\stackrel\sim\to\BS_c(s\iota)$. They obviously respect
the isomorphisms described above.

\section{Nested fibre bundles}\label{Nested_fibre_bundles}

\subsection{Nested structures}\label{Nested_structures} Let $I$ be a finite totally ordered set.
We denote the order on it by $\le$ (respectively, $\ge$, $<$, etc.).
Let $R$ be a subset of $I^2$. For any $r\in R$, we denote by $r_1$
and $r_2$ the first and the second component of $r$ respectively.
So we have $r=(r_1,r_2)$. We will also use the notation
$[r]=[r_1,r_2]$ for the intervals. We say that $R$ is a {\it
nested structure} on $I$ if
\smallskip
\begin{itemize}
\item $r_1\le r_2$ for any $r\in R$;\\[-8pt]
\item $\{r_1,r_2\}\cap\{r'_1,r'_2\}=\emptyset$ for any $r,r'\in R$;\\[-8pt]
\item for any $r,r'\in R$, the intervals $[r]$ and $[r']$ are
either disjoint or
      one of them is contained in the other.
\end{itemize}
\smallskip
Abusing notation, we will write
\begin{itemize}
\item $r\subset r'$ (resp. $r\subsetneq r'$) to say that $[r]\subset[r']$ (resp. $[r]\subsetneq[r']$);\\[-8pt]
\item $r<r'$ to say that $r_2<r'_1$;
\end{itemize}

Let $s:I\to\mathcal T(W)$ and $v:R\to W$ be arbitrary maps. We
define
\begin{equation}\label{eq:bt7:10}
\BS_c(s,v)=\{[c]\in\BS_c(s)\suchthat\forall r\in R:
c_{r_1}c_{r_1+1}\cdots c_{r_2}\in v_rK\}.
\end{equation}
We consider
this set with respect to the subspace topology induced by its
embedding into $\BS_c(s)$.

Consider the maps $\xi_r:C(s)\to C/K$ defined by
$\xi_r(c)=c_{r_1}c_{r_1+1}\cdots c_{r_2}K$. Then $\BS_c(s,v)$ is
the image of the intersections
$$
C(s,v)=\bigcap_{r\in R}\xi_r^{-1}(v_rK)
$$
under the projection $\nu_c(s):C(s)\to\BS_c(s)$. As $C(s,v)$ is
compact, the space $\BS_c(s,v)$ is also compact and thus is closed
in a Hausdorff space $\BS_c(s)$.

Let $\nu_c(s,v)$ denote the restriction of $\nu_c(s)$ to $C(s,v)$.
We have a Cartesian diagram
$$
\begin{tikzcd}[column sep=15ex]
C(s,v)\arrow{r}{\nu_c(s,v)}\arrow[hook]{d}&\BS_c(s,v)\arrow[hook]{d}\\
C(s)\arrow{r}{\nu_c(s)}&\BS_c(s)
\end{tikzcd}
$$
Clearly the subspace topology on $\BS_c(s,v)$ coincides
with the quotient topology induced by the upper arrow. As
$\nu_c(s)$ is a fibre bundle, $\nu_c(s,v)$ is also a fibre bundle
as a pull-back. The fibres of both bundles are $K(I)$.

\subsection{Projection} Let $F\subset R$ be a nonempty subset such that the intervals $[f]$, where $f\in F$,
are pairwise disjoint. In what follows, we use the notation
$$
F=\{f^1,\ldots,f^\n\}
$$
with elements written in the increasing order:
$f^1<f^2<\cdots<f^\n$.
We set
$$
I^F=I\setminus\bigcup_{f\in F}[f],\quad R^F=R\setminus\{r\in R\suchthat\exists f\in F: r\subset f\}.
$$
Obviously, $R^F$ is a nested structure on $I^F$.

We also introduce the following auxiliary notation. Let $i\in I^F$.
We choose $m=0,\ldots,\n$ so that $f^m_2<i<f^{m+1}_1$.
In these inequalities and in what follows, we assume that $f^0_2=-\infty$ and
$f^{\n+1}_1=+\infty$. We set
$$
v^i=v_{f^1}v_{f^2}\cdots v_{f^m},\quad
\dot v^i=\dot v_{f^1}\dot v_{f^2}\cdots\dot v_{f^m}.
$$
Clearly, $v^i$ is image of $\dot v^i$ under the quotient
homomorphism $\mathcal N\to W$.
We define the sequences $s^F:I^F\to\mathcal T(W)$ and $v^F:R^F\to W$ by
$$
s^F_i=v^is_i(v^i)^{-1},\quad v^F_r=v^{r_1}v_r(v^{r_2})^{-1}.
$$

The main aim of this section is to define the map $p^F:\BS_c(s,v)\to\BS_c(s^F,v^F)$,
which we call the {\it projection along $F$}.

\begin{definition}
A sequence $c\in C(s,v)$ is called $F$-balanced if
\begin{equation}\label{eq:coh:2.5}
c_{f_1}c_{f_1+1}\cdots c_{f_2}=\dot v_f
\end{equation}
for any $f\in F$.
\end{definition}

It is easy to prove that for any $a\in\BS_c(s,v)$, there exists an $F$-balanced $c\in C(s,v)$
such that $a=[c]$ (see the third part of the proof of Lemma~\ref{lemma:3}).
For an $F$-balanced $c\in C(s,v)$, we define the sequence $c^F\in C(s^F)$ by
$$
c^F_i=\dot v^ic_i(\dot v^i)^{-1}.
$$
Then we set $p^F([c])=[c^F]$.

\begin{lemma}\label{lemma:3}
The map $p^F:\BS_c(s,v)\to\BS_c(s^F,v^F)$ is well-defined, $K$-equivariant and continuous.
\end{lemma}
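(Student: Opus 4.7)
The plan is to carry out four steps in order: (a) produce an $F$-balanced representative in every orbit; (b) verify that $c^F\in C(s^F,v^F)$ whenever $c$ is $F$-balanced; (c) show $[c^F]$ is independent of the $F$-balanced representative; (d) deduce continuity and $K$-equivariance. The combinatorial fact driving everything is that the intervals $[f]$, $f\in F$, are pairwise disjoint, so the $F$-balance equations decouple and can be solved one $f$ at a time.

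For (a) I order $F=\{f^1<\cdots<f^\n\}$ and construct $k^{\mathrm{adj}}(c)\in K(I)$ supported on $\{f^i_2\}$ by solving, inductively in $i$, the equation
$$
\bigl(k^{\mathrm{adj}}\bigr)_{f^i_1-1}^{-1}\cdot\bigl(c_{f^i_1}\cdots c_{f^i_2}\bigr)\cdot k^{\mathrm{adj}}_{f^i_2}=\dot v_{f^i}
$$
for $k^{\mathrm{adj}}_{f^i_2}\in K$. This is solvable because $c\in C(s,v)$ gives $c_{f^i_1}\cdots c_{f^i_2}\in\dot v_{f^i}K$ and $\mathcal N$ normalises $K$, and the left-hand factor is either $1$ or a previously chosen $k^{\mathrm{adj}}_{f^j_2}$ with $j<i$. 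The construction is continuous in $c$ and $c\cdot k^{\mathrm{adj}}(c)$ is $F$-balanced. For (b), $c^F_i\in C_{s^F_i}$ is immediate from the rule $\dot w C_{s_\alpha}\dot w^{-1}=C_{w\alpha}$ recorded at the end of Section~\ref{Compact subgroups of complex algebraic groups}. For $r\in R^F$, two consecutive indices $i<i'$ in $[r_1,r_2]\cap I^F$ are either $I$-adjacent (whence $\dot v^{i'}=\dot v^i$) or separated by some $[f]\in F$ (whence $\dot v^{i'}=\dot v^i\dot v_f$ by construction, and $F$-balance inserts $c_{f_1}\cdots c_{f_2}=\dot v_f$ to close the gap); telescoping gives
$$
\prod_{i\in I^F\cap[r_1,r_2]}c^F_i=\dot v^{r_1}\bigl(c_{r_1}\cdots c_{r_2}\bigr)(\dot v^{r_2})^{-1}\in\dot v^{r_1}\dot v_rK(\dot v^{r_2})^{-1}=v^F_rK,
$$
using normality of $K$ in $\mathcal N$.

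For (c), suppose $\tilde c=ck$ is another $F$-balanced representative. Equating products over each $[f]$ gives the constraint $k_{f_1-1}=\dot v_f k_{f_2}\dot v_f^{-1}$. Setting $k^F_i:=\dot v^i k_i(\dot v^i)^{-1}\in K$ for $i\in I^F$, a direct computation, split according to whether the $I$-predecessor of $i$ lies in $I^F$ or inside some $[f]\in F$ (and using the above relation precisely in the latter ``gap'' case), shows $\tilde c^F=c^F\cdot k^F$, so $[\tilde c^F]=[c^F]$. For (d), the composite
$$
C(s,v)\xrightarrow{\,c\mapsto c\cdot k^{\mathrm{adj}}(c)\,}C(s,v)\xrightarrow{\,c\mapsto c^F\,}C(s^F)\xrightarrow{\,\nu_c(s^F)\,}\BS_c(s^F,v^F)
$$
is continuous, factors through the quotient $\nu_c(s,v)$, and equals $p^F\circ\nu_c(s,v)$, so $p^F$ is continuous. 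For $K$-equivariance, if $\min I\in I^F$ then $\dot v^{\min I^F}=1$, $kc$ remains $F$-balanced (since $k$ alters only $c_{\min I}\notin\bigcup[f]$), and $(kc)^F=k\cdot c^F$ immediately; if $\min I=f_1$ for some $f\in F$, one computes $k^{\mathrm{adj}}(kc)_{f_2}=\dot v_f^{-1}k^{-1}\dot v_f$, and the outer conjugation by $\dot v_f=\dot v^{\min I^F}$ inside $c^F_{\min I^F}=\dot v_f c_{f_2+1}\dot v_f^{-1}$ recovers a factor of $k$ at position $\min I^F$, giving $p^F(k\cdot[c])=k\cdot p^F([c])$.

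The principal obstacle will be the bookkeeping in step (c), namely translating a right $K(I)$-action on $F$-balanced sequences into a right $K(I^F)$-action after applying $(-)^F$; the identity $k_{f_1-1}=\dot v_f k_{f_2}\dot v_f^{-1}$ extracted from double $F$-balance is what rescues the gap case. The other steps are largely patient verification using the normality of $K$ in $\mathcal N$ and the conjugation rule for $C_\alpha$.
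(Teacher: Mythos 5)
Your proposal is correct and follows essentially the same route as the paper's proof: an inductive right adjustment supported on the positions $f^i_2$ to balance representatives, the telescoping computation for $r\in R^F$ using normality of $K$ in $\mathcal N$, the relation $k_{f_1-1}=\dot v_f k_{f_2}\dot v_f^{-1}$ to compare two balanced representatives, the case split on whether $\min I\in I^F$ for equivariance, and continuity by factoring a continuous map on $C(s,v)$ through the quotient. The one point to watch in a full write-up is that when several intervals $[f^l],\ldots,[f^m]$ are mutually adjacent (no element of $I^F$ between them), the relation $k_{f_1-1}=\dot v_fk_{f_2}\dot v_f^{-1}$ must be chained through all of them --- in step (c) to express $k_{f^m_2}$ in terms of $k_j$ for $j$ the $I^F$-predecessor of $i$ (the paper's displayed induction), and in step (d) where the adjustment of $kc$ propagates through every leading interval so that $\dot v^{\min I^F}=\dot v_{f^1}\cdots\dot v_{f^m}$ rather than a single $\dot v_f$ as your formulas suggest.
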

\begin{proof}
{\it Part 1: $p^F$ is well-defined.}
First, we prove that $c^F\in C(s^F,v^F)$ for an $F$-balanced $c$.
Let $r\in R^F$ and $l$ and $m$ be the numbers such that
$$
f^l_2<r_1<f^{l+1}_1, \quad f^m_2<r_2<f^{m+1}_1.
$$
We get
\begin{multline*}
(c^F_{r_1}\cdots c^F_{f^{l+1}_1-1})(c^F_{f^{l+1}_2+1}\cdots c^F_{f^{l+2}_1-1})\cdots
(c^F_{f^m_2+1}\cdots c^F_{r_2})\\
\shoveleft{
=\dot v_{f^1}\dot v_{f^2}\cdots\dot v_{f^l}c_{r_1}\cdots c_{f^{l+1}_1-1}(\dot v_{f^1}\dot v_{f^2}\cdots\dot v_{f^l})^{-1}\times}\\
\times\dot v_{f^1}\dot v_{f^2}\cdots\dot v_{f^{l+1}}
c_{f^{l+1}_2+1}\cdots c_{f^{l+2}_1-1}
(\dot v_{f^1}\dot v_{f^2}\cdots\dot v_{f^{l+1}})^{-1}\times\\
\shoveright{\cdots\times\dot v_{f^1}\dot v_{f^2}\cdots\dot v_{f^m}c_{f^m_2+1}\cdots c_{r_2}(\dot v_{f^1}\dot v_{f^2}\cdots\dot v_{f^m})^{-1}}\\
\shoveleft{=\dot v_{f^1}\dot v_{f^2}\cdots\dot v_{f^l}c_{r_1}\cdots c_{f^{l+1}_1-1}\dot v_{f^{l+1}}c_{f^{l+1}_2+1}\cdots c_{f^{l+2}_1-1}\dot v_{f^{l+2}}\times}\\
\shoveright{\cdots\times\dot v_{f^m}c_{f^m_2+1}\cdots c_{r_2}(\dot v_{f^1}\dot v_{f^2}\cdots\dot v_{f^m})^{-1}}\\
\shoveleft{=\dot v_{f^1}\dot v_{f^2}\cdots\dot v_{f^l}c_{r_1}\cdots c_{r_2}(\dot v_{f^1}\dot v_{f^2}\cdots\dot v_{f^m})^{-1}
=\dot v^{r_1}\dot v_{r}(\dot v^{r_2})^{-1}K\in v^F_rK.}\\
\end{multline*}

Now let us prove that $[c^F]$ is independent of the choice of an $F$-balanced $c$.
Suppose that $[d]=[c]$ for another $F$-balanced sequence $d$.
We get $d=ck$ for some $k\in K(I)$.
Comparing~(\ref{eq:coh:2.5}) and~ the similar equalities for $d$, we get
\begin{equation}\label{eq:coh:7}
\dot v_f k_{f_2}=k_{f_1-1}\dot v_f
\end{equation}
for any $f\in F$. As usually, we assume that $k_{-\infty}=1$.

We define $k^F\in K(I^F)$ by $k^F_i=\dot v^i k_i(\dot v^i)^{-1}$.
We claim that $c^Fk^F=d^F$ and thus $[c^F]=[d^F]$. Indeed, let $i\in I^F$. We choose $m$ so that $f^m_2<i<f^{m+1}_1$.
If $f^m_2<i-1$, then $\dot v^i=\dot v^{i-1}$ and we get
\begin{multline*}
(c^Fk^F)_i=(k^F_{i-1})^{-1}c^F_ik^F_i=(\dot v^ik_{i-1}(\dot v^i)^{-1})^{-1}\dot v^ic_i(\dot v^i)^{-1}\dot v^ik_i(\dot v^i)^{-1}\\
=\dot v^ik_{i-1}^{-1}c_ik_i(\dot v^i)^{-1}=\dot v^id_i(\dot v^i)^{-1}=d^F_i.
\end{multline*}

Now suppose that $i-1=f^m_2$. Let $l=1,\ldots,m$ be the greatest number such that $f^l_1>f^{l-1}_2+1$.
This number is well-defined, as we assumed that $f^0_2=-\infty$. First consider the case $l=1$.
In this case, $i$ is the minimal element of $I^F$. We claim that $k_{f^h_2}=1$ for any $h=0,\ldots,m$.
The case $h=0$ is true by definition. Now suppose that $h<m$ and $k_{f^h_2}=1$. 
By~(\ref{eq:coh:7}), we get
$$
\dot v_{f^{h+1}} k_{f^{h+1}_2}=k_{f^{h+1}_1-1}\dot v_{f^{h+1}}=k_{f^h_2}\dot v_{f^{h+1}}=\dot v_{f^{h+1}}.
$$
Cancelling out $\dot v_{f^{h+1}}$, we get $k_{f^{h+1}_2}=1$. We have proved that $k_{i-1}=k_{f^m_2}=1$.
We get
$$
(c^Fk^F)_i=c^F_ik^F_i=\dot v^ic_i(\dot v^i)^{-1}\dot v^ik_i(\dot v^i)^{-1}=
\dot v^ic_ik_i(\dot v^i)^{-1}=\dot v^ik_{i-1}c_ik_i(\dot v^i)^{-1}=\dot v_id_i\dot v_i^{-1}=d^F_i.
$$

Now consider the case $l>1$. In this case, $j=f^l_1-1$ immediately precedes $i$ in the set $I^F$.
We claim that
\begin{equation}\label{eq:bt6:1}
(\dot v_{f^l}\cdots\dot v_{f^h})^{-1}k_j\dot v_{f^l}\cdots \dot v_{f^h}=k_{f^h_2}
\end{equation}
for any $h=l,\ldots,m$. For $h=l$, this equality directly follows from~(\ref{eq:coh:7}).
Now suppose that $h<m$ and~(\ref{eq:bt6:1}) holds. Conjugating it by $\dot v_{f^{h+1}}^{-1}$,
we get by~(\ref{eq:coh:7}) that
$$
(\dot v_{f^l}\cdots\dot v_{f^{h+1}})^{-1}k_j\dot v_{f^l}\cdots \dot v_{f^{h+1}}
=\dot v_{f^{h+1}}^{-1}k_{f^h_2}\dot v_{f^{h+1}}=\dot v_{f^{h+1}}^{-1}k_{f^{h+1}_1-1}\dot v_{f^{h+1}}
=k_{f^{h+1}_2}.
$$

Applying~(\ref{eq:bt6:1}) for $h=m$, we get
\begin{multline*}
(c^F k^F)_i=\(k^F_j\)^{-1}c^F_ik^F_i
=(\dot v^jk_j(\dot v^j)^{-1})^{-1}\dot v^ic_i(\dot v^i)^{-1}\dot v^ik_i(\dot v^i)^{-1}\\
=\dot v^i((\dot v^j)^{-1}\dot v^i)^{-1}k_j^{-1}((\dot v^j)^{-1}\dot v^i)c_ik_i(\dot v^i)^{-1}
=\dot v^i k_{i-1}^{-1}c_ik_i(\dot v^i)^{-1}=d^F_i.
\end{multline*}

{\it Part 2: $p^F$ is $K$-equivariant.} Let $c\in C(s,v)$ be $F$-balanced.
If $\min I$ belongs to $I^F$, then $kc$ is also $F$-balanced and $(kc)^F=kc^F$ for any $k\in K$.
Thus $p^F(k[c])=[(kc)^F]=k[c^F]=kp^F([c])$ for any $k\in K$.

So we consider the case $\min I\notin I^F$ that is $\min I=f_1^1$.
We can also assume that $I^F\ne\emptyset$, as otherwise $\BS_c(s^F,v^F)$
is a singleton.

Let $m=1,\ldots,\n$ be the smallest number such that $f^m_2+1\in I^F$. This element is obviously the
minimal element of $I^F$. Let
\begin{multline*}
d=k\,c\,\delta_{f^1_2}(\dot v_{f^1}^{-1}k^{-1}\dot v_{f^1})\delta_{f^2_2}((\dot v_{f^1}\dot v_{f^2})^{-1}k^{-1}\dot v_{f^1}\dot v_{f^2})\times\\
\cdots\times\delta_{f^2_m}((\dot v_{f^1}\cdots \dot v_{f^m})^{-1}k^{-1}\dot v_{f^1}\cdots\dot v_{f^m}).
\end{multline*}
Here we multiplied on the right by the indicator sequences on $I$ defined in Section~\ref{Sequences and groups}.
It is easy to check that $d$ is $F$-balanced. Indeed, for any $h=1,\ldots,m$, we get
\begin{multline*}
d_{f^h_1}d_{f^h_1+1}\cdots d_{f^h_2}\\
=
(\dot v_{f^1}\cdots \dot v_{f^{h-1}})^{-1}k\dot v_{f^1}\cdots\dot v_{f^{h-1}}
c_{f^h_1}c_{f^h_1+1}\cdots c_{f^h_2}
(\dot v_{f^1}\cdots \dot v_{f^h})^{-1}k^{-1}\dot v_{f^1}\cdots\dot v_{f^h}\\
=
(\dot v_{f^1}\cdots \dot v_{f^{h-1}})^{-1}k\dot v_{f^1}\cdots\dot v_{f^{h-1}}
\dot v_{f^h}
(\dot v_{f^1}\cdots \dot v_{f^h})^{-1}k^{-1}\dot v_{f^1}\cdots\dot v_{f^h}=\dot v_{f^h}.
\end{multline*}

Therefore $p^F([kc])=[d^F]$.
So it remains to prove that $kc^F=d^F$. Let $i\in I^F$. If $i>f^m_2+1$,
we get $c_i=d_i$ and thus
$$
(kc^F)_i=\dot v_ic_i\dot v_i^{-1}=\dot v^id_i(\dot v^i)^{-1}=d_i^F.
$$
Now let $i=f^m_2+1$. Then $\dot v_i=\dot v_{f^1}\cdots\dot v_{f^m}$ and we get
$$
d^F_i=\dot v^id_i(\dot v^i)^{-1}=\dot v^i(\dot v_{f^1}\cdots\dot v_{f^m})^{-1}k\dot v_{f^1}\cdots\dot v_{f^m}c_i(\dot v^i)^{-1}
=k\dot v^ic_i(\dot v^i)^{-1}=(kc^F)_i.
$$

{\it Part 3: $p^F$ is continuous.} We describe the above mentioned process of bringing an element $c\in C(s,v)$
to its $F$-balanced form in more detail.
For any $m=1,\ldots,n$, let $b_i:C(s,v)\to C(s,v)$ be the following map:
$$
b_m(c)=c\,\delta_{f^m_2}((c_{f^m_1}c_{f^m_1+1}\cdots c_{f^m_2})^{-1}\dot v_{f^m})
$$
This map is continuous and has the following property: if $c$ is $\{f^1,\ldots,f^{m-1}\}$-balanced,
then $b_m(c)$ is $\{f^1,\ldots,f^m\}$-balanced. Thus $b_m\cdots b_1(c)$ is $F$-balanced for
any sequence $c\in C(s,v)$ and moreover $[c]=[b_m\cdots b_1(c)]$.

It remains to consider the continuous map $\tilde p^F(c):C(s,v)\to\BS_c(s^F,v^F)$ given by
$\tilde p^F(c)=[(b_m\cdots b_1(c))^F]$. As is proved before, this maps factors through $\BS_c(s,v)$
and thus yields $p^F$.
\end{proof}

\subsection{Closed nested structures} A nested structure $R$ on $I$ is called {\it closed} if $I\ne\emptyset$ and
it contains the pair $(\min I,\max I)$, which we denote by $\span I$. In this case, we consider the following set
$$
\widetilde C(s,v)=\tilde\xi^{-1}(\dot v_{\span I})\cap\bigcap_{r\in R\setminus\{\span(I)\}}\xi_r^{-1}(v_rK),
$$
where $\tilde\xi:C(s)\to C$ is the map given by $\tilde\xi(c)=c_{\min I}c_{\min I+1}\cdots c_{\max I}$.
We obviously have
$$
\widetilde C(s,v)\subset C(s,v).
$$
The right set is invariant under right action of $K(I)$. However, the left set is not. To get the invariance, let
us consider the embedding $\alpha:K(I')\to K(I)$ given by
$$
\alpha(k)_i=\left\{
\begin{array}{ll}
k_i&\text{ if }i\ne\max I\\
1&\text{ otherwise }
\end{array}
\right.
$$
Then we set
$$
c* k=c\,\alpha(k)
$$
for any $c\in\widetilde C(s,v)$ and $k\in K(I')$.
It is easy to check that $\widetilde C(s,v)$ is invariant under this action of $K(I')$.
Let $\widetilde\nu_c(s,v):\widetilde C(s,v)\to\BS_c(s,v)$ denote the restriction of $\nu_c(s,v)$.

\begin{lemma}\label{lemma:4}
 The map $\widetilde\nu_c(s,v)$ is surjective. Its fibres are exactly the orbits
of the\linebreak $*$-action of $K(I')$. The subspace topology on $\BS_c(s,v)$
induced from its embedding into $\BS_c(s)$ coincides with the quotient topology
induced by $\widetilde\nu_c(s,v)$.
\end{lemma}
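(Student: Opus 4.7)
The plan is to reduce all three assertions to the existence of a continuous retraction $\rho:C(s,v)\to\widetilde C(s,v)$ satisfying $\nu_c(s,v)=\widetilde\nu_c(s,v)\circ\rho$. For any $c\in C(s,v)$ I will set
$$
k_c=(c_{\min I}c_{\min I+1}\cdots c_{\max I})^{-1}\dot v_{\span I},
$$
which lies in $K$ by the condition $\xi_{\span I}(c)\in\dot v_{\span I}K$ and depends continuously on $c$, and then put $\rho(c)=c\,\delta_{\max I}(k_c)$. The key observation making this work is that the disjointness axiom $\{r_1,r_2\}\cap\{r'_1,r'_2\}=\emptyset$ for distinct elements of a nested structure forces $r_2<\max I$ for every $r\in R\setminus\{\span I\}$. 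Hence modifying only the $\max I$-th coordinate leaves each $\xi_r$ with $r\ne\span I$ untouched while adjusting the total product to $\dot v_{\span I}$ exactly, so $\rho(c)\in\widetilde C(s,v)$; moreover $[\rho(c)]=[c]$ in $\BS_c(s)$ because the correction sits in $K$. Surjectivity of $\widetilde\nu_c(s,v)$ follows at once, and we obtain the intertwining identity $\nu_c(s,v)=\widetilde\nu_c(s,v)\circ\rho$.

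Next I will establish the fibre description by a telescoping computation. Given $c,c'\in\widetilde C(s,v)$ with $[c']=[c]$, write $c'=ck$ for some $k\in K(I)$ and compute
$$
\dot v_{\span I}=c'_{\min I}\cdots c'_{\max I}=k_{-\infty}^{-1}(c_{\min I}\cdots c_{\max I})k_{\max I}=\dot v_{\span I}\,k_{\max I},
$$
where the convention $k_{-\infty}=1$ has been used. This forces $k_{\max I}=1$, so $k=\alpha(k')$ for a unique $k'\in K(I')$ and $c'=c*k'$. The converse inclusion---that every $*$-orbit of $K(I')$ sits in one fibre---is immediate from the definition of $*$.

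For the equality of topologies, I will use that the subspace topology on $\BS_c(s,v)$ already coincides with the quotient topology induced by $\nu_c(s,v)$, as noted just after~(\ref{eq:bt7:10}). Since $\widetilde\nu_c(s,v)$ is the restriction of $\nu_c(s,v)$, any subspace-open set pulls back to an open set in $\widetilde C(s,v)$. For the converse, continuity of $\rho$ together with the factorisation $\nu_c(s,v)=\widetilde\nu_c(s,v)\circ\rho$ yields the identity $\nu_c(s,v)^{-1}(U)=\rho^{-1}(\widetilde\nu_c(s,v)^{-1}(U))$, so openness of $\widetilde\nu_c(s,v)^{-1}(U)$ in $\widetilde C(s,v)$ implies openness of $\nu_c(s,v)^{-1}(U)$ in $C(s,v)$ and hence openness of $U$ in $\BS_c(s,v)$. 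The only real subtlety I foresee is the endpoint-disjointness observation underlying the construction of $\rho$; everything else is routine.
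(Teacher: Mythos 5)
Your proposal is correct and follows essentially the same route as the paper: your retraction $\rho$ is exactly the paper's map $\beta(c)=c\,\delta_{\max I}\bigl(\tilde\xi(c)^{-1}\dot v_{\span I}\bigr)$, the telescoping computation forcing $k_{\max I}=1$ is identical, and the topology comparison via the factorisation $\nu_c(s,v)=\widetilde\nu_c(s,v)\circ\rho$ matches the paper's use of the pair $(\iota,\beta)$. Your explicit remark that the endpoint-disjointness axiom forces $r_2<\max I$ for $r\ne\span I$, so that $\rho$ really lands in $\widetilde C(s,v)$, is a detail the paper leaves implicit and is worth having.
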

\begin{proof} 
Any $K(I')$-orbit is contained in a fibre, as $[c*k]=[c\,\alpha(k)]=[c]$ for any $c\in\widetilde C(s,v)$
and $k\in K(I')$.

Now suppose that $c,\tilde c\in\widetilde C(s,v)$ and $[c]=[\tilde c]$. Then $\tilde c=ck$ for some $k\in K(I)$.
We get
$$
\dot v_{\span I}=\tilde c_{\min I}\tilde c_{\min I+1}\cdots\tilde c_{\max I}
=c_{\min I}c_{\min I+1}\cdots c_{\max I}k_{\max I}=\dot v_{\span I}k_{\max I}.
$$
Hence $k_{\max I}=1$. Thus
$\alpha(k')=k$ and $\tilde c=c\,\alpha(k')=c*k'$.

Finally, let us prove the statement about the surjectivity and the topologies. Let $\iota:\widetilde C(s,v)\to C(s,v)$
denote the natural embedding and $\beta:C(s,v)\to\widetilde C(s,v)$ be the map defined by
$$
\beta(c)=c\,\delta_{\max I}\Big(\tilde\xi(c)^{-1}\dot v_{\span I}\Big).
$$
We get the following commutative diagram:
$$
\begin{tikzcd}[column sep=15ex,row sep=8ex]
\widetilde C(s,v)\arrow{r}{\iota}\arrow{dr}[swap]{\widetilde\nu_c(s,v)}&C(s,v)\arrow{r}{\beta}\arrow{d}{\nu_c(s,v)}&\widetilde C(s,v)\arrow{dl}{\widetilde\nu_c(s,v)}\\
&\BS_c(s,v)&
\end{tikzcd}
$$
The surjectivity of $\widetilde\nu_c(s,v)$ follows from the surjectivity of $\nu_c(s,v)$ and the commutativity
of the right triangle.

For any subset $U\subset\BS_c(s,v)$, we have
$$
(\nu_c(s,v))^{-1}(U)\text{ is open }\Leftrightarrow(\widetilde\nu_c(s,v))^{-1}(U)\text{ is open},
$$
as follows from the continuity of $\iota$ and $\beta$ and the equalities
$$
(\widetilde\nu_c(s,v))^{-1}(U)=\iota^{-1}\nu_c(s,v)^{-1}(U),\quad \nu_c(s,v)^{-1}(U)=\beta^{-1}\widetilde\nu_c(s,v)^{-1}(U).
$$
We have thus proved that the quotient topology on $\BS_c(s,v)$ induced by $\widetilde\nu_c(s,v)$ coincides with
the quotient topology induced by $\nu_c(s,v)$. The latter topology coincides with the subspace topology induced
by the embedding of $\BS_c(s,v)$ to $\BS_c(s)$. 
\end{proof}

{\bf Remark.} It follows from this lemma that $\tilde\nu_c(s,v)$ is an open map.

\subsection{Fibres}\label{Fibres} Let $f\in F$. For any sequence $\gamma$ on $I$, we denote by $\gamma_f$
the restriction of $\gamma$ to $[f]$. We also set
$$
R_f=\{r\in R\suchthat r\subset f\},\quad v_f=v|_{R_f}.
$$
From this construction, it is obvious that $R_f$ is a closed nested structure on $[f]$.

We consider the following continuous map:
$$
\theta:C(s^F,v^F)\times\widetilde C(s_{f^1},v_{f^1})\times\cdots\times\widetilde C(s_{f^{\n}},v_{f^{\n}})
\to C(s)
$$
given by
$$
\theta(c,c^{(1)},\cdots,c^{(\n)})_i=
\left\{
{\arraycolsep=2pt
\begin{array}{ll}
(\dot v^i)^{-1}c_i\dot v^i&\text{ if }i\in I^F;\\[4pt]
c^{(m)}_i&\text{ if }i\in [f^m].
\end{array}}
\right.
$$
We claim that the image of $\theta$ is actually contained in $C(s,v)$. Indeed,
let $r\in R\setminus R^F$. Then there exists $m=1,\ldots,\n$ such that $r\subset f^m$.
We get
$$
\xi_r\theta(c,c^{(1)},\cdots,c^{(\n)})=c^{(m)}_{r_1}c^{(m)}_{r_1+1}\cdots c^{(m)}_{r_2}K=v_rK.
$$
Now we take $r\in R^F$. Let  $l$ and $m$ be the numbers such that
$$
f^l_2<r_1<f^{l+1}_1, \quad f^m_2<r_2<f^{m+1}_1.
$$
We get
\begin{multline*}
\xi_r\theta(c,c^{(1)},\cdots,c^{(\n)})=(\dot v_{f^1}\cdots\dot v_{f^l})^{-1}c_{r_1}\cdots c_{f^{l+1}_1-1}(\dot v_{f^1}\cdots\dot v_{f^l})\times\\
\times c^{(l+1)}_{f^{l+1}_1}c^{(l+1)}_{f^{l+1}_1+1}\cdots c^{(l+1)}_{f^{l+1}_2}(\dot v_{f^1}\cdots\dot v_{f^{l+1}})^{-1}c_{f^{l+1}_2+1}\cdots c_{f^{l+2}_1-1}(\dot v_{f^1}\cdots\dot v_{f^{l+1}})\times\\
\cdots\times c^{(m)}_{f^m_1}c^{(m)}_{f^m_1+1}\cdots c^{(m)}_{f^m_2}(\dot v_{f^1}\cdots\dot v_{f^m})^{-1}c_{f^m_2+1}\cdots c_{r_2}(\dot v_{f^1}\cdots\dot v_{f^m})K\\
=(\dot v_{f^1}\cdots\dot v_{f^l})^{-1}c_{r_1}\cdots c_{f^{l+1}_1-1}c_{f^{l+1}_2+1}\cdots c_{f^{l+2}_1-1}\cdots c_{f^m_2+1}\cdots c_{r_2}(\dot v_{f^1}\cdots\dot v_{f^m})K\\
=(\dot v_{f^1}\cdots\dot v_{f^l})^{-1}v^F_r(\dot v_{f^1}\cdots\dot v_{f^m})K=v_rK.
\end{multline*}

The composition $\nu_c(s)\theta$ obviously factors through the $*$-actions of each group
$K([f^m]')$ on the $m+1$th factor. Thus we get the continuous map
$$
\Theta:C(s^F,v^F)\times\BS_c(s_{f^1},v_{f^1})\times\cdots\times\BS_c(s_{f^{\n}},v_{f^{\n}})
\to\BS_c(s,v)
$$
This map however does not factor in general through the action on
$K(I^F)$
on the first factor.

\begin{lemma}\label{lemma:5}
\begin{enumerate}
\item\label{lemma:5:p:1} $p^F\Theta(c,a^{(1)},\cdots,a^{(\n)})=[c]$.\\[-8pt]
\item\label{lemma:5:p:2} For any $c\in C(s^F,v^F)$, the map $\Theta(c,\_,\ldots,\_)$
maps $\BS_c(s_{f^1},v_{f^1})\times\cdots\times\BS_c(s_{f^{\n}},v_{f^{\n}})$
homeomorphically to $(p^F)^{-1}([c])$.
\end{enumerate}
\end{lemma}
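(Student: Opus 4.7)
My plan is to dispatch (1) by a direct computation, and to prove (2) by showing that $\Theta(c,\_,\ldots,\_)$ is a continuous bijection between compact Hausdorff spaces; compactness then upgrades this to a homeomorphism automatically.

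For (1), I would choose representatives $c^{(m)}\in\widetilde C(s_{f^m},v_{f^m})$ of the arguments $a^{(m)}$. The defining condition of $\widetilde C$ gives $c^{(m)}_{f^m_1}\cdots c^{(m)}_{f^m_2}=\dot v_{f^m}$, so the sequence $\theta(c,c^{(1)},\ldots,c^{(\n)})$ is $F$-balanced. Evaluating $p^F$ via its formula on an $F$-balanced representative, for $i\in I^F$ we obtain
$$
\theta(c,c^{(1)},\ldots,c^{(\n)})^F_i=\dot v^i\cdot(\dot v^i)^{-1}c_i\dot v^i\cdot(\dot v^i)^{-1}=c_i,
$$
whence $p^F\Theta(c,a^{(1)},\ldots,a^{(\n)})=[c]$.

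For (2), both the source $\prod_m\BS_c(s_{f^m},v_{f^m})$ and the target $(p^F)^{-1}([c])$ are compact Hausdorff (the latter as a closed subset of $\BS_c(s,v)$), so it suffices to verify continuity and bijectivity; continuity is inherited from $\Theta$. For injectivity, suppose $\Theta(c,a^{(1)},\ldots)=\Theta(c,b^{(1)},\ldots)$, so that with representatives $c^{(m)},d^{(m)}$ one has $\theta(c,d^{(1)},\ldots)=\theta(c,c^{(1)},\ldots)\cdot k$ for some $k\in K(I)$. I would prove by strong induction that $k_j=1$ for every $j\in\{-\infty\}\cup I^F\cup\{f^m_2:m=1,\ldots,\n\}$. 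The crucial combinatorial observation is that, since the intervals $[f^m]$ are pairwise disjoint, the index $i-1$ (for $i\in I^F$) and the index $f^m_1-1$ both necessarily lie in this set, never in the strict interior of any $[f^l]$. The inductive step at $i\in I^F$ uses the equation $k_{i-1}^{-1}\theta_ik_i=\theta_i$ with $k_{i-1}=1$, while the step at $f^m_2$ uses the $F$-balance identity~(\ref{eq:coh:7}). Once $k_{f^m_1-1}=k_{f^m_2}=1$ for every $m$, the $[f^m]$-restriction of $d=c\cdot k$ reads $d^{(m)}=c^{(m)}*k''$ for $k''\in K([f^m]')$ defined by $k''_i=k_i$, and Lemma~\ref{lemma:4} yields $a^{(m)}=b^{(m)}$.

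For surjectivity, given $[c']\in(p^F)^{-1}([c])$, I would first produce an $F$-balanced representative $c''$ of $[c']$ via the maps $b_m\cdots b_1$ from the proof of Lemma~\ref{lemma:3}; then $(c'')^F=c\cdot k^{(F)}$ for some $k^{(F)}\in K(I^F)$. I would next multiply $c''$ on the right by a suitable $\ell\in K(I)$ that preserves $F$-balance and forces $(c''\ell)^F=c$, defined by $\ell_i=(\dot v^i)^{-1}(k^{(F)}_i)^{-1}\dot v^i$ for $i\in I^F$, by the recurrence $\ell_{f^m_2}=\dot v_{f^m}^{-1}\ell_{f^m_1-1}\dot v_{f^m}$ forced by $F$-balance, and by $\ell_i=1$ for $i\in[f^m]\setminus\{f^m_2\}$. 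The two prescriptions agree at every $f^m_2$ thanks to the identity $\dot v^{f^m_2+1}=\dot v^{f^m_1-1}\dot v_{f^m}$, itself a consequence of the same index bookkeeping. Setting $c^{(m)}=(c''\ell)|_{[f^m]}\in\widetilde C(s_{f^m},v_{f^m})$, a direct check gives $\theta(c,c^{(1)},\ldots,c^{(\n)})=c''\ell$, whence $\Theta(c,[c^{(1)}],\ldots,[c^{(\n)}])=[c']$. The main obstacle throughout is this careful interaction between $I^F$, the interiors of the $[f^l]$, and the boundary indices $f^l_1-1,f^l_2$; once one checks that the predecessor of anything in $\{-\infty\}\cup I^F\cup\{f^m_2\}$ remains inside this set, both arguments fall into place.
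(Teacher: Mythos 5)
Your proposal is correct and follows essentially the same route as the paper: a direct computation on $F$-balanced representatives for (1), and for (2) the compact-Hausdorff continuous-bijection argument, with injectivity proved by the same induction showing $k_j=1$ on $\{-\infty\}\cup I^F\cup\{f^1_2,\ldots,f^{\n}_2\}$ and surjectivity by right-multiplying a balanced representative by an explicitly constructed element of $K(I)$ (your $\ell$ is, up to inverses and whether one adjusts $c''$ or $d$, the paper's $k_F$).
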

\begin{proof}
\ref{lemma:5:p:1} By Lemma~\ref{lemma:4}, we get for any $m=1,\ldots,\n$ that
$a^{(m)}=[c^{(m)}]$ for some $c^{(m)}\in\widetilde C(s_{f^m},v_{f^m})$.
Note that by definition the sequence $\theta(c,c^{(1)},\cdots,c^{(\n)})$ is $F$-balanced.
Thus
$$
p^F\Theta(c,a^{(1)},\cdots,a^{(\n)})=p^F([\theta(c,c^{(1)},\cdots,c^{(\n)})])=[\theta(c,c^{(1)},\cdots,c^{(\n)})^F]=[c].
$$

\ref{lemma:5:p:2}
As our map is continuous and between compact Hausdorff spaces, it remains to prove that it is bijective.

The first part proves that the image of $\Theta(c,\_,\ldots,\_)$ is indeed contained in $(p^F)^{-1}([c])$.
Let us prove the surjectivity. Let $a\in(p^F)^{-1}([c])$ be an arbitrary point. We can write $a=[d]$ for some $F$-balanced
$d\in C(s,v)$. Then we get $[d^F]=p^F([d])=[c]$. Therefore $d^F k=c$ for some
$k\in K(I^F)$.
Let us define
$k_F\in K(I)$
as follows. We set $(k_F)_i=(\dot v^i)^{-1}k_i\dot v^i$ if $i\in I^F$.
If $i=f^m_2$ for some $m=1,\ldots,\n$, then we denote by $j$
the element of $I^F\cup\{-\infty\}$ immediately preceding $i$.
Then we define
$$
(k_F)_{f^m_2}=(\dot v_{f^1}\cdots\dot v_{f^m})^{-1}k_j\dot v_{f^1}\cdots\dot v_{f^m}
$$
For all other $i$, we set $(k_F)_i=1$.

We claim that $d k_F$ is also $F$-balanced. We get
\begin{multline*}
(dk_F)_{f^m_1}\cdots (dk_F)_{f^m_2}\\
=(\dot v_{f^1}\cdots\dot v_{f^{m-1}})^{-1}k_j^{-1}\dot v_{f^1}\cdots\dot v_{f^{m-1}}d_{f^m_1}\cdots d_{f^m_2}(\dot v_{f^1}\cdots\dot v_{f^m})^{-1}k_j\dot v_{f^1}\cdots\dot v_{f^m}\\
=(\dot v_{f^1}\cdots\dot v_{f^{m-1}})^{-1}k_j^{-1}\dot v_{f^1}\cdots\dot v_{f^{m-1}}\dot v_{f^m}(\dot v_{f^1}\cdots\dot v_{f^m})^{-1}k_j\dot v_{f^1}\cdots\dot v_{f^m}=
\dot v_{f^m}.
\end{multline*}

Let us prove that $(d k_F)_i=(\dot v^i)^{-1}c_i\dot v^i$ for $i\in I^F$. If $i-1\notin\{f^1_2,\ldots,f^m_2\}$,
then we get $\dot v^{i-1}=\dot v^i$ and
\begin{multline*}
(d k_F)_i=(k_F)_{i-1}^{-1}d_i(k_F)_i=(\dot v^{i-1})^{-1}k_{i-1}^{-1}\dot v^{i-1}d_i(\dot v^i)^{-1}k_i^{-1}\dot v_i\\
=(\dot v^i)^{-1}k_{i-1}^{-1}d^F_ik_i^{-1}\dot v^i=(\dot v^i)^{-1}(d^F k)_i\dot v_i=(\dot v^i)^{-1}c_i\dot v^i.
\end{multline*}
If $i=f^m_2+1$ for some $m$, then $\dot v^i=\dot v_{f^1}\cdots\dot v_{f^m}$.
Let $j$ be the element of $I^F\cup\{-\infty\}$ immediately preceding $i$.
We get
\begin{multline*}
(d k_F)_i=(k_F)_{i-1}^{-1}d_i(k_F)_i=
(\dot v_{f^1}\cdots\dot v_{f^m})^{-1}k^{-1}_j\dot v_{f^1}\cdots\dot v_{f^m}d_i(\dot v^i)^{-1}k_i^{-1}\dot v^i\\
=(\dot v^i)^{-1}k^{-1}_j\dot v^id_i(\dot v^i)^{-1}k_i^{-1}\dot v^i=(\dot v^i)^{-1}k^{-1}_jd^F_ik_i^{-1}\dot v_i
=(\dot v^i)^{-1}(d^F k)_i\dot v^i=(\dot v^i)^{-1}c_i\dot v^i.
\end{multline*}
Now it is easy to check that
$$
\theta(c,(d k_F)_{f^1},\ldots,(d k_F)_{f^m})=d k_F.
$$
Hence
$$
\Theta(c,[(d k_F)_{f^1}],\ldots,[(d k_F)_{f^m}])=[d k_F]=[d]=a.
$$

Finally, let us prove the injectivity. Let $a^{(m)},b^{(m)}\in\BS_c(s_{f^m},v_{f^m})$, where $m=1,\ldots,\n$,
be points such that $\Theta(c,a^{(1)},\ldots,a^{(\n)})=\Theta(c,b^{(1)},\ldots,b^{(\n)})$.
We write $a^{(m)}=[c^{(m)}]$ and $b^{(m)}=[d^{(m)}]$ for some sequences
$c^{(m)},d^{(m)}\in\widetilde C(s_{f^m},v_{f^m})$.
Then we get $[\theta(c,c^{(1)},\ldots,c^{(\n)})]=[\theta(c,d^{(1)},\ldots,d^{(\n)})]$.
Thus there exists
$k\in K(I)$
such that
\begin{equation}\label{eq:coh:8}
\theta(c,d^{(1)},\ldots,d^{(\n)})=\theta(c,c^{(1)},\ldots,c^{(\n)}) k.
\end{equation}
We will prove by induction that $k_i=1$ for any $i\in I^F\cup\{f^1_2,\ldots,f^{\n}_2\}\cup\{-\infty\}$.
We start with the obvious case $i=-\infty$. Now suppose that $i\in I^F$ and the claim holds for
smaller indices. We have obviously $i-1\in I^F\cup\{f^1_2,\ldots,f^{\n}_2\}\cup\{-\infty\}$, whence $k_{i-1}=1$.
Evaluating~(\ref{eq:coh:8}) at $i$, we get
$$
(\dot v^i)^{-1}c_i\dot v^i=k_{i-1}^{-1}(\dot v^i)^{-1}c_i\dot v^ik_i=(\dot v^i)^{-1}c_i\dot v^ik_i.
$$
Hence we get $k_i=1$.

Consider the case $i=f^m_2$ for some $m$. Then we have $f^m_1-1\in I^F\cup\{f^1_2,\ldots,f^{\n}_2\}\cup\{-\infty\}$,
whence $k_{f^m_1-1}=1$. We get from~(\ref{eq:coh:8}) that
$$
\dot v_{f^m}=d^{(m)}_{f^m_1}d^{(m)}_{f^m_1+1}\cdots d^{(m)}_{f^m_2}=k^{-1}_{f^m_1-1}c^{(m)}_{f^m_1}c^{(m)}_{f^m_1+1}\cdots c^{(m)}_{f^m_2}k_{f^m_2}=\dot v_{f^m}k_{f^m_2}.
$$
Hence we get $k_{f^m_2}=1$.

From~(\ref{eq:coh:8}), it is obvious now that $d^{(m)}k_{f^m}=c^{(m)}$. Hence $a^{(m)}=b^{(m)}$.
\end{proof}

\subsection{$p^F$ as a fibre bundle} From the results of the previous section, we get the following result.

\begin{theorem}\label{theorem:1}
The map $p^F:\BS_c(s,v)\to\BS_c(s^F,v^F)$ is a fibre bundle with fibre\linebreak
$\BS_c(s_{f^1},v_{f^1})\times\cdots\times\BS_c(s_{f^{\n}},v_{f^{\n}})$.
\end{theorem}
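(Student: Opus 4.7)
The plan is to derive local triviality of $p^F$ from Lemma~\ref{lemma:5} and the existence of continuous local sections of the quotient $\nu_c(s^F,v^F): C(s^F,v^F) \to \BS_c(s^F,v^F)$. Such sections exist because, as noted in Section~\ref{Nested_structures}, $\nu_c(s^F,v^F)$ is itself a fibre bundle (a pullback of $\nu_c(s^F)$), hence locally trivializable. Write $F := \prod_{m=1}^\n \BS_c(s_{f^m},v_{f^m})$; this is a compact Hausdorff space.

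Given an open $U \subset \BS_c(s^F,v^F)$ equipped with a continuous section $\sigma: U \to C(s^F,v^F)$, the proposal is to define
$$\Phi_\sigma: U \times F \to (p^F)^{-1}(U), \qquad \Phi_\sigma(x, a^{(1)}, \ldots, a^{(\n)}) = \Theta(\sigma(x), a^{(1)}, \ldots, a^{(\n)}),$$
and check that it is a trivialization. Continuity is immediate from the continuity of $\Theta$ and $\sigma$. The identity $p^F \circ \Phi_\sigma = \pr_1$ is Lemma~\ref{lemma:5}\ref{lemma:5:p:1}. Bijectivity onto $(p^F)^{-1}(U)$ follows fibrewise from Lemma~\ref{lemma:5}\ref{lemma:5:p:2}: for any $x \in U$ and $a \in (p^F)^{-1}(x)$, one has $\sigma(x) \in C(s^F,v^F)$ with $[\sigma(x)] = x$, so Lemma~\ref{lemma:5}\ref{lemma:5:p:2} provides a unique tuple with $a = \Theta(\sigma(x), a^{(1)}, \ldots, a^{(\n)})$.

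The main obstacle is that $\Phi_\sigma$ is only obviously a continuous bijection, not obviously a homeomorphism. Here compactness saves the day. Since $\BS_c(s^F,v^F)$ is compact Hausdorff (hence regular), any $x_0 \in U$ has an open neighborhood $W$ with $\overline W \subset U$. Then $\overline W \times F$ is compact; $(p^F)^{-1}(\overline W)$ is closed in the compact Hausdorff space $\BS_c(s,v)$, hence itself compact Hausdorff; and $\Phi_\sigma$ restricts to a continuous bijection between these two compact Hausdorff spaces, hence a homeomorphism. Restricting this homeomorphism to the open subset $W \times F$ yields a genuine trivialization of $p^F$ over $W$. Covering $\BS_c(s^F,v^F)$ by such neighborhoods $W$ finishes the proof.
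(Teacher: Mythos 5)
Your proposal is correct and follows essentially the same route as the paper's own proof: a local section of the bundle $\nu_c(s^F,v^F)$, the map $\Theta$ together with both parts of Lemma~\ref{lemma:5} to get a continuous bijection over $U$, and the shrink-to-$\overline W\subset U$ compactness argument to upgrade it to a homeomorphism. The only (harmless) quibble is your reuse of the letter $F$ for the fibre, which clashes with the subset $F\subset R$ already in play.
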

\begin{proof} For brevity, we set $\mathcal D=\BS_c(s_{f^1},v_{f^1})\times\cdots\times\BS_c(s_{f^{\n}},v_{f^{\n}})$.
Let $b\in\BS_c(s^F,v^F)$ be an arbitrary point. As
$$
\nu_c(s^F,v^F):C(s^F,v^F)\to\BS_c(s^F,v^F)
$$
is a fibre bundle, there exists an open neighbourhood $U$ of $b$ and a section $\tau:U\to C(s^F,v^F)$
of this bundle. Now we define the map
$$
\phi:U\times\mathcal D\to\BS_c(s,v)
$$
by
$$
\phi(u,a^{(1)},\ldots,a^{(\n)})=\Theta(\tau(u),a^{(1)},\ldots,a^{(\n)}).
$$
By part~\ref{lemma:5:p:1} of Lemma~\ref{lemma:5}, we get the following commutative diagram:
$$
\begin{tikzcd}
U\times\mathcal D\arrow{rr}{\phi}\arrow{rd}[swap]{p_1}&&\arrow{ld}{p^F}(p^F)^{-1}(U)\\
&U
\end{tikzcd}
$$
Here the right arrow should have been labelled by the restriction $p^F|_{(p^F)^{-1}(U)}$ rather than by the map $p^F$ itself.
We will use similar abbreviations in the sequel.

Part~\ref{lemma:5:p:2} of the same lemma proves that the top arrow is bijective. Let $V$ be an open neighbourhood of $b$
such that $\overline V\subset U$. The the restriction of $\phi$ to $\overline V$ is a homeomorphism
from
$\overline V\times\mathcal D$
to $(p^F)^{-1}(\overline V)$, as both spaces are compact. Then the restriction of $\phi$
to
$V\times\mathcal D$
is a homeomorphism from $V$ to $(p^F)^{-1}(V)$ and the following diagram is commutative:
$$
\begin{tikzcd}
V\times\mathcal D\arrow{rr}{\phi}\arrow{rd}[swap]{p_1}&&\arrow{ld}{p^F}(p^F)^{-1}(V)\\
&V
\end{tikzcd}
$$
\end{proof}

\subsection{Example}\label{Example} Suppose that $G=\SL_5(\C)$ with the following Dynkin diagram:

\begin{center}
\setlength{\unitlength}{1.2mm}
\begin{picture}(45,0)
\put(0,0){\circle{1}}
\put(10,0){\circle{1}}
\put(20,0){\circle{1}}
\put(30,0){\circle{1}}

\put(0.5,0){\line(1,0){9}}
\put(10.5,0){\line(1,0){9}}
\put(20.5,0){\line(1,0){9}}

\put(-1,-3.5){$\scriptstyle\alpha_1$}
\put(9,-3.5){$\scriptstyle\alpha_2$}
\put(19,-3.5){$\scriptstyle\alpha_3$}
\put(29,-3.5){$\scriptstyle\alpha_4$}
\end{picture}
\end{center}

\hspace{20mm}

\noindent
The simple reflections are $\omega_i=s_{\alpha_i}$. Let us consider the following sequence and the element of the Weyl group
$$
s=(\omega_4,   \omega_1,\omega_2,\omega_1,\omega_2,\omega_1  ,\omega_3,\omega_4,\omega_3,\omega_4),\quad w=\omega_2\omega_3\omega_4.
$$
We would like to study the space $\BS_c(s,w)$. Computing directly with matrices of $\SL_5(\C)$,
we can prove that $\BS_c(s,w)=\BS_c(s,v)$, where $v$ is the map from $R=\{(1,10),(2,6)\}$ to $W$ given by
$v((1,10))=w$ and $v((2,6))=\omega_2$. By Theorem~\ref{theorem:1} applied for $F=\{(2,6)\}$,
we get the fibre bundle $p^F:\BS_c(s,w)\to\BS_c(s^F,w\omega_2)$ with fibre
$\BS_c((\omega_1,\omega_2,\omega_1,\omega_2,\omega_1),\omega_2)$. By definition, we get
$s^F=(\omega_4,\omega_2\omega_3\omega_2,\omega_4,\omega_2\omega_3\omega_2,\omega_4)
=(\omega_4,\omega_3,\omega_4,\omega_3,\omega_4)^{\omega_2}$.
Hence by~(\ref{eq:6}), we get $\BS_c(s^F,w\omega_2)\cong\BS_c((\omega_4,\omega_3,\omega_4,\omega_3,\omega_4),\omega_3\omega_4)$.
Replacing compactly defined Bott-Samelson varieties with the usual ones as described in
Section~\ref{Isomorphism_of_two_constructions}, we get the following fibre bundle:
$$
\begin{tikzcd}
\BS((\omega_1,\omega_2,\omega_1,\omega_2,\omega_1),\omega_2)\arrow{r}&
\BS(s,w)\arrow{r}{p^F}&\BS((\omega_4,\omega_3,\omega_4,\omega_3,\omega_4),\omega_3\omega_4).
\end{tikzcd}
$$
By~\cite[Corollary 5]{CBSV}, all three spaces above are smooth.

\subsection{Affine pavings}\label{Affine_pavings} We say that a topological space $X$ has an {\it affine paving} if there exists a filtration
$$
\emptyset=X_0\subset X_1\subset X_2\subset\cdots\subset X_{n-1}\subset X_n=X
$$
such that all $X_i$ are closed and each difference $X_i\setminus X_{i-1}$ is homeomorphic to $\C^{m_i}$
for some integer $m_i$. As any fibre bundle over a Euclidian space is trivial,
we get the following result.

\begin{proposition}\label{proposition:1}
Let $Y\to X$ be a fibre bundle with fibre $Z$. If $X$ and $Z$ have affine pavings, then $Y$ also has.
\end{proposition}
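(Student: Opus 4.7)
The plan is to use the affine pavings of the base $X$ and the fibre $Z$ to build one of $Y$, via the preimage filtration and the stated triviality of fibre bundles over $\C^m$. Fix pavings
$$
\emptyset=X_0\subset X_1\subset\cdots\subset X_n=X,\qquad
\emptyset=Z_0\subset Z_1\subset\cdots\subset Z_k=Z
$$
with $X_i\setminus X_{i-1}\cong\C^{m_i}$ and $Z_j\setminus Z_{j-1}\cong\C^{l_j}$. Let $\pi:Y\to X$ denote the given fibre bundle and set $Y_i=\pi^{-1}(X_i)$, which is closed in $Y$ by continuity.

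Restricting $\pi$ gives a fibre bundle $Y_i\setminus Y_{i-1}\to X_i\setminus X_{i-1}\cong\C^{m_i}$ with fibre $Z$. By the triviality over Euclidean bases used in the statement, I obtain a homeomorphism
$$
\phi_i:Y_i\setminus Y_{i-1}\stackrel\sim\to\C^{m_i}\times Z.
$$
The target carries the obvious product paving $\C^{m_i}\times Z_j$, whose successive differences are $\C^{m_i}\times(Z_j\setminus Z_{j-1})\cong\C^{m_i+l_j}$. Pulling this back via $\phi_i^{-1}$ yields a chain of closed subsets $W_{i,0}\subset W_{i,1}\subset\cdots\subset W_{i,k}=Y_i\setminus Y_{i-1}$ of $Y_i\setminus Y_{i-1}$ with $W_{i,j}\setminus W_{i,j-1}\cong\C^{m_i+l_j}$.

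To assemble a global paving, I insert the sets $Y_{i-1}\cup W_{i,j}$ between $Y_{i-1}$ and $Y_i$ in the original filtration. The key point is that each such set is closed in $Y$: its complement in $Y_i$ equals $(Y_i\setminus Y_{i-1})\setminus W_{i,j}$, which is open in the open subspace $Y_i\setminus Y_{i-1}\subset Y_i$, hence open in $Y_i$; so $Y_{i-1}\cup W_{i,j}$ is closed in $Y_i$, and then in $Y$ because $Y_i$ is closed in $Y$. The successive differences of the refined chain are precisely the $W_{i,j}\setminus W_{i,j-1}\cong\C^{m_i+l_j}$, which exhibits $Y$ as affinely paved.

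The only non-formal input is the triviality of $Y_i\setminus Y_{i-1}\to\C^{m_i}$ invoked from the statement; everything else is bookkeeping with closed subspaces of a filtered space. The one place that could trip one up is the closed-in-$Y$ condition for the interleaved sets $Y_{i-1}\cup W_{i,j}$, but as shown above this reduces to the tautology that $Y_i\setminus Y_{i-1}$ is open in $Y_i$, so there is no real obstacle.
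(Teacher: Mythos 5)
Your proof is correct and follows exactly the route the paper intends: the paper dispenses with the proposition in one sentence (``as any fibre bundle over a Euclidean space is trivial, we get the following result''), and your argument is precisely the careful elaboration of that remark --- pulling back the paving of $X$, trivializing over each cell, and interleaving the product paving of $\C^{m_i}\times Z$ into the filtration. The closedness check for the sets $Y_{i-1}\cup W_{i,j}$ is the only point requiring care, and you handle it correctly.
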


For the next lemma, remember Definition~\ref{definition:1}.

\begin{lemma}\label{lemma:8}
Let $s:I\to\mathcal T(W)$ be a sequence of gallery type. For any $w\in W$, the space
$\BS_c(s,w)$
has an affine paving.
\end{lemma}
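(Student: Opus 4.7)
The plan is to first reduce the statement, via the gallerification hypothesis and the homeomorphisms~(\ref{eq:6}) and~(\ref{eq:7}), to the case where $s$ is a sequence of simple reflections, and then to obtain the affine paving of the reduced space by pulling back the Bialynicki-Birula decomposition of the Bott-Samelson variety along $\pi(t)$.

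\emph{Reduction.} Since $s$ is of gallery type, fix a gallerification $(x,t,\gamma)$ of $s$; thus $t:I\to\mathcal S(W)$ is a sequence of simple reflections, $\gamma\in\Gamma(t)$, and $t^{(\gamma)}=s^x$. The homeomorphism $d_x$ of~(\ref{eq:6}) composed with the inverse of $D_\gamma$ of~(\ref{eq:7}) (applied to $t$ and $\gamma$) yields
$$\BS_c(s,w)\cong\BS_c(s^x,xwx^{-1})=\BS_c(t^{(\gamma)},xwx^{-1})\cong\BS_c(t,xwx^{-1}\gamma^{\max}).$$
By the Iwasawa identification of Section~\ref{Isomorphism_of_two_constructions}, $\BS_c(t,w')\cong\BS(t,w')$ for every $w'\in W$. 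Hence it suffices to prove that $\BS(t,w')$ admits an affine paving for every sequence of simple reflections $t$ and every $w'\in W$.

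\emph{Pulling back the BB paving.} Fix a generic one-parameter subgroup $\lambda:\C^*\to T$. The Bialynicki-Birula theorem endows the smooth projective variety $\BS(t)$ with an affine paving by the $\lambda$-attracting cells $Y_\delta$, indexed by the $T$-fixed points $\delta\in\BS(t)^T=\Gamma(t)$; each $Y_\delta$ is an affine space on which $T$ acts linearly with nonzero $\lambda$-weights. Because $\pi(t)$ is $T$-equivariant, $\pi(t)(Y_\delta)$ is contained in the $\lambda$-attracting cell of $\pi(t)(\delta)=\delta^{\max}B$ in $G/B$, which for an appropriate choice of $\lambda$ is the Schubert cell $B\delta^{\max}B/B$. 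Thus $Y_\delta\cap\pi(t)^{-1}(w'B)$ is empty unless $\delta^{\max}=w'$, and in that case the restriction $\pi(t)|_{Y_\delta}:Y_\delta\to Bw'B/B$ is a surjective $T$-equivariant map between linear $T$-representations whose weights avoid zero, hence affine-linear, so its fibre over $w'B$ is an affine subspace of $Y_\delta$. Intersecting the closed-subset filtration of the BB paving of $\BS(t)$ with $\pi(t)^{-1}(w'B)$ and refining produces the required affine paving of $\BS(t,w')$.

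\emph{Main obstacle.} The essential technical point is the verification that $\pi(t)|_{Y_\delta}$ has affine-space fibres. I would carry this out by writing $Y_\delta$ in standard root-subgroup coordinates coming from its description as a product of cells in the $P_{t_i}/B$, and checking that in these coordinates $\pi(t)|_{Y_\delta}$ becomes a $T$-equivariant polynomial map between linear $T$-representations, from which the affine fibre structure follows by weight grading. The remaining steps---the two rewritings via~(\ref{eq:6}) and~(\ref{eq:7}), the passage through the Iwasawa identification, and the assembly of the fibres into a filtration by closed subsets---are routine.
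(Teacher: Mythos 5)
Your reduction is exactly the one in the paper: via a gallerification $(x,t,\gamma)$, the homeomorphisms (\ref{eq:6}) and (\ref{eq:7}), and the Iwasawa identification, everything comes down to showing that $\BS(t,w')=\pi(t)^{-1}(w'B)$ has an affine paving for a sequence of simple reflections $t$ and $w'\in W$. At that point the paper simply cites \cite[Proposition 2.1]{Haerterich}, whereas you attempt to reprove that statement by intersecting the Bialynicki-Birula cells $Y_\delta$ of $\BS(t)$ with the fibre. The statement you need (each nonempty $Y_\delta\cap\pi(t)^{-1}(w'B)$ is an affine space, and these assemble into a paving) is true and is precisely H\"arterich's result, but your justification of it contains a genuine gap.

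The problematic step is ``a surjective $T$-equivariant map between linear $T$-representations whose weights avoid zero is affine-linear, so its fibre over the fixed point is an affine subspace.'' This is false as a general principle: equivariance only forces each target coordinate to be a sum of monomials whose total weight equals the target weight, and since a sum of roots can again be a root, higher-degree monomials genuinely occur. For example $z\mapsto z^2$ from the weight-$\alpha$ line to the weight-$2\alpha$ line, or $(x,y)\mapsto xy$ from weights $(\alpha,\beta)$ to weight $\alpha+\beta$, are equivariant with all weights nonzero and not affine-linear; the fibre of the latter over the origin is a union of two coordinate lines, not an affine subspace. This is not a hypothetical worry here: writing $\pi(t)$ in root-subgroup coordinates and pushing the $x_{\beta_i}(a_i)$ past each other produces, via the Chevalley commutator formula, terms $x_{\beta_i+\beta_j}(\pm a_ia_j)$, so the map is genuinely nonlinear. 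All that formally follows from equivariance is that $Y_\delta\cap\pi(t)^{-1}(w'B)$ is a closed $T$-stable cone in $Y_\delta$, and ``follows by weight grading'' does not upgrade this to an affine space. To close the gap you would need the actual inductive argument of \cite{Haerterich} (using the $\mathbb{P}^1$-bundle $\BS(t)\to\BS(t')$ and analysing how each fibre $\mathbb{P}^1$ meets $\pi(t)^{-1}(w'B)$: in a point, an affine line, or the whole $\mathbb{P}^1$), or simply cite \cite[Proposition 2.1]{Haerterich} as the paper does.
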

\begin{proof}
%
Let $(x,t,\gamma)$ be a gallerification of $s$. We have $t^{(\gamma)}=s^x$.
Applying~(\ref{eq:6}) and~(\ref{eq:7}), we get the homeomorphisms
$$
\BS_c(s,w)\cong\BS_c(s^x,xwx^{-1})=\BS_c(t^{(\gamma)},xwx^{-1})\cong\BS_c(t,xwx^{-1}\gamma^{\max})\cong\BS(t,xwx^{-1}\gamma^{\max}).
$$
The last space has an affine paving by~\cite[Proposition 2.1]{Haerterich}.
\end{proof}

Let $r\in R$ and $r^1,\ldots,r^\mathfrak q$ be the maximal (with respect to the inclusion)
elements of those elements of $R$ that are strictly contained in $r$. We write them in the increasing order
$r^1<\cdots<r^\mathfrak q$.
We set
$$
I(r,R)=[r]\setminus\bigcup_{r'\in R\atop r'\subsetneq r}[r']=[r]\setminus\bigcup_{m=1}^{\mathfrak q}[r^m].
$$
We define the sequence  $s^{(r,v)}:I(r,R)\to\mathcal T(W)$ by
$$
s^{(r,v)}_i=v_{r^1}\cdots v_{r^m}s_i(v_{r^1}\cdots v_{r^m})^{-1},
$$
where $r_2^m<i<r_1^{m+1}$. Here we suppose as usually that $r_2^0=-\infty$ and $r_1^{\q+1}=+\infty$.

\begin{definition}\label{definition:3}
Let $s$ be a sequence of reflections on $I$ and $v$ be a map from a nested structure $R$ on $I$
to $W$. We say that the pair $(s,v)$ is {\it of gallery type} if for any $r\in R$,
the sequence $s^{(r,v)}$ is of gallery type.
\end{definition}
Note that if $R=\emptyset$, then $(s,v)$ is always of gallery type, and if $R=\{\span I\}$,
then $(s,v)$ is of gallery type if and only if $s$ is so.

\begin{lemma}\label{lemma:7}
If $(s,v)$ is of gallery type, then $(s^F,v^F)$ and $(s_{f^1},v_{f^1}),\ldots,(s_{f^{\n}},v_{f^{\n}})$
are also of gallery type.
\end{lemma}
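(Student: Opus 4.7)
The plan is to verify, for each $r$ in the appropriate nested structure, that the relevant defining sequence agrees with $s^{(r,v)}$ up to conjugation by an element of $W$. Since gallery type is preserved under conjugation (by the remark following Definition~\ref{definition:1}) and $s^{(r,v)}$ is of gallery type by hypothesis, this will yield the lemma.

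The pairs $(s_{f^m}, v_{f^m})$ are immediate: for $r \in R_{f^m}$, any $r' \in R$ with $r' \subsetneq r \subseteq f^m$ automatically lies in $R_{f^m}$, so the maximal elements of $R_{f^m}$ strictly contained in $r$ coincide with those $r^1,\ldots,r^{\q}$ of $R$. Hence $I(r, R_{f^m}) = I(r, R)$, and since $s_{f^m}$ and $v_{f^m}$ are just the restrictions of $s$ and $v$, the definition yields $(s_{f^m})^{(r, v_{f^m})}_i = s^{(r,v)}_i$ on this set.

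The main case is $(s^F, v^F)$; fix $r \in R^F$. I first establish the combinatorial facts: (i) $r_1, r_2 \in I^F$, $I(r, R^F) = I(r, R)$, and this set is contained in $I^F$; (ii) the maximal elements of $R^F$ strictly contained in $r$ are precisely those $r^l$ with $r^l \notin F$; (iii) every $f \in F$ with $[f] \subseteq [r]$ either is one of the $r^l \in F$ or lies strictly inside a unique $r^l \notin F$, by nesting of $R$, maximality of the $r^l$, and pairwise disjointness of the intervals in $F$. Writing $r^{l_1} < \cdots < r^{l_p}$ for the elements in (ii), it follows from (iii) that the only $f \in F$ with $r^{l_j}_2 < f_2 < r^{l_{j+1}}_1$ are the $r^l \in F$ with $l_j < l < l_{j+1}$; hence $(v^{r^{l_j}_2})^{-1} v^{r^{l_{j+1}}_1} = \prod_{l_j < l < l_{j+1}} v_{r^l}$.

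Setting $W_0 = v^{r_1}$, and $Q_l = \prod_{f \in F,\, f \subsetneq r^l} v_f$ when $r^l \notin F$ or $Q_l = v_{r^l}$ when $r^l \in F$, facts (i)--(iii) give $v^i = W_0 Q_1 \cdots Q_m$ for $i \in I(r, R^F)$ with $r^m_2 < i < r^{m+1}_1$. Telescoping the product of $v^F_{r^{l_j}} = v^{r^{l_j}_1} v_{r^{l_j}} (v^{r^{l_j}_2})^{-1}$ over those $l_j$ preceding $i$ then yields
\[
v^F_{r^{l_1}} \cdots v^F_{r^{l_k}} = W_0 (v_{r^1} \cdots v_{r^m})(v^i)^{-1},
\]
where $l_k$ is the largest index in $\{l_1, \ldots, l_p\}$ with $l_k \le m$. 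Substituting into the definition of $(s^F)^{(r,v^F)}_i$ and using $s^F_i = v^i s_i (v^i)^{-1}$, the internal $v^i$ factors cancel, giving $(s^F)^{(r,v^F)}_i = W_0\, s^{(r,v)}_i\, W_0^{-1}$, so $(s^F)^{(r,v^F)}$ is conjugate to $s^{(r,v)}$. The main obstacle is the bookkeeping in (i)--(iii), particularly handling the case when some $r^l$ itself belongs to $F$, which alters the indexing when passing from maximal elements of $R$ to those of $R^F$; once this positional picture is nailed down, the telescoping is essentially mechanical.
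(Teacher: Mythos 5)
Your proposal is correct and follows essentially the same route as the paper: in both cases you reduce to showing that $(s_{f^m})^{(r,v_{f^m})}$ equals $s^{(r,v)}$ on the nose and that $(s^F)^{(r,v^F)}$ is the conjugate of $s^{(r,v)}$ by $v^{r_1}=v_{f^1}\cdots v_{f^l}$ (the paper's conjugating element), then invoke invariance of gallery type under conjugation. The combinatorial bookkeeping you isolate in (i)--(iii) — identifying the maximal elements of $R^F$ inside $r$ with the $r^l\notin F$ and interleaving the $f$'s accordingly — is exactly the paper's identification of $(f^{l+1},\ldots,f^{a_1},r^{i_1},\ldots,f^m)$ with $(r^1,\ldots,r^{\mathfrak q})$, followed by the same telescoping computation.
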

\begin{proof}First let us prove that $(s^F,v^F)$ is of gallery type. Let $r\in R^F$, the pairs $r^1,\ldots,r^\mathfrak q$
be chosen as above and $l$ and $m$ be numbers such that
$$
f^l_2<r_1<f^{l+1}_1, \quad f^m_2<r_2<f^{m+1}_1.
$$

Let $r^{i_1},\ldots,r^{i_{\mathfrak p}}$, where $i_1<\cdots<i_{\mathfrak p}$,
be those elements of $r^{1},\ldots,r^{\mathfrak q}$ that do not belong to $F$.
They
are all maximal elements of $R^F$ strictly contained in $r$.
It is convenient to set $i_0=0$ and $i_{\p+1}=\q+1$.

For any $t=1,\ldots,\mathfrak p$,
we choose the numbers $a_t$ and $b_t$ so that
$$
f^{a_t}_2<r_1^{i_t}<f^{a_t+1}_1,\qquad f^{b_t}_2<r_2^{i_t}<f^{b_t+1}_1.
$$
Note that $I(r,R)=I^F(r,R^F)$ and
$$
(f^{l+1},\ldots,f^{a_1},r^{i_1},f^{b_1+1},\ldots,f^{a_2},r^{i_2},f^{b_2+1},\ldots,r^{(i_{\mathfrak p})},f^{b_{\mathfrak p}+1},\ldots,f^m)
=(r^{1},\ldots,r^{\mathfrak q}),
$$
where $a_1=m$ if $\p=0$. 
Now we take any $i\in I^F(r,R^F)$. First choose $h=0,\ldots,\p$ so that $r^{i_h}_2<i<r^{i_{h+1}}_1$.
Then let $j=0,\ldots,\n$ be such that $f^j_2<i<f^{j+1}_1$.
Note that $b_h\le j\le a_{h+1}$. We get
$$
(s^F)^{(r,v^F)}_i=v^F_{r^{i_1}}\cdots v^F_{r^{i_h}}s^F_i(v^F_{r^{i_1}}\cdots v^F_{r^{i_h}})^{-1}.
$$
We can compute the needed ingredients of this formula as follows:
\begin{multline*}
v^F_{r^{i_1}}\cdots v^F_{r^{i_h}}=v_{f^1}\cdots v_{f^{a_1}}v_{r^{i_1}}(v_{f^1}\cdots v_{f^{b_1}})^{-1}
v_{f^1}\cdots v_{f^{a_2}}v_{r^{i_2}}(v_{f^1}\cdots v_{f^{b_2}})^{-1}\times\\
\cdots v_{f^1}\cdots v_{f^{a_h}}v_{r^{i_h}}(v_{f^1}\cdots v_{f^{b_h}})^{-1}\\
=v_{f^1}\cdots v_{f^{a_1}}v_{r^{i_1}}v_{f^{b_1+1}}\cdots v_{f^{a_2}}v_{r^{i_2}}\cdots
v_{r^{i_h}}(v_{f^1}\cdots v_{f^{b_h}})^{-1},\\
\shoveleft{
s^F_i=v_{f^1}\cdots v_{f^j}s_i(v_{f^1}\cdots v_{f^j})^{-1}.}\\
\end{multline*}
As a result, we get
$$
(s^F)^{(r,v^F)}_i=v_{f^1}\cdots v_{f^l}s^{(r,v)}_i(v_{f^1}\cdots v_{f^l})^{-1}.
$$
This proves that $(s^F)^{(r,v^F)}$ is of gallery type.

Now let us prove that each $(s_{f^m},v_{f^m})$ is of gallery type. Note that for any $r\in R_{f^m}$
the pairs $r^{1},\ldots,r^{\mathfrak q}$ are all maximal pairs of $R_{f^m}$ strictly contained in $r$ and
$[f^m](r,R_{f^m})=I(r,R)$.
Hence we get $(s_{f^m})^{(r,v_{f^m})}=s^{(r,v)}$ and this sequence is of gallery type.
\end{proof}

\begin{corollary}\label{corollary:1}
If $(s,v)$ is of gallery type, then $\BS_c(s,v)$ has an affine paving.
\end{corollary}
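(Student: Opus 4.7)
The plan is to induct on $|I|$. The base case $|I|=0$ is trivial, since then $\BS_c(s,v)$ is either empty or a single point. For the inductive step with $|I|>0$, I split into three subcases according to the shape of $R$.

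If $R=\emptyset$, then $\BS_c(s,v)=\BS_c(s)$ is a Bott tower: the ``forget the last coordinate'' map $\BS_c(s)\to\BS_c(s')$ is a fibre bundle with fibre $C_{s_{\max I}}/K\cong\C P^1$, so by the inductive hypothesis applied to the (trivially gallery-type) pair $(s',\emptyset)$, together with the obvious paving $\{\pt\}\subset\C P^1$, Proposition~\ref{proposition:1} delivers an affine paving. If $R=\{\span I\}$, then there are no elements of $R$ strictly inside $\span I$, so the assumption that $(s,v)$ is of gallery type unpacks to the statement that $s^{(\span I,v)}=s$ is itself of gallery type; Lemma~\ref{lemma:8} then provides an affine paving of $\BS_c(s,v)=\pi_c(s)^{-1}(v_{\span I}K)$.

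The main case is when $R\ne\emptyset$ and $R\ne\{\span I\}$. I take $F$ to be the set of maximal elements of $R\setminus\{\span I\}$: this set is nonempty, its intervals are pairwise disjoint by the nested-structure axioms, and every $f\in F$ satisfies $[f]\subsetneq I$, hence $|[f]|<|I|$. Theorem~\ref{theorem:1} then yields a fibre bundle $p^F:\BS_c(s,v)\to\BS_c(s^F,v^F)$ with fibre $\BS_c(s_{f^1},v_{f^1})\times\cdots\times\BS_c(s_{f^{\n}},v_{f^{\n}})$. Direct inspection of the defining containments shows that $R^F=\{\span I\}$ if $\span I\in R$, and $R^F=\emptyset$ otherwise. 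In the first alternative, Lemma~\ref{lemma:7} gives $(s^F,v^F)$ of gallery type; since $R^F$ has no proper subelements this forces $s^F$ itself to be of gallery type, so Lemma~\ref{lemma:8} supplies the affine paving of $\BS_c(s^F,v^F)=\pi_c(s^F)^{-1}(v^F_{\span I}K)$. In the second alternative the base is the Bott tower $\BS_c(s^F)$, already handled by the $R=\emptyset$ subcase. For each fibre factor, Lemma~\ref{lemma:7} ensures that $(s_{f^i},v_{f^i})$ is of gallery type, and $|[f^i]|<|I|$ allows the inductive hypothesis to apply. Iterated use of Proposition~\ref{proposition:1} on trivial bundles gives an affine paving of the product, and a final application of Proposition~\ref{proposition:1} to $p^F$ completes the argument.

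The principal obstacle is precisely the presence of $\span I$ in $R$: the naive choice $F=\{\text{maximal elements of }R\}$ degenerates in that case to $F=\{\span I\}$, for which Theorem~\ref{theorem:1} returns the tautological fibration $\BS_c(s,v)\to\pt$ with fibre $\BS_c(s,v)$ and no reduction in complexity. Peeling off $\span I$, pushing its constraint into the base of $p^F$, and disposing of it only there via Lemma~\ref{lemma:8} is the key manoeuvre that makes the induction go through.
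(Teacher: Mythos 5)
Your proof is correct and follows essentially the same route as the paper: reduce via Theorem~\ref{theorem:1}, Proposition~\ref{proposition:1} and Lemma~\ref{lemma:7} to the two base cases $R=\emptyset$ (Bott tower) and $R=\{\span I\}$ (Lemma~\ref{lemma:8}). The paper inducts on the cardinality of $R$ and leaves the choice of $F$ implicit, whereas you induct on $|I|$ and take $F$ to be the maximal elements of $R\setminus\{\span I\}$ --- a clean and correct way of making explicit the reduction the paper only sketches.
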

\begin{proof}
Arguing by induction on the cardinality of $R$, applying Theorem~\ref{theorem:1}, Proposition~\ref{proposition:1} and
Lemma~\ref{lemma:7}, it suffices to consider the cases $R=\emptyset$
and $R=\{\span I\}$. It the first case $\BS_c(s,v)=\BS_c(s)$. This space has an affine paving
as a Bott tower~\cite[Proposition 3.10]{GK}.
In the second case, we have
$\BS_c(s,v)=\BS_c(s,v_{\span I})$.
This space has an affine paving by Lemma~\ref{lemma:8}.
\end{proof}


\section{Equivariant cohomology and categories of Bott-Samelson varieties}\label{Equivariant cohomology}

\subsection{Definitions}\label{Def_equiv}
Let
$p_T:E_T\to B_T$
be a universal principal $T$-bundle. For any $T$-space $X$, we consider the {\it Borel construction}
$X\times_TE_T=(X\times E_T)/T$, where $T$ acts on the Cartesian product diagonally: $t(x,e)=(tx,te)$.
Then we define the {\it $T$-equivariant cohomology of $X$ with coefficients $\k$} by
$$
H^\bullet_T(X,\k)=H^\bullet(X\times_TE_T,\k).
$$
This definition a priory depends on the choice of a universal principal $T$-bundle.
However, all these cohomologies are isomorphic. Indeed let
$p'_T:E'_T\to B'_T$
be another universal principal $T$-bundle. We consider the following diagram (see~\cite[1.4]{Jantzen}):
\begin{equation}\label{eq:bt6:2.5}
\begin{tikzcd}
{}&(X\times E_T\times E'_T)/T\arrow{rd}{p_{13}/T}\arrow{ld}[swap]{p_{12}/T}&\\
X\times_TE_T&&X\times_TE'_T
\end{tikzcd}
\end{equation}
where the $T$ acts diagonally on $X\times E_T\times E'_T$. 
As $p_{12}/T$ and $p_{13}/T$ are fibre bundles with fibres $E'_T$
and $E_T$ respectively, we get by the Vietoris-Begle mapping theorem (see Lemma~\ref{lemma:coh:3})
the following diagram for cohomologies:
\begin{equation}\label{eq:bt6:2}
\begin{tikzcd}
{}&H^\bullet((X\times E_T\times E'_T)/T,\k)&\\
H^\bullet(X\times_TE_T,\k)\arrow{ru}[swap]{\sim}{(p_{12}/T)^*}&&H^\bullet(X\times_TE'_T,\k)\arrow{lu}{\sim}[swap]{(p_{13}/T)^*}
\end{tikzcd}
\end{equation}
It allows us to identify the cohomologies $H^\bullet(X\times_TE_T,\k)$ and $H^\bullet(X\times_TE'_T,\k)$.
The reader can easily check that this identification respects composition and
is identical if both universal principal bundles are equal.
%

Similar constructions are possible for the compact torus $K$: for a $K$-space $X$, we define
$$
H^\bullet_K(X,\k)=H^\bullet(X\times_KE_K,\k),
$$
where
$p_K:E_K\to B_K$
is a universal principal $K$-bundle. There is the following choice of universal principal
$T$- and $K$-bundles that allows us to identify $T$- and $K$-equivariant cohomologies.
Let $T\cong(\C^\times)^d$ for the corresponding $d$. Then $K\cong(S^1)^d$. We consider the space
$\mathcal E_T=(\C^\infty\setminus\{0\})^d$
and its subspace $\mathcal E_K=(S^\infty)^d$, where
$$
\C^\infty=\dlim\C^n,\quad S^\infty=\dlim S^n
$$
($S^n$ denotes the $n$-dimensional sphere). Let us also consider the space
$$
\mathcal B=(\C P^\infty)^d=\dlim(\C P^n)^d
$$
together with the natural maps $\mathcal E_T\to B$ and $\mathcal E_K\to\mathcal B$.
They
are universal principal $T$- and $K$-bundles respectively.
For each $T$-space $X$, the spaces $X\times_T\mathcal E_T$ and $X\times_K\mathcal E_K$ are homeomorphic~\cite[1.6]{Jantzen}.
Hence $H^\bullet_T(X,\k)=H^\bullet_K(X,\k)$.


\subsection{Stiefel manifolds}\label{Stiefel manifolds} The universal principal $K$-bundle $\mathcal E_K\to\mathcal B$ considered above is a classical
choice for calculating $K$-equivariant cohomologies. However, it has the following disadvantage: we do not know how to extend
the action of $K$ on $\mathcal E_K$ to a continuous action of the maximal compact subgroup $C$.

The solution to this problem is to consider an embedding of $C$ into a unitary group and
take a universal principal for this group. We explain this construction in a little more detail.

Being a Chevalley group, the group $G$ admits an embedding $G\le\GL(V)$
for some faithful representation $V$ of the Lie algebra of $G$. Let $V=V_1\oplus\cdots\oplus V_k$
be a decomposition into a direct sum of irreducible $G$-modules. By~\cite[Chapter 12]{Steinberg}, there exist for each $i$ a positive
definite Hermitian form $\<,\>_{V_i}$ on $V_i$ such that $\<gu,v\>_{V_i}=\<u,\sigma(g^{-1})v\>_{V_i}$ for any $x\in G$
and $u,v\in V_i$, where $\sigma$ is the automorphism of $G$ defined in Section~\ref{Compact subgroups of complex algebraic groups}.
Their direct sum $\<,\>_{V}$ is a positive definite Hermitian form on $V$ satisfying the same property.
We consider the unitary group
$$
U(V)=\{g\in\GL(V)\suchthat\<gu,gv\>_{V}=\<u,v\>_V\;\forall u,v\in{V}\},
$$
for which we have $U(V)\cap G=C$. Choosing an orthonormal basis $(v_1,\ldots,v_{\mathfrak r})$ of $V$,
we get an isomorphism $U(V)\stackrel\sim\to U(\mathfrak r)$ taking an operator of $U(V)$ to its matrix in this basis.

For any natural number $N\ge\r$,
we consider the {\it Stiefel manifold}\footnote{This space is usually denoted by $V_\r(\C^N)$ or $\C V_{N,\r}$.
We also transpose matrices, as we want to have a left action of $U(\r)$.}
$$
E^N=\{A\in M_{\r,N}(\C)\suchthat A\bar A^T=I_\r\},
$$
where $M_{\r,N}(\C)$ is the space of $\r\times N$ matrices with respect to metric topology and $I_\r$ is the identity
matrix. The group $U(\r)$ of unitary $\r\times \r$ matrices acts on $E^N$ on the left by multiplication.
Similarly, $U(N)$ acts on $E^N$ on the right. The last action is transitive and both actions commute.
The quotient space $\Gr^N=E^N/U(\r)$ is called a {\it Grassmanian} and the corresponding
quotient map $E^N\to\Gr^N$ is a principal $U(\r)$-bun\-dle. Note that the group $U(N)$ also acts on $\Gr^N$
by the right multiplication. For $N'>N$, we get the embedding $E^N\hookrightarrow E^{N'}$ by adding $N'-N$
zero columns to the right.

Taking the direct limits
$$
E^\infty=\dlim E^N,\quad \Gr=\dlim\Gr^N,
$$
we get a universal principal $U(\r)$-bundle $E^\infty\to\Gr$.

We need the spaces $E^N$ to get the principal $K$-bundles $E^N\to E^N/K$.
It is easy to note that this bundle for $N=\infty$ is the direct limit of the bundles for $N<\infty$.

\subsection{Equivariant cohomology of a point}\label{Equivariant_cohomology_of_a_point}
We denote by $S=H^\bullet_T(\pt,\k)$ the equivariant cohomology of a point.
It is well known that $S$ is a polynomial ring with zero odd degree component. More exactly, let $\mathfrak X(T)$ be the group of all
continuous homomorphisms $T\to\C^\times$. For each $\lm\in\mathfrak X(T)$, let $\C_\lm$ be the $\C T$-module that is equal
to $\C$ as a vector space and has the following  $T$-action: $tc=\lm(t)c$.
Then we have the line bundle $\C_\lm\times_T E_T\to B_T$ denoted by $\mathcal L_T(\lm)$,
where $E_T\to B_T$ is a universal principal $T$-bundle. We get the map ${\mathfrak X}(T)\to H^2_T(\pt)=H^2(B_T,\k)$ given by
$\lm\mapsto c_1(\mathcal L_T(\lm))$, where $c_1$ denotes the first Chern class,
which extends to the isomorphism with the symmetric algebra:
\begin{equation}\label{eq:pt:1}
\mathop{\mathrm{Sym}}({\mathfrak X}(T)\otimes_\Z\C)\ito S.
\end{equation}

Similarly, let $\mathfrak X(K)$ be the group of continuous homomorphisms $K\to\C^\times$.
For each $\lm\in\mathfrak X(K)$, we have the bundle $\mathcal L_K(\lm)$ similar to $\mathcal L_T(\lm)$.
Therefore, we have the isomorphism
\begin{equation}\label{eq:pt:1.5}
\mathop{\mathrm{Sym}}(\mathfrak X(K)\otimes_\Z\C)\ito S
\end{equation}
induced by $\lm\mapsto c_1(\mathcal L_K(\lm))$. In what follows, we identify $\mathfrak X(T)$ with $\mathfrak X(K)$
via the restriction. Then both isomorphisms~(\ref{eq:pt:1}) and~(\ref{eq:pt:1.5}) become equal.
Note that the Weyl group $W$ acts on $\mathfrak X(K)$ and $\mathfrak X(T)$ by $(w\lm)(t)=\lm(\dot w^{-1}t\dot w)$.

We are free to choose a universal principal $K$-bundle $E_K\to B_K$ to compute $S=H^\bullet(B_K,\k)$.
We assume that the $K$-action on $E_K$ can be extended to a continuous $C$-action.
The quotient map $E^\infty\to E^\infty/K$ is an example of such a bundle.
The map $\rho_w:E_K\to E_K$ defined by $\rho_w(e)=\dot we$ factors through the action
of $K$ and we get the map $\rho_w/K:E_K/K\to E_K/K$.
This map induces the ring homomorphism $(\rho_w/K)^*:S\to S$.
It is easy to check that the pullback of $\mathcal L_K(\lm)$ along $\rho_w/K$ is $\mathcal L(w^{-1}\lambda)$.
Therefore under identification~(\ref{eq:pt:1.5}), we get
\begin{equation}\label{eq:pt:2}
(\rho_w/K)^*(u)=w^{-1}u
\end{equation}
for any $u\in S$.

For a finite space $X$ with the discrete topology and trivial action of $K$,
we identify $H^\bullet_T(X,\k)$
with $S(X)$.
More exactly, let $x\in X$ be an arbitrary point. Consider the map $j_x:E_K/K\to X\times_KE_K$
given by $j_x(Ke)=K(x,e)$. Then any element $h\in H^n_K(X,\k)$ is identified with the function
$x\mapsto j_x^*h$. A similar identification is possible for a finite discrete space $X$ with the trivial action of $T$.

\subsection{Categories of Bott-Samelson varieties}\label{Categories of Bott-Samelson varieties}
We are going to recall the definition of the folding category $\widetilde\Seq$ from~\cite{CBSV}.
The objects of this category are sequences (on the initial intervals of natural numbers) of simple reflections.
Each morphism $(s_1,\ldots,s_n)\to(\tilde s_1,\ldots,\tilde s_{\tilde n})$ is a triple $(p,w,\phi)$ such that

\begin{enumerate}\itemsep=6pt
\item\label{fcat:1} $p:\{1,\ldots,n\}\to\{1,\ldots,\tilde n\}$ is a monotone embedding;
\item\label{fcat:2}  $w\in W$;
\item\label{fcat:3}  $\phi:\Gamma(s)\to\Gamma(\tilde s)$ is a map such that
      \begin{equation}\label{eq:bt9:1}
      \phi(\gamma)^{p(i)}\tilde s_{p(i)}(\phi(\gamma)^{p(i)})^{-1}=w\gamma^is_i(\gamma^i)^{-1}w^{-1},
      \quad
      \phi(\f_i\gamma)=\f_{p(i)}\phi(\gamma)
      \end{equation}
      for any $\gamma\in\Gamma(s)$ and $i=1,\ldots,n$.
\end{enumerate}
In the last property, we used the folding operators $\f_i$ introduced in Section~\ref{Galleries}.

Under some restrictions on the ring of coefficients, each morphism $(p,w,\phi)$
induces the map $\widetilde H((p,w,\phi))$ between $T$-equivariant cohomologies
of Bott-Samelson varieties, see~\cite[Section 3.5]{CBSV}.
The existence of the contravariant functor $\widetilde H$ followed in~\cite{CBSV} from
Theorem~1 proved in that paper with the help of M.\,H\"arterich's criterion for the image
of the localization~\cite{Haerterich}. We are going to show how this theorem and thus the existence of $\widetilde H$ naturally follow
from the results obtained in this paper.

We consider here the following special case of the constructions of Section~\ref{Nested_fibre_bundles}:
$F=R$ and for any $r\in R$, we have $r_1=r_2$ and $v_r=1$ or $v_r=s_{r_1}$.
In this case, it follows from Theorem~\ref{theorem:1} that the map
$$
p^R:\BS_c(s,v)\to\BS_c(s^R,v^R)=\BS_c(s^R)
$$
is a homeomorphism, as each space $\BS_c(s_{f^m},v_{f^m})$ 
consists of the only point $[s_{f^m_1}]$.

\begin{lemma}
Let $(p,w,\phi)$ be a morphism from 
$s=(s_1,\ldots,s_n)$ to $\tilde s=(\tilde s_1,\ldots,\tilde s_{\tilde n})$ in the category $\widetilde\Seq$.
Then there exists a continuous map $\psi:\BS_c(s)\to\BS_c(\tilde s)$
such that $\psi^K=\phi$ and $\psi(ka)=\dot wk\dot w^{-1}a$ for any $k\in K$ and $a\in\BS_c(s)$.
\end{lemma}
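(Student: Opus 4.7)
The plan is to express $\psi$ as a composition of homeomorphisms and one closed embedding, all drawn from the machinery of this section. First I introduce the nested structure $R=\{(j,j):j\notin\im(p)\}$ on $\{1,\ldots,\tilde n\}$ and define $v:R\to W$ by $v_{(j,j)}=\phi(\gamma)_j$ for any $\gamma\in\Gamma(s)$. The folding identity $\phi(\f_i\gamma)=\f_{p(i)}\phi(\gamma)$ shows that $\phi(\gamma)_j$ is independent of $\gamma$ at positions $j\notin\im(p)$, so $v$ is well-defined, with $v_{(j,j)}\in\{1,\tilde s_j\}$. Theorem~\ref{theorem:1} applied with $F=R$ then yields a homeomorphism $p^R:\BS_c(\tilde s,v)\xrightarrow{\sim}\BS_c(\tilde s^R)$, since each fibre $\BS_c(\tilde s_j,v_{(j,j)})$ is a single point.

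Reindexing $\tilde s^R$ along $p$ as a sequence on $\{1,\ldots,n\}$, I next exhibit a combinatorial gallery $\eta\in\Gamma(\tilde s^R)$ with $(\tilde s^R)^{(\eta)}=s^w$. Setting $\eta_i=v^{p(i)}\phi(\gamma_0)_{p(i)}(v^{p(i)})^{-1}$ with $\gamma_0=(1,\ldots,1)$, a straightforward induction on $i$ gives $\eta^i=\phi(\gamma_0)^{p(i)}(v^{p(i)})^{-1}$; hence
\[
(\tilde s^R)^{(\eta)}_i=\eta^i\tilde s^R_i(\eta^i)^{-1}=\phi(\gamma_0)^{p(i)}\tilde s_{p(i)}(\phi(\gamma_0)^{p(i)})^{-1}=ws_iw^{-1}
\]
by~(\ref{eq:bt9:1}) evaluated at $\gamma_0$. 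Thus $D_\eta:\BS_c(\tilde s^R)\xrightarrow{\sim}\BS_c(s^w)$, and I set
\[
\psi=\iota\circ(p^R)^{-1}\circ D_\eta^{-1}\circ d_w:\BS_c(s)\to\BS_c(\tilde s),
\]
where $\iota:\BS_c(\tilde s,v)\hookrightarrow\BS_c(\tilde s)$ is the natural closed embedding.

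The twisted equivariance follows immediately from the corresponding property of $d_w$ recorded in Section~\ref{Definition via the compact torus}, together with the $K$-equivariance of $D_\eta$, $p^R$, and $\iota$ (Lemma~\ref{lemma:3}). The remaining task is to check that $\psi^K=\phi$. For this I would trace an arbitrary $\gamma\in\Gamma(s)$ through the composition: $d_w$ sends it to $\gamma^w$, $D_\eta^{-1}$ moves this to a gallery of $\tilde s^R$, and $(p^R)^{-1}$ followed by $\iota$ re-inserts the fixed values $\dot v_{(j,j)}$ at positions $j\notin\im(p)$. The main obstacle I anticipate is this final combinatorial matching of the image with $\phi(\gamma)$; it should reduce, by induction on $i$, to the full folding identity~(\ref{eq:bt9:1}) applied to the general $\gamma$ (rather than just $\gamma_0$), in the spirit of the bookkeeping already performed in the proofs of Lemmas~\ref{lemma:3} and~\ref{lemma:5}.
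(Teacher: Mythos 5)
Your construction is essentially the paper's: the same nested structure $R=\{(j,j)\suchthat j\notin\im p\}$ and map $v$, the same use of Theorem~\ref{theorem:1} to make $p^R$ a homeomorphism onto $\BS_c(\tilde s^R)$, and the same two ingredients $d_w$ and a $D$-type gallery twist to identify $\BS_c(\tilde s^R p)$ with $\BS_c(s)$. The only (harmless) difference is where you place the twist: the paper chooses the unique $\bar\gamma\in\Gamma(s)$ with $\phi(\bar\gamma)$ trivial on $\im p$ and uses $D_{\bar\gamma}$ on the source, getting $\tilde s^Rp=(s^{(\bar\gamma)})^w$, whereas you build $\eta$ from $\phi(\gamma_0)$ and apply $D_\eta^{-1}$ on the target side; your computation $(\tilde s^R)^{(\eta)}=s^w$ is correct. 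The one step you leave unexecuted, $\psi^K=\phi$, does close, and slightly more cheaply than you anticipate: tracing $\gamma\in\Gamma(s)$ through your chain puts $v_{(j,j)}=\phi(\gamma)_j$ at the positions $j\notin\im p$ and, at $p(i)$, the conjugations by $v^{p(i)}$ and by $\eta^i$ collapse to give $\big(\phi(\gamma_0)^{p(i)-1}\big)^{-1}w\gamma_iw^{-1}\phi(\gamma_0)^{p(i)-1}\phi(\gamma_0)_{p(i)}$. If $\gamma_i=1$ this equals $\phi(\gamma_0)_{p(i)}$, and if $\gamma_i=s_i$ the first identity of~(\ref{eq:bt9:1}) applied \emph{only at the trivial gallery $\gamma_0$} turns it into $\phi(\gamma_0)_{p(i)}\tilde s_{p(i)}$; since the folding relation $\phi(\f_i\gamma)=\f_{p(i)}\phi(\gamma)$ together with the connectivity of $\Gamma(s)$ under foldings shows that $\phi(\gamma)_{p(i)}$ equals $\phi(\gamma_0)_{p(i)}$ or $\phi(\gamma_0)_{p(i)}\tilde s_{p(i)}$ according as $\gamma_i=1$ or $\gamma_i=s_i$, this is exactly $\phi(\gamma)_{p(i)}$. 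So no appeal to~(\ref{eq:bt9:1}) at general $\gamma$ is needed, and your proof is complete once this two-line case check is written out.
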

\begin{proof} We set
$$
I=\{1,2,\ldots,\tilde n\},\quad R=\{(i,i)\suchthat i\in I\setminus\im p\}.
$$
We get $I^R=\im p$.
We define the map $v:R\to W$ by $v_{(i,i)}=\phi(\gamma)_i$ where $\gamma$ is an arbitrary element of $\Gamma(s)$.
This definition makes no confusion, as by the second formula of~(\ref{eq:bt9:1}), the element
$\phi(\gamma)_i$ does not depend on the choice of $\gamma$ if $i\notin\im p$.

Let us compute the sequence $\tilde s^R$. Let $\bar\gamma\in\Gamma_s$ be such that $\phi(\bar\gamma)_i=1$ for any
$i\in\im p$. This gallery exists and is unique. 
Then we have $\phi(\bar\gamma)^i=v^i$ for any $i\in\im p$.
By the first equation of~(\ref{eq:bt9:1}), we get
$$
\tilde s^R_{p(i)}=
v^{p(i)}\tilde s_{p(i)}(v^{p(i)})^{-1}=
\phi(\bar\gamma)^{p(i)}\tilde s_{p(i)}(\phi(\bar\gamma)^{p(i)})^{-1}
=w\bar\gamma^is_i(\bar\gamma^i)^{-1}w^{-1}=ws^{(\bar\gamma)}_iw^{-1}.
$$
Thus $\tilde s^Rp=(s^{(\bar\gamma)})^w$.

We have the following homeomorphisms:
\begin{equation}\label{eq:3}
\begin{tikzcd}[column sep=3.5ex]
\BS_c(\tilde s,v)\arrow{r}{p^R}[swap]{\sim}&\BS_c(\tilde s^R)\arrow{r}[swap]{\sim}&\BS_c(\tilde s^Rp)=\BS_c((s^{(\bar\gamma)})^w)&\arrow{l}{\sim}[swap]{d_w}\BS_c(s^{(\bar\gamma)})&\arrow{l}{\sim}[swap]{D_{\bar\gamma}}\BS_c(s).
\end{tikzcd}
\!\!\!\!
\end{equation}
As $\BS_c(\tilde s,v)\subset\BS_c(\tilde s)$, we obtain the embedding $\psi:\BS_c(s)\to\BS_c(\tilde s)$,
reading the above diagram from right to left.
Clearly, $\psi(ka)=\dot wk\dot w^{-1}\psi(a)$ for any $a\in\BS_c(s)$ and $k\in K$, as the same property holds for $d_w$
and the other isomorphisms are $K$-equivariant.
So we can consider the restriction
$\psi^K:\Gamma(s)\to\Gamma(\tilde s)$.
Let us prove that $\psi^K=\phi$.
Let $\gamma\in\Gamma(s)$. Then by the definition of $v$, we get that $\phi(\gamma)$ belongs to $\BS_c(\tilde s,v)^K$
and is $F$-balanced (under the identifications of Section~\ref{Definition via the compact torus}).
Let us write how the galleries $\phi(\gamma)$ and $\gamma$ are mapped
towards each other in diagram~(\ref{eq:3}) as follows:
$$
\begin{tikzcd}
\phi(\gamma)\arrow[mapsto]{r}&\delta\arrow[mapsto]{r}&\lambda,\quad \mu&\arrow[mapsto]{l}\rho&\arrow[mapsto]{l}\gamma.
\end{tikzcd}
$$
Our aim is obviously to prove that $\lm=\mu$. To this end, take an arbitrary $i=1,\ldots,n$.
Then
$$
\lm_i=\delta_{p(i)}=\phi(\gamma)_{p(i)}^F=v^{p(i)}\phi(\gamma)_{p(i)}(v^{p(i)})^{-1}
=\phi(\bar\gamma)^{p(i)}\phi(\gamma)_{p(i)}\big(\phi(\bar\gamma)^{p(i)}\big)^{-1}.
$$
and
$$
\mu_i=w\rho_iw^{-1}=w\bar\gamma^{i-1}\gamma_i(\bar\gamma^i)^{-1}w^{-1}.
$$

If $\phi(\gamma)_{p(i)}=1$, then $\gamma_i=\bar\gamma_i$ and therefore $\mu_i=1=\lm_i$.
Suppose now that $\phi(\gamma)_{p(i)}=\tilde s_{p(i)}$. Then $\gamma_i=\bar\gamma_is_i$.
We get
$$
\lm_i=\phi(\bar\gamma)^{p(i)}\tilde s_{p(i)}\big(\phi(\bar\gamma)^{p(i)}\big)^{-1},\quad
\mu_i=w\bar\gamma^is_i(\bar\gamma^i)^{-1}w^{-1}.
$$
These elements are equal by the first
equation of~(\ref{eq:bt9:1}).
\end{proof}

Now let us apply this lemma to the computation of the equivariant cohomologies.
The map $\psi\times\rho_w:\BS_c(s)\times E^\infty\to\BS_c(\tilde s)\times E^\infty$ maps $K$-orbits to $K$-orbits.
Indeed, for any $k\in K$, $a\in\BS_c(s)$ and $e\in E^\infty$, we get
\begin{multline*}
(\psi\times\rho_w)(k(a,e))=(\psi\times\rho_w)((ka,ke))=(\psi(ka),\rho_w(ke))\\
=(\dot wk\dot w^{-1}\psi(a),\dot w ke)
=\dot wk\dot w^{-1}(\psi(a),\dot we)=\dot wk\dot w^{-1}(\psi\times\rho_w)((a,e)).
\end{multline*}
Hence we get the quotient map $\psi\times_K\rho_w:\BS_c(s)\times_K E^\infty\to\BS_c(\tilde s)\times_K E^\infty$.
Similarly, we get the map $\phi\times_K\rho_w:\Gamma(s)\times_K E^\infty\to\Gamma(\tilde s)\times_K E^\infty$.
We have the following commutative diagram:
\begin{equation}\label{eq:4}
\begin{tikzcd}
[column sep=15ex]
H^\bullet_K(\BS_c(s),\k)\arrow{d}&H^\bullet_K(\BS_c(\tilde s),\k)\arrow{d}\arrow{l}[swap]{(\psi\times_K\rho_w)^*}\\
H^\bullet_K(\Gamma(s),\k)&H^\bullet_K(\Gamma(\tilde s),\k)\arrow{l}[swap]{(\phi\times_K\rho_w)^*}
\end{tikzcd}\!\!\!
\end{equation}
where the vertical arrow are restrictions. In the rest of this section, we identify
$K$- and $T$-equivarinat cohomologies.

We can now give a different proof of Theorem~1 from~\cite{CBSV}.
To do it, we need to compute the map $(\phi\times_K\rho_w)^*$ in the bottom arrow:
\begin{multline*}
(\phi\times_K\rho_w)^*(g)(\gamma)=
j_\gamma^*(\phi\times_K\rho_w)^*(g)=
((\phi\times_K\rho_w)j_\gamma)^*(g)
=(j_{\phi(\gamma)}(\rho_w/K))^*(g)\\
=(\rho_w/K)^*j_{\phi(\gamma)}^*(g)=
(\rho_w/K)^*(g(\phi(\gamma)))
=w^{-1}g(\phi(\gamma)),
\end{multline*}
where we applied~(\ref{eq:pt:2}) to obtain the last equality.
Thus in the notation of~\cite[Theorem~1]{CBSV}, we get
$$
(\phi\times_K\rho_w)^*(g)=g_{(p,w,\phi)}.
$$
We have just given another prove of this theorem: if $g$ is in the image of the right vertical arrow
of~(\ref{eq:4}), then $g_{(p,w,\phi)}$ is in the image of
the left vertical arrow. Note that we did not impose in this prove any restrictions on
the commutative ring $\k$.

We can identify the top arrow of~(\ref{eq:4}) with the map $\widetilde H((p,w,\phi))$
from~\cite[Section 3.5]{CBSV} under some conditions on $\k$
and the root system implying the localization theorem.

The category $\widetilde\Seq_{\mathbf f}$ is defined similarly to the category $\widetilde\Seq$, see~\cite[Section 5.3]{CBSV}.
The main difference is that the objects are pairs $(s,x)$, where $s$ is a sequence of simple reflections
and $x\in W$. Our preceding arguments give the following topological proof of Theorem~5 from~\cite{CBSV}.
Let $(p,w,\phi):(s,x)\to(\tilde s,\tilde x)$
be a morphism of the category $\widetilde\Seq_{\mathbf f}$.
We assume that $\Gamma(s,x)\ne\emptyset$. Then by~\cite[Lemma~19]{CBSV}, there exists
a map $\bar\phi:\Gamma(s)\to\Gamma(\tilde s)$ such that $(p,w,\bar\phi)$ is a morphism
of $\widetilde\Seq$ and the restriction of $\bar\phi$ to $\Gamma(s,x)$ is $\phi$.
Choosing $\bar\gamma\in\Gamma(s)$ so that $\bar\phi(\bar\gamma)_i=1$ for any $i\in\im p$
and reading~(\ref{eq:3}) from right to left, we construct the map $\psi:\BS_c(s)\to\BS_c(\tilde s)$.
It is easy to check that it takes $\BS_c(s,x)$ to $\BS_c(\tilde s,\tilde x)$.
Indeed, looking at~(\ref{eq:3}), we conclude that we have to prove the equality
$\tilde x(\bar\phi(\bar\gamma)^{\max})^{-1}=wx(\bar\gamma^{\max})^{-1}w^{-1}$.
This can be done by induction, that is, we are going to prove that
\begin{equation}\label{eq:bt7:12}
\tilde x(\bar\phi(\gamma)^{\max})^{-1}=wx(\gamma^{\max})^{-1}w^{-1}
\end{equation}
for any $\gamma\in\Gamma(s)$. First, we take any $\gamma\in\Gamma(s,x)$. Then $\bar\phi(\gamma)=\phi(\gamma)\in\Gamma(\tilde s,\tilde x)$
and equality~(\ref{eq:bt7:12}) is reduced to $1=1$. Now suppose that~(\ref{eq:bt7:12}) is true for some $\gamma\in\Gamma(s)$.
Applying~(\ref{eq:bt9:1}), we get

\begin{multline*}
\tilde x(\bar\phi(\f_i\gamma)^{\max})^{-1}=\tilde x((\f_{p(i)}\bar\phi(\gamma))^{\max})^{-1}
=\tilde x(\bar\phi(\gamma)^{p(i)}\tilde s_{p(i)}(\bar\phi(\gamma)^{p(i)})^{-1}\bar\phi(\gamma)^{\max})^{-1}\\
=\tilde x(\bar\phi(\gamma)^{\max})^{-1}w\gamma^is_i(\gamma^i)^{-1}w^{-1}
=wx(\gamma^{\max})^{-1}
\gamma^is_i(\gamma^i)^{-1}w^{-1}\\
=wx(\gamma^is_i(\gamma^i)^{-1}\gamma^{\max})^{-1}
w^{-1}
=wx((\f_i\gamma)^{\max})^{-1}w^{-1}.
\end{multline*}

As we can reach any combinatorial gallery of $\Gamma(s)$ from any other combinatorial galley of this set
by successively applying the folding operators, we get~(\ref{eq:bt7:12}) for all $\gamma\in\Gamma(s)$.
Now we can prove Theorem~5 from~\cite{CBSV} exactly in the same way as we proved Theorem~1 above.

\section{Approximation by compact spaces}\label{Approximation by compact spaces}

\subsection{Vietoris-Begle theorem}

First, remember the following classical result.

\begin{proposition}[\mbox{\cite[Theorem IV.I.6]{Iversen}}]\label{proposition:coh:1}
Let $f:X\to Y$ be a fibre bundle whose fibre is homotopic to a compact Hausdorff space.
Then for each $y\in Y$, the canonical map
\begin{equation}\label{eq:coh:10}
(R^nf_*\csh{\k}{X})_y\to H^n(f^{-1}(y),\k)
\end{equation}
is an isomorphism for any $n$.
\end{proposition}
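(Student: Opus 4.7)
My plan is to compute the stalk $(R^nf_*\csh{\k}{X})_y$ directly from its description as a colimit of cohomologies over neighbourhoods of $y$ and then exploit the local triviality of $f$ to reduce to the cohomology of a product, where homotopy invariance finishes the job.

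First, I would recall the general formula for stalks of higher direct images of a sheaf: for any $y\in Y$,
$$
(R^nf_*\csh{\k}{X})_y \;=\; \dlim_{U\ni y} H^n(f^{-1}(U),\k),
$$
where $U$ ranges over open neighbourhoods of $y$ directed by reverse inclusion, and the canonical map of the statement is the one induced by the restrictions $f^{-1}(U)\to f^{-1}(y)$. Since $f$ is a fibre bundle, the neighbourhoods of $y$ over which $f$ trivialises are cofinal in this directed system, so I may restrict the colimit to such neighbourhoods. For each trivialising $U$, I choose a homeomorphism $f^{-1}(U)\cong U\times F$ over $U$, where $F$ is the fibre.

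Second, I would refine the cofinal system further to open neighbourhoods $U$ that are contractible, so that $U\times F$ is homotopy equivalent to $F$ via the second projection. The key point is that the projection $U\times F\to F$ induces an isomorphism $H^n(F,\k)\ito H^n(U\times F,\k)$ on sheaf cohomology; this is exactly where the hypothesis that $F$ be homotopic to a compact Hausdorff space enters, since homotopy invariance of sheaf cohomology with constant coefficients is available in this setting (for instance through the agreement of sheaf cohomology with \v{C}ech cohomology on compact Hausdorff, or more generally paracompact, spaces). Composing with the homotopy equivalence $F\simeq f^{-1}(y)$, I identify $H^n(U\times F,\k)$ with $H^n(f^{-1}(y),\k)$, naturally in $U$.

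Third, I verify that the transition maps in the colimit are compatible with these identifications. This is transparent once one notes that inclusion of one contractible trivialising neighbourhood into a larger one is, up to chosen trivialisations, the identity on the $F$-factor, so the induced map on $H^n(F,\k)$ is the identity. Consequently the colimit collapses to $H^n(f^{-1}(y),\k)$, and the canonical map of~(\ref{eq:coh:10}) is an isomorphism.

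The main obstacle is the second step: homotopy invariance of sheaf cohomology on a product with a general topological space $F$ is not automatic, and one truly needs some tameness of $F$ to justify replacing $U\times F$ by $F$ on cohomology. The hypothesis that $F$ be homotopic to a compact Hausdorff space is tailored precisely to this point, allowing one to cite the standard Vietoris--Begle style statement in Iversen that the paper references.
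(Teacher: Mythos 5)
The paper offers no proof of this proposition at all: it is quoted from Iversen (Theorem IV.1.6) and used as a black box, so your argument has to stand entirely on its own. It does not, because of your second step. You pass to a cofinal system of \emph{contractible} trivialising neighbourhoods of $y$, but the proposition places no hypothesis whatsoever on the base $Y$: the point $y$ need not possess a single contractible neighbourhood, let alone a neighbourhood basis of them (consider a base that is totally disconnected, or otherwise not locally contractible). The system you restrict the colimit to may therefore be empty, and the computation collapses. Tellingly, if $Y$ \emph{were} locally contractible your argument would prove the statement for an arbitrary fibre $F$ with no compactness assumption at all, since for contractible $U$ the projection $U\times F\to F$ is a homotopy equivalence and homotopy invariance of sheaf cohomology with constant coefficients holds for arbitrary spaces. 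That the statement nevertheless insists on $F$ having the homotopy type of a compact Hausdorff space shows the hypothesis is doing a different job from the one you assign to it in your closing paragraph.

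The real role of that hypothesis is in the colimit computation itself. After reducing, by cofinality of trivialising neighbourhoods and by a fibrewise application of homotopy invariance to replace $F$ by a compact Hausdorff model, one must show that the colimit of $H^n(U\times F,\k)$ over open neighbourhoods $U$ of $y$ maps isomorphically to $H^n(\{y\}\times F,\k)$ for $F$ compact Hausdorff. Here one argues that, by the tube lemma, the open sets of the form $U\times F$ are cofinal among \emph{all} open neighbourhoods of the compact set $\{y\}\times F$ inside the trivialised piece, and that by the continuity (tautness) property of sheaf cohomology along compact subspaces the colimit over all such neighbourhoods computes $H^n(\{y\}\times F,\k)$. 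This argument requires nothing of $Y$. A minor further point: your step 3 is only correct if you fix one trivialisation over some $U_0$ and use its restrictions to smaller $U$; independently chosen trivialisations differ by a transition homeomorphism of $F$, not by the identity. That is easily repaired, but the reliance on contractible neighbourhoods is a genuine gap.
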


From this proposition, we get the following version of the Vietoris-Begle mapping theorem.

\begin{lemma}\label{lemma:coh:3}
Let $f:X\to Y$ be a fibre bundle whose fibre $F$ is connected and homotopic to a compact Hausdoff space.
Suppose that there exists an integer $N$ or $N=\infty$ such that $H^n(F,\k)=0$
for any $0<n<N$. Then the canonical map
\begin{equation}\label{eq:coh:11}
H^n(Y,\k)\to H^n(X,\k)
\end{equation}
is an isomorphism for any $n<N$.
\end{lemma}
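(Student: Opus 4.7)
The plan is to argue via the Leray spectral sequence of the map $f$ in sheaf cohomology, using Proposition~\ref{proposition:coh:1} to compute the higher direct images.

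First I would identify the sheaves $R^qf_*\csh{\k}{X}$ on $Y$. By Proposition~\ref{proposition:coh:1}, the stalk of $R^qf_*\csh{\k}{X}$ at $y\in Y$ is $H^q(f^{-1}(y),\k)=H^q(F,\k)$. Since $F$ is connected, the stalks of $R^0f_*\csh{\k}{X}=f_*\csh{\k}{X}$ are all $\k$; combined with the fact that $f$ is a fibre bundle (so in particular locally trivial, which makes $f_*\csh{\k}{X}$ locally constant and gives the natural adjunction unit $\csh{\k}{Y}\to f_*\csh{\k}{X}$ an isomorphism on stalks), we conclude $R^0f_*\csh{\k}{X}\cong\csh{\k}{Y}$. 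For $0<q<N$, the stalks vanish by hypothesis, and since a sheaf with trivial stalks is the zero sheaf, $R^qf_*\csh{\k}{X}=0$ in that range.

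Next I would plug this into the Leray spectral sequence
\[
E_2^{p,q}=H^p(Y,R^qf_*\csh{\k}{X})\Rightarrow H^{p+q}(X,\k).
\]
The vanishing of $R^qf_*\csh{\k}{X}$ for $0<q<N$ kills the entire strip $0<q<N$ on the $E_2$ page. Therefore, for any $n<N$, the only column that can contribute to $H^n(X,\k)$ is $q=0$, so the edge homomorphism
\[
H^n(Y,\k)=H^n(Y,R^0f_*\csh{\k}{X})=E_2^{n,0}\to H^n(X,\k)
\]
is an isomorphism (no differentials in or out of $E_r^{n,0}$ for $r\ge 2$ can land in or leave the killed strip, and in the range $n<N$ no nontrivial terms above row $0$ survive to contribute to $H^n(X,\k)$). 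Finally one checks this edge map coincides with the canonical pullback $f^*\colon H^n(Y,\k)\to H^n(X,\k)$, which is the map in~(\ref{eq:coh:11}).

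The step that requires the most care is the identification $R^0f_*\csh{\k}{X}\cong\csh{\k}{Y}$: one must know not only that the stalks are $\k$ but that the resulting locally constant sheaf is in fact constant. This is automatic because the unit map $\csh{\k}{Y}\to f_*\csh{\k}{X}$ is defined globally and, by Proposition~\ref{proposition:coh:1} and connectedness of $F$, is a stalkwise isomorphism. Everything else is a routine unpacking of the first-quadrant spectral sequence (to which this paper implicitly appeals throughout).
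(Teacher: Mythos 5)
Your proof is correct, but it takes a genuinely different route from the paper's. Both arguments begin identically: you each use Proposition~\ref{proposition:coh:1} together with connectedness of the fibre to show that the adjunction unit $\csh{\k}{Y}\to f_*\csh{\k}{X}$ is a stalkwise, hence global, isomorphism, and that the stalks of $R^qf_*\csh{\k}{X}$ vanish for $0<q<N$. From there the paper follows Iversen's proof of the classical Vietoris--Begle theorem rather than the Leray spectral sequence: it takes an injective resolution $\csh{\k}{X}\to I^\bullet$, observes that $0\to f_*\csh{\k}{X}\to f_*I^0\to\cdots\to f_*I^N$ is exact, completes this to an injective resolution of $\csh{\k}{Y}$ with $J^n=f_*I^n$ for $n\le N$, and then identifies the canonical map~(\ref{eq:coh:11}) by comparing the two resolutions directly via the counit and the zigzag identity. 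Your argument replaces this resolution bookkeeping with the degeneration of the first-quadrant Leray spectral sequence $E_2^{p,q}=H^p(Y,R^qf_*\csh{\k}{X})\Rightarrow H^{p+q}(X,\k)$ in total degrees $n<N$, which is sound: for $p+q=n$ the terms with $q>0$ vanish outright, and the differentials into $E_r^{n,0}$ originate either in the killed strip $0<q<N$ or at negative $p$, so $H^n(Y,\k)=E_2^{n,0}=E_\infty^{n,0}=H^n(X,\k)$. Neither approach is more general --- both rest on exactly the same two inputs --- but yours is shorter and consistent with the paper's declared use of first-quadrant spectral sequences elsewhere, whereas the paper's version has the advantage that the canonical map is exhibited explicitly at the level of resolutions, so its identification with the comparison map of~\cite[II.5]{Iversen} is built in rather than deferred to the standard (and in your write-up only asserted) fact that the edge homomorphism of the Leray spectral sequence coincides with the pullback $f^*$.
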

\begin{proof} We generally follow the lines of the proof~\cite[Theorem III.6.4]{Iversen}.
First we note that the morphism $a:\csh{\k}{Y}\to f_*\csh{\k}{X}$
given by the adjunction unit is an isomorphism.
Indeed, for any open $V\subset Y$ and $s\in\csh{\k}{Y}(V)$, the map $a(V)(s)$ is the
composition $f^{-1}(V)\stackrel f\to V\stackrel s\to \k$. If we restrict $a$ to a point $y\in Y$
and compose it with~(\ref{eq:coh:10}) for $n=0$:
$$
\k\to(f_*\csh{\k}{X})_y=(R^0f_*\csh{\k}{X})_y\stackrel\sim\to H^0(f^{-1}(y),\k),
$$
then we get the map that takes $\gamma\in \k$ to the function on $f^{-1}(y)$ taking constantly the value $\gamma$.
As $f^{-1}(y)$ is connected, this map is an isomorphism. Hence the restriction of $a$ to $y$
is also an isomorphism.

Now let $\csh {\k}{X}\to I^\bullet$ be an injective resolution. Applying $f_*$, we get the sequence
$$
0\to f_*\csh {\k}{X}\to f_*I^0\to f_*I^1\to\cdots\to f_*I^N,
$$
which is exact by Proposition~\ref{proposition:coh:1} (see the proof of~\cite[Theorem III.6.4]{Iversen}).
This sequence can be completed to an injective resolution $f_*\csh {\k}{X}\to J^\bullet$,
where $J^n=f_*I^n$ for $n\le N$. Composing with $a$, we also get an injective resolution $\csh{\k}{Y}\to J^\bullet$.

Now we want to describe~(\ref{eq:coh:11}) applying the definition from~\cite[II.5]{Iversen}. The comparison theorem
for injective resolutions, yields (dashed arrows) a commutative diagram with exact rows
$$
\begin{tikzcd}[column sep=small]
0\arrow{r}&f^*\csh{\k}{Y}\arrow[equal]{d}\arrow{r}&f^*f_*I^0\arrow{r}\arrow{d}&f^*f_*I^1\arrow{r}\arrow{d}&\cdots\arrow{r}&f^*f_*I^N\arrow{r}\arrow{d}&f^*J^{N+1}\arrow{r}\arrow[dashed]{d}&f^*J^{N+2}\arrow{r}\arrow[dashed]{d}&\cdots\\
0\arrow{r}&\csh{\k}{X}\arrow{r}&I^0\arrow{r}&I^1\arrow{r}&\cdots\arrow{r}&I^N\arrow{r}&I^{N+1}\arrow{r}&I^{N+2}\arrow{r}&\cdots
\end{tikzcd}
$$
where the solid (not dashed) vertical arrows come from the counit of the adjunction.
The zigzag identity shows that the map
$$
\Gamma(Y,J^n)\to\Gamma(X,f^*J^n)\to\Gamma(X,I^n)=\Gamma(Y,f_*I^n)
$$
is the identity map for $n\le N$. Hence~(\ref{eq:coh:11}) is an isomorphism for $n<N$.
\end{proof}

\subsection{Approximation}
The following result is well known.
\begin{proposition}\label{proposition:3}
\begin{enumerate}\itemsep=8pt
\item\label{proposition:3:p:1} $E^N$ is simply-connected if $N>\r$.
\item\label{proposition:3:p:2} $H^n(E^N,\k)=0$ for $0<n<2(N-\r)+1$.
\item\label{proposition:3:p:3} $H^n(E^N,\k)$ is free for any $n$.
\end{enumerate}
\end{proposition}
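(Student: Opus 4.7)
The plan is to induct on $\r$ (and $N$), treating $E^N$ as the Stiefel manifold $V_\r(\C^N)$ of orthonormal $\r$-frames in $\C^N$ and allowing both parameters to vary. The basic tool is the fibre bundle
$$V_{\r-1}(\C^{N-1})\hookrightarrow V_\r(\C^N)\xrightarrow{\pi}S^{2N-1},$$
where $\pi$ sends the frame (the rows of $A$) to its first vector, so that the fibre over $v\in S^{2N-1}$ is the space of orthonormal $(\r-1)$-frames in the Hermitian complement $v^\perp\cong\C^{N-1}$. The base case $\r=1$ is immediate: $V_1(\C^N)=S^{2N-1}$ is simply connected for $N\ge 2$, has $H^n=0$ for $0<n<2N-1=2(N-\r)+1$, and free integral cohomology.

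To make the induction close, I would strengthen the hypothesis to the classical assertion that $H^\bullet(V_{\r'}(\C^{N'}),\Z)$ is the exterior $\Z$-algebra $\Lambda_\Z(\alpha_{2(N'-\r')+1},\alpha_{2(N'-\r')+3},\ldots,\alpha_{2N'-1})$. This immediately gives statements (2) and (3) over $\Z$, and the same statements for an arbitrary commutative ring $\k$ follow from the universal coefficient theorem because the integral cohomology is free of finite type. Property~(1) for $V_\r(\C^N)$ then drops out of the long exact homotopy sequence of the fibration: the fibre is simply connected by induction and $\pi_1(S^{2N-1})=0$ for $N\ge 2$.

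The cohomological half proceeds via the Serre spectral sequence
$$E_2^{p,q}=H^p(S^{2N-1},\Z)\otimes H^q(V_{\r-1}(\C^{N-1}),\Z)\Longrightarrow H^{p+q}(V_\r(\C^N),\Z),$$
with trivial local system thanks to the simple-connectedness of the base. The $E_2$-page is concentrated in the two columns $p=0$ and $p=2N-1$, so only one differential can be nonzero: the transgression $d_{2N-1}\colon E_{2N-1}^{0,q}\to E_{2N-1}^{2N-1,q-2N+2}$. By the inductive hypothesis every algebra generator of $H^\bullet(V_{\r-1}(\C^{N-1}),\Z)$ has degree $2k-1\le 2N-3$, so its transgression lands in fibre degree $2k-2N+1\le-1$, which forces it to vanish. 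Since $d_{2N-1}$ is a derivation, it vanishes on all products too; the spectral sequence collapses, and
$$H^\bullet(V_\r(\C^N),\Z)\cong\Lambda_\Z\bigl(\alpha_{2(N-\r)+1},\alpha_{2(N-\r)+3},\ldots,\alpha_{2N-1}\bigr),$$
closing the induction.

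The main obstacle is the vanishing of $d_{2N-1}$. A purely numerical induction (just vanishing range and freeness) does not seem strong enough; what makes the argument work is carrying the full exterior-algebra structure through the induction, since this constrains the degrees of the fibre generators and forces the transgression target into negative fibre degree, after which the Leibniz rule handles the rest.
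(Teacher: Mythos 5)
The paper does not actually prove Proposition~\ref{proposition:3}: it is introduced with the words ``the following result is well known'' and no argument or reference is supplied, so there is no internal proof to compare against. Your argument is the standard textbook proof and is correct: the bundle $V_{\r-1}(\C^{N-1})\to V_\r(\C^N)\to S^{2N-1}$, the base case $V_1(\C^N)=S^{2N-1}$, simple connectivity from the homotopy exact sequence, and the Serre spectral sequence concentrated in the two columns $p=0$ and $p=2N-1$, where the only possible differential is the transgression $d_{2N-1}$, killed on algebra generators for degree reasons and hence everywhere by the Leibniz rule. Your diagnosis that one must carry the multiplicative (exterior-algebra) structure through the induction is right \emph{for this choice of fibration}; strictly speaking all the induction needs is that $H^\bullet(V_{\r-1}(\C^{N-1}),\Z)$ is generated as a ring in degrees $\le 2N-3$, which is slightly weaker than the full exterior-algebra presentation but comes from the same $E_\infty$-page bookkeeping. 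One small remark: a purely ``numerical'' induction (vanishing range plus freeness, no ring structure) does in fact close if one instead uses the other standard fibration $S^{2(N-\r)+1}\to V_\r(\C^N)\to V_{\r-1}(\C^N)$ obtained by forgetting one vector of the frame: inducting on $\r$ with $N$ fixed, the Euler class lies in $H^{2(N-\r)+2}(V_{\r-1}(\C^N),\Z)$, which vanishes by the inductive vanishing range $0<n<2(N-\r)+3$, so the Gysin sequence splits into short exact sequences of free modules and gives (2) and (3) directly. Either route is fine; yours additionally yields the full ring $\Lambda_\Z(\alpha_{2(N-\r)+1},\ldots,\alpha_{2N-1})$, which is more than the proposition asks for but costs nothing.
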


To compute the modules $H^n_K(X,\k)$, one can approximate $E^\infty$ by $E^N$ with $N$ big enough
as follows.

\begin{lemma}\label{lemma:15}
Let $X$ be a $K$-space. Then $H^n_K(X,\k)\cong H^n(X\times_KE^N,\k)$ for $n<2(N-\r)+1$.
These isomorphisms are natural along
continuous $K$-equivariant maps
and with respect to with the cup product.
\end{lemma}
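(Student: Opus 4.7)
The plan is to imitate the double Borel construction comparison already used in equations~(\ref{eq:bt6:2.5})--(\ref{eq:bt6:2}) of Section~\ref{Def_equiv}. Introduce the auxiliary space $Y=X\times_K(E^N\times E^\infty)$, where $K$ acts diagonally. The two natural projections yield the commutative diagram
$$
\begin{tikzcd}
{}&Y\arrow{dl}[swap]{q}\arrow{dr}{r}&\\
X\times_K E^N&&X\times_K E^\infty
\end{tikzcd}
$$
in which $q$ (forgetting the $E^\infty$-coordinate) is a fibre bundle with fibre $E^\infty$, and $r$ (forgetting the $E^N$-coordinate) is a fibre bundle with fibre $E^N$.

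First, by Proposition~\ref{proposition:3}, the fibre $E^N$ is compact Hausdorff, connected for $N\ge\r$, simply connected for $N>\r$, and satisfies $H^n(E^N,\k)=0$ for $0<n<2(N-\r)+1$. Applying Lemma~\ref{lemma:coh:3} to $r$ yields that
$$
r^*:H^n(X\times_K E^\infty,\k)\to H^n(Y,\k)
$$
is an isomorphism for $n<2(N-\r)+1$. Second, the fibre $E^\infty$ of $q$ is weakly contractible (being a universal $U(\r)$-bundle space), and the same style of argument that justifies diagram~(\ref{eq:bt6:2}) in Section~\ref{Def_equiv} shows that $q^*$ is an isomorphism in all degrees. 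Composing, we obtain the natural isomorphism $H^n_K(X,\k)=H^n(X\times_K E^\infty,\k)\cong H^n(X\times_K E^N,\k)$ in the required range.

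For naturality, given a continuous $K$-equivariant map $f:X\to X'$, the induced maps on Borel constructions and on the auxiliary spaces $Y,Y'$ intertwine with $q$ and $r$ on both sides, so the resulting comparison isomorphism is natural in $X$. Compatibility with the cup product is immediate because every arrow used is the pullback along a genuine continuous map, and the cup product is natural under such maps.

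The main obstacle is step two: the fibre $E^\infty$ of $q$ is not compact, so Lemma~\ref{lemma:coh:3} does not literally apply. This is exactly the same difficulty implicitly resolved in Section~\ref{Def_equiv} for the comparison of two universal principal bundles; one clean route is to write $E^\infty=\dlim_M E^M$, so that $Y=\dlim_M (X\times_K(E^N\times E^M))$ and $q$ is a colimit of bundles $q_M$ with compact fibre $E^M$. Applying step one with $E^N$ replaced by $E^M$ shows that $q_M^*$ is an isomorphism in degrees $n<2(M-\r)+1$, and passing to the limit (using that sheaf cohomology with constant coefficients on these paracompact spaces is compatible with this type of direct limit) yields that $q^*$ is an isomorphism in all degrees.
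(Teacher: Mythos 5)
Your argument is essentially the paper's own proof: the paper forms the same space $(X\times E^\infty\times E^N)/K$ with its two projections and deduces both pullbacks are isomorphisms from Proposition~\ref{proposition:3}\ref{proposition:3:p:2} and Lemma~\ref{lemma:coh:3}, leaving naturality and cup-product compatibility as routine checks. The only divergence is your direct-limit patch for the projection with fibre $E^\infty$, which is unnecessary: Lemma~\ref{lemma:coh:3} requires the fibre only to be \emph{homotopy equivalent} to a compact Hausdorff space, and the contractible space $E^\infty$ is homotopy equivalent to a point, which is exactly how the paper already uses this lemma for the non-compact fibres in diagram~(\ref{eq:bt6:2}).
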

\begin{proof} For $n<2(N-\r)+1$, have the diagram similar to~(\ref{eq:bt6:2})
$$
\begin{tikzcd}
{}&H^n((X\times E^\infty\times E^N)/K,\k)&\\
\arrow{ru}{(p_{12}/K)^*}[swap]{\sim}H^n_K(X,\k)&&H^n(X\times_K E^N)\arrow{lu}[swap]{(p_{13}/K)^*}{\sim}
\end{tikzcd}
$$
Both arrows are isomorphisms by Proposition~\ref{proposition:3}\ref{proposition:3:p:2} and
Lemma~\ref{lemma:coh:3}. The two remaining assertions can be checked routinely.
\end{proof}

\begin{lemma}\label{lemma:bt6:4}
Let $X_1,\ldots,X_m$ be $K$-spaces. Then
$$
H^n\(\prod_{i=1}^mX_i\times_KE^\infty,\k\)\cong H^n\(\prod_{i=1}^mX_i\times_KE^N,\k\)
$$
for $n<2(N-\r)+1$. These isomorphisms are natural with respect to the cup product and the projections to factors.
In particular,
we get the following commutative diagram:
$$
\begin{tikzcd}
\displaystyle
\mathop{{\bigotimes{}_\k}}\limits_{n_1+\cdots+n_m=n} H^{n_i}\(X_i\times_KE^\infty,\k\)\arrow{r}\arrow[equal]{d}[swap]{\wr}&\displaystyle H^n\(\prod_{i=1}^mX_i\times_KE^\infty,\k\)\arrow[equal]{d}{\wr}\\
\displaystyle
\mathop{{\bigotimes{}_\k}}\limits_{n_1+\cdots+n_m=n} H^{n_i}\(X_i\times_KE^N,\k\)\arrow{r}&\displaystyle H^n\(\prod_{i=1}^mX_i\times_KE^N,\k\)
\end{tikzcd}
$$
\end{lemma}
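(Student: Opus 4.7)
The plan is to reduce everything to Lemma~\ref{lemma:15} applied to the single $K$-space $X = \prod_{i=1}^m X_i$ with the diagonal $K$-action, and then extract the diagram from naturality properties already recorded in that lemma.

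\textbf{Step 1: the main isomorphism.} First I would note that $\prod_{i=1}^m X_i$ is a $K$-space under the diagonal action $k\cdot(x_1,\ldots,x_m) = (kx_1,\ldots,kx_m)$, so Lemma~\ref{lemma:15} applies verbatim with $X$ replaced by this product. This directly produces the isomorphism
\[
H^n\!\(\prod_{i=1}^mX_i\times_KE^\infty,\k\)\cong H^n\!\(\prod_{i=1}^mX_i\times_KE^N,\k\)
\]
for $n<2(N-\r)+1$, and this isomorphism is induced by the maps $(p_{12}/K)^*$ and $(p_{13}/K)^*$ in the auxiliary diagram from the proof of Lemma~\ref{lemma:15}.

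\textbf{Step 2: naturality with projections and cup product.} Each projection $p_j\colon\prod_{i=1}^mX_i\to X_j$ is $K$-equivariant (diagonal on the source, given action on the target), so Lemma~\ref{lemma:15} supplies a commutative square expressing that the isomorphism of Step 1 intertwines the pullbacks $p_j^*$ on the $E^\infty$- and $E^N$-sides. Similarly, the cup product statement in Lemma~\ref{lemma:15} ensures that the isomorphism of Step~1 is a ring homomorphism.

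\textbf{Step 3: the diagram.} The horizontal arrows in the displayed diagram are nothing other than the Künneth-type maps
\[
a_1\otimes\cdots\otimes a_m\;\longmapsto\; p_1^*(a_1)\smile\cdots\smile p_m^*(a_m),
\]
one version using $E^\infty$ and the other using $E^N$. The right vertical arrow is the isomorphism of Step~1; the left vertical arrow is the tensor product of the isomorphisms obtained from Lemma~\ref{lemma:15} applied to each individual $X_i$. Commutativity of the square then follows by combining Step~2 (each $p_j^*$ commutes with the vertical isomorphisms) with the cup-product compatibility (each $\smile$ commutes with them), applied factor by factor.

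\textbf{Main obstacle.} The only thing that really needs care is the bookkeeping in Step~2: one has to verify that the isomorphisms $(p_{12}/K)^*$ and $(p_{13}/K)^*$ defining the identification in Lemma~\ref{lemma:15} are genuinely natural along $K$-equivariant maps and compatible with cup products. Both facts follow from functoriality of pushforward along the bundle projections $p_{12}/K$ and $p_{13}/K$ in diagram~(\ref{eq:bt6:2.5}) (replacing $E_T,E'_T$ by $E^N,E^\infty$) together with the multiplicativity of the Vietoris--Begle isomorphism supplied by Lemma~\ref{lemma:coh:3}. Once this is in place, no further computation is required, and the diagram commutes for purely formal reasons.
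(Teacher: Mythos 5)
Your Step 1 rests on a misreading of the statement. In this lemma the expression $\prod_{i=1}^m X_i\times_K E^\infty$ denotes the product $\prod_{i=1}^m\bigl(X_i\times_K E^\infty\bigr)$ of $m$ \emph{separate} Borel constructions, not the Borel construction $\bigl(\prod_{i=1}^m X_i\bigr)\times_K E^\infty$ of the product with the diagonal $K$-action. This is forced by how the lemma is used and proved: the paper's proof takes the \emph{direct product} of the $m$ comparison spans $(X_i\times E^\infty\times E^N)/K$, so that the two projections are fibre bundles with fibres $(E^N)^m$ and $(E^\infty)^m$ (a single diagonal quotient would give fibres $E^N$ and $E^\infty$); the horizontal arrows of the displayed diagram are the cross product into the cohomology of a product of spaces; and the lemma is applied in Lemmas~\ref{lemma:bt6:3} and~\ref{lemma:bt6:5} to $L^N=(\BS_c(s')\times_K E^N)\times(E^N/K)$, visibly a product of two Borel constructions. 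The two readings give genuinely different statements: already for $m=2$ and $X_1=X_2=\pt$ one compares $E^\infty/K$ with $(E^\infty/K)\times(E^\infty/K)$, whose cohomologies differ. So applying Lemma~\ref{lemma:15} to $X=\prod_i X_i$ with the diagonal action proves a true but different assertion, and Steps 2--3 inherit the mismatch: your ``K\"unneth-type map'' $a_1\otimes\cdots\otimes a_m\mapsto p_1^*(a_1)\smile\cdots\smile p_m^*(a_m)$ lands in $H^n\bigl((\prod_i X_i)\times_K E\bigr)$ and is not the cross product appearing in the lemma.

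The repair is essentially the paper's argument, and is close in spirit to what you sketch in your final paragraph: form the product over $i$ of the individual spans, obtaining a diagram $\prod_{i=1}^m\bigl(X_i\times_K E^\infty\bigr)\leftarrow\prod_{i=1}^m\bigl((X_i\times E^\infty\times E^N)/K\bigr)\rightarrow\prod_{i=1}^m\bigl(X_i\times_K E^N\bigr)$ whose arrows are fibre bundles with fibres $(E^N)^m$ and $(E^\infty)^m$; the latter is contractible, and $H^n\bigl((E^N)^m,\k\bigr)=0$ for $0<n<2(N-\r)+1$ by the K\"unneth formula together with the freeness in Proposition~\ref{proposition:3}; then Lemma~\ref{lemma:coh:3} gives the isomorphism in the stated range. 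Naturality with respect to the projections to factors and the cross/cup products is then automatic, because the comparison span for the product is by construction the product of the comparison spans for the factors.
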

\noindent
where the horizontal arrow are given by the cross product.
\begin{proof}
The result follows if we consider the following direct products of fibre bundles:
$$
\begin{tikzcd}
{}&\displaystyle\prod_{i=1}^m(X_i\times E^\infty\times E^N)/K\arrow{dr}\arrow{dl}&\\
\displaystyle\prod_{i=1}^mX_i\times_KE^\infty&&\displaystyle\prod_{i=1}^mX_i\times_KE^N
\end{tikzcd}
$$
The left bundle has fibre $(E^N)^m$ and the right one fibre $(E^\infty)^m$. The last space
is contractible and $H^n((E^N)^m,\k)=0$ for $0<n<2(N-\r)+1$ by the K\"unneth formula.
It remains to take cohomologies and apply Lemma~\ref{lemma:coh:3}.
\end{proof}

We want to prove the results similar to Proposition~\ref{proposition:3} for the quotient $E^N/K$.
\begin{lemma}\label{lemma:12}
\begin{enumerate}
\item\label{lemma:12:p:1} $E^N/K$ is simply-connected if $N>\r$.\\[-6pt]
\item\label{lemma:12:p:2} Suppose that $N>\r$. Then for $n<2(N-\r)+1$, the $\k$-module $H^n(E^N/K,\k)$ is free of finite rank
                          and equals zero if $n$ is odd.
\end{enumerate}
\end{lemma}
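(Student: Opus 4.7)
\textbf{Proof plan for Lemma~\ref{lemma:12}.}

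For part~\ref{lemma:12:p:1}, my plan is to exploit the fact that $E^N\to E^N/K$ is a principal $K$-bundle, hence a fibre bundle with fibre $K$. The associated long exact sequence of homotopy groups reads
$$
\pi_1(E^N)\to\pi_1(E^N/K)\to\pi_0(K)\to\pi_0(E^N).
$$
By Proposition~\ref{proposition:3}\ref{proposition:3:p:1} we have $\pi_1(E^N)=0$ whenever $N>\r$, and since $K\cong(S^1)^d$ is connected we have $\pi_0(K)=0$. Exactness immediately forces $\pi_1(E^N/K)=0$.

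For part~\ref{lemma:12:p:2}, the idea is to identify $E^N/K$ with an approximation of $B_K$ and transport the known structure of $H^\bullet(B_K,\k)$. Specifically, I would apply Lemma~\ref{lemma:15} to the one-point space $X=\{\pt\}$ with its trivial $K$-action: since $\{\pt\}\times_KE^N$ is canonically homeomorphic to $E^N/K$, that lemma yields an isomorphism
$$
H^n(E^N/K,\k)\cong H^n_K(\{\pt\},\k)
$$
for every $n<2(N-\r)+1$. The right-hand side is the ordinary cohomology of $B_K$, which we may compute using \emph{any} convenient classifying space.

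Taking the model $B_K=(\C P^\infty)^d$ from Section~\ref{Def_equiv} together with the elementary computation $H^\bullet(\C P^\infty,\k)=\k[x]$ with $\deg x=2$, and then applying the K\"unneth formula (legitimate because each factor has free cohomology concentrated in even degrees, so all Tor terms vanish), we obtain
$$
H^\bullet(B_K,\k)\cong\k[x_1,\ldots,x_d],\qquad\deg x_i=2.
$$
Every graded piece of this polynomial ring is a free $\k$-module of finite rank, and the odd graded pieces vanish. Combined with the isomorphism above, this gives the two assertions of part~\ref{lemma:12:p:2} in the indicated range of degrees.

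I do not anticipate a genuine obstacle here: the only subtle point is ensuring the degree range in Lemma~\ref{lemma:15} is the same as the one claimed in the statement, which it is (both are $n<2(N-\r)+1$), and checking that the hypothesis $N>\r$ guarantees the Vietoris-Begle condition used in Lemma~\ref{lemma:15}, which follows from Proposition~\ref{proposition:3}\ref{proposition:3:p:2}.
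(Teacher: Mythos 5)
Your proposal is correct and follows essentially the same route as the paper: part (1) via the homotopy long exact sequence of the principal $K$-bundle $E^N\to E^N/K$ using Proposition~\ref{proposition:3}\ref{proposition:3:p:1} and connectedness of $K$, and part (2) via Lemma~\ref{lemma:15} applied to a point, identifying $H^n(E^N/K,\k)$ with $H^n_K(\pt,\k)=S$ in the stated range. The only difference is that you spell out the K\"unneth computation of $H^\bullet(B_K,\k)\cong\k[x_1,\ldots,x_d]$, whereas the paper simply invokes the known structure of $S$ from Section~\ref{Equivariant_cohomology_of_a_point}.
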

\begin{proof}\ref{lemma:12:p:1} As $K$ is connected, Part~\ref{lemma:12:p:1} follows from Proposition~\ref{proposition:3}\ref{proposition:3:p:1},
and the long exact sequence of homotopy groups:
$$
\{1\}=\pi_1(E^N)\to \pi_1(E^N/K)\to \pi_0(K)=\{1\}.
$$

\ref{lemma:12:p:2} By Lemma~\ref{lemma:15}, we get
$
H^n(E^N/K,\k)\cong H_K^n(\pt,\k)=S.
$
As the the latter module vanishes in odd degrees, the result follows.
\end{proof}

\begin{lemma}\label{lemma:14}
Let $X$ be a $K$-space having an affine paving. Then for $N<\infty$ and any $n\le2(N-\r)-1$, the $\k$-module $H_c^n(X\times_K E^N,\k)$
is free of finite rank and is zero if $n$ is odd.
\end{lemma}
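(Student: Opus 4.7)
The plan is to use the fibre bundle $\pi:X\times_KE^N\to E^N/K$ coming from the second projection (a locally trivial fibration with fibre $X$, because $K$ acts freely on $E^N$) together with the Leray spectral sequence for compactly supported cohomology
\[
E_2^{p,q}=H^p(E^N/K,R^q\pi_!\underline{\k})\Rightarrow H^{p+q}_c(X\times_KE^N,\k).
\]
To identify $E_2$, I first compute $H^\bullet_c(X,\k)$ from the affine paving $\emptyset=X_0\subset\cdots\subset X_m=X$: each open stratum $X_i\setminus X_{i-1}\cong\C^{m_i}$ contributes $H^n_c(\C^{m_i},\k)=\k$ in degree $2m_i$ only, so the long exact sequence of compactly supported cohomology associated with the open--closed decomposition $X_i=(X_i\setminus X_{i-1})\sqcup X_{i-1}$, inducted on $i$, shows that $H^\bullet_c(X,\k)$ is free of finite rank and concentrated in even degrees. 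By Lemma~\ref{lemma:12}, for $N>\r$ the base $E^N/K$ is simply connected and $H^p(E^N/K,\k)$ is free of finite rank and zero for $p$ odd in the range $p\le 2(N-\r)$. Simple-connectivity makes $R^q\pi_!\underline{\k}$ the constant sheaf with stalks $H^q_c(X,\k)$, and freeness then yields $E_2^{p,q}=H^p(E^N/K,\k)\otimes_\k H^q_c(X,\k)$.

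Next I would carry out the parity bookkeeping on the relevant diagonal. For $n\le 2(N-\r)-1$ and $(p,q)$ with $p+q=n$ one has $p\le n<2(N-\r)$, so $E_2^{p,q}=0$ unless both $p$ and $q$ are even. The differentials $d_r:E_r^{p,q}\to E_r^{p+r,q-r+1}$ raise total degree by $1$, and since $p+r\le(p+r)+(q-r+1)=p+q+1\le 2(N-\r)$ the target also lies in the good range where Step~2 controls the base. A direct parity check then shows the target has an odd coordinate for every $r\ge 2$ and hence vanishes; the same applies to incoming differentials. Therefore $E_\infty^{p,q}=E_2^{p,q}$ throughout total degrees $\le 2(N-\r)-1$, and freeness of each summand splits all extension problems, giving
\[
H^n_c(X\times_KE^N,\k)\cong\bigoplus_{p+q=n}H^p(E^N/K,\k)\otimes_\k H^q_c(X,\k)
\]
for $n\le 2(N-\r)-1$, which is free of finite rank and zero in odd degrees.

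The main obstacle is justifying the Leray spectral sequence for compactly supported cohomology with a possibly non-compact fibre, and in particular identifying the stalks of $R^q\pi_!\underline{\k}$ with $H^q_c(X,\k)$ with trivial monodromy. The cleanest route is to invoke the sheaf-theoretic $\pi_!$ formalism from Iversen; a more hands-on alternative is to trivialize $\pi$ over a finite open cover of $E^N/K$ and patch by Mayer--Vietoris using the K\"unneth identity $H^\bullet_c(U\times X,\k)\cong H^\bullet_c(U,\k)\otimes_\k H^\bullet_c(X,\k)$ on product neighbourhoods, subordinated to a CW structure on $E^N/K$. Once this foundational step is in place, the rest of the argument is the purely combinatorial parity check sketched above.
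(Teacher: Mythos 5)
Your proposal is correct and follows essentially the same route as the paper: the Leray spectral sequence with compact supports for the bundle $X\times_KE^N\to E^N/K$, vanishing of $E_2^{p,q}$ in the range $p+q\le 2(N-\r)-1$ unless $p$ and $q$ are both even (using Lemma~\ref{lemma:12} for the base), degeneration by the parity of the differentials, and freeness because the filtration has free quotients. The only difference is that you spell out the standard induction over the affine paving showing $H_c^\bullet(X,\k)$ is free and even, which the paper leaves implicit.
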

\begin{proof} It suffices to consider only the case $n\ge0$. Then we have $N>\mathfrak r$.
Let us consider the Leray spectral sequence with compact support for the canonical projection $X\times_KE^N\to E^N/K$
As $E^N/K$ is simply connected, it has the following second page:
$$
E_2^{p,q}=H^p(E^N/K,H_c^q(X,\k)).
$$
By Lemma~\ref{lemma:12}\ref{lemma:12:p:2}, we get that $E_2^{p,q}=0$
except the following cases: $p\ge 2(N-\r)+1$; both $p$ and $q$ are even.
Moreover, $E_2^{p,q}$ is free of finite rank for $p<2(N-\r)+1$.

The differentials coming to and starting from $E_a^{p,q}$ are thus zero if $a\ge2$ and $p+q+1\le2(N-\r)$.
So we get $E_\infty^{p,q}=E_2^{p,q}$ for $p+q+1\le2(N-\r)$.
It follows from the spectral sequence that $H_c^n(X\times_KE^N,\k)=0$ for odd $n\le2(N-\r)-1$.
For even $n\le2(N-\r)-1$, the module $H_c^n(X\times_KE^N,\k)$ is filtered by the free of finite rank modules $E_2^{p,q}$
with even nonnegative $p$ and $q$ such that $p+q=n$. As $\mathop{\mathrm{Ext}}\nolimits^1(\k,\k)=0$,
the module $H_c^n(X\times_KE^N,\k)$ is also free of finite rank.
\end{proof}

\section{Cohomology of $\BS_c(s)$}\label{Cohomology of BS_c(s)} In this section, we are going to describe a
set of multiplicative generators of $H_K^\bullet(\BS_c(s),\k)$.
%
%
%

\subsection{Twisted actions of $S$} Suppose that $E_K\to B_K$ is a universal principal $K$-bundle
such that the $K$-action on $E_K$ can be extended to a continuous $C$-action.
An example of such a bundle is the quotient map $E^\infty\to E^\infty/K$ (see, Section~\ref{Stiefel manifolds}).
Let $s:I\to\mathcal T(W)$ be a sequence
and $i$ be an element of $I\cup\{-\infty\}$. We define the map $\Sigma(s,i,w):\BS_c(s)\times_KE_K\to E_K/K$ by
\begin{equation}\label{eq:bt9:2}
K([c],e)\mapsto K(c^i\dot w)^{-1}e.
\end{equation}
The reader can easily check that this map is well-defined and continuous. Taking cohomologies, we get the map
$$
\Sigma(s,i,w)^*:S\to H_K^\bullet(\BS_c(s),\k).
$$
This map induces the action of $S$ on $H^\bullet(\BS_c(s),\k)$ by
$u\cdot h=\Sigma(s,i,w)^*(u)\cup h$. For $w=1$ and $i=-\infty$, we get the canonical
action of $S$.
%
Note that these actions are independent of the choice of the universal principal $K$-bundle
and of the action of $C$.

We also consider the finite dimensional version of these maps. Let $\Sigma^N(s,i,w):\BS_c(s)\times_KE^N_K\to E^N_K/K$
be the map given by~(\ref{eq:bt9:2}). Here $N$ may be an integer greater than or equal to $\r$ or $\infty$.
Note that $\Sigma^\infty(s,i,w)$ is a representative of $\Sigma(s,i,w)$.

\subsection{Embeddings of Borel constructions}\label{Embeddings of Borel constructions} We are going to prove the following result,
which use for induction. Remember that we denote truncated sequences by the prime.

Let $\r\le N$, $s:I\to\mathcal T(W)$ be a nonempty sequence,
$w\in W$ and $\tr:\BS_c(s)\to\BS_c(s')$ be the truncation map $\tr([c])=[c']$.
We consider the map
$$
\chi_w^N=(\tr\times_K\id)\boxtimes\Sigma^N(s,\max I,w)
$$
from $\BS_c(s)\times_K E^N$ to $L^N=(\BS_c(s')\times_K E^N)\times(E^N/K)$.
We also abbreviate $L=L^\infty$ and $\chi_w=\chi_w^\infty$.

\begin{lemma}\label{lemma:bt6:1} For $N<\infty$,
the map $\chi_w^N$ is a topological embedding.
Its image consists of the pairs $(K([d],e),K\tilde e)$ such that
\begin{equation}\label{eq:bt6:13}
C_{w^{-1}s_{\max I}w}\tilde e=C_{w^{-1}s_{\max I}w}\Sigma^N(s',\max I',w)(K([d],e)).
\end{equation}
\end{lemma}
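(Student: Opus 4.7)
The strategy has three parts: verify the image is contained in the claimed set, produce preimages for every pair satisfying the condition, and use compactness to promote a continuous bijection onto the image into a topological embedding.

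First, I would check the easy inclusion. For any $K([c],e)\in\BS_c(s)\times_K E^N$, set $e'=(c'^{\max I'}\dot w)^{-1}e$; this represents $\Sigma^N(s',\max I',w)(K([c'],e))$ in $E^N/K$. Writing $c^{\max I}=c'^{\max I'}c_{\max I}$ and moving $\dot w$ inside gives
$$
(c^{\max I}\dot w)^{-1}e=\dot w^{-1}c_{\max I}^{-1}\dot w\cdot\dot w^{-1}(c'^{\max I'})^{-1}e=(\dot w^{-1}c_{\max I}\dot w)^{-1}\,e',
$$
and $\dot w^{-1}c_{\max I}\dot w\in C_{w^{-1}s_{\max I}w}$ by the conjugation formula $\dot wC_{s_\alpha}\dot w^{-1}=C_{w\alpha}$ from Section~2.1. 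Hence $K\tilde e$ and $Ke'$ lie in one $C_{w^{-1}s_{\max I}w}$-orbit in $E^N/K$, which is condition~(\ref{eq:bt6:13}).

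For the reverse direction, suppose $(K([d],e),K\tilde e)$ satisfies (\ref{eq:bt6:13}). By definition of the left $C_{w^{-1}s_{\max I}w}$-action on $E^N/K$ we obtain some $c_0\in C_{w^{-1}s_{\max I}w}$ with $K\tilde e=Kc_0(d^{\max I'}\dot w)^{-1}e$. I then \emph{define} $c\in C(s)$ by $c_i=d_i$ for $i\in I'$ and $c_{\max I}=\dot wc_0^{-1}\dot w^{-1}\in C_{s_{\max I}}$ (again by conjugation of the $C_\alpha$). A direct computation gives $c^{\max I}\dot w=d^{\max I'}\dot wc_0^{-1}$, so $(c^{\max I}\dot w)^{-1}e=c_0(d^{\max I'}\dot w)^{-1}e$ and therefore $\chi_w^N(K([c],e))=(K([d],e),K\tilde e)$.

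The injectivity step is the main obstacle and uses that the $U(\r)$-action on $E^N$ is free. Assume $\chi_w^N(K([c],e))=\chi_w^N(K([\tilde c],\tilde e))$. Absorbing the $K$-factor relating the first components, I may assume $\tilde e=e$ and that there exists $\tilde k\in K(I')$ with $\tilde c'=c'\tilde k$; a telescoping calculation then yields $\tilde c^{\max I'}=c^{\max I'}\tilde k_{\max I'}$. The second-component equality gives $k_1\in K$ with
$$
(\tilde c^{\max I}\dot w)^{-1}e=k_1(c^{\max I}\dot w)^{-1}e.
$$
Substituting the expression for $\tilde c^{\max I}$, cancelling $(c^{\max I'})^{-1}$ (using freeness of the $U(\r)$-action on $E^N$ to cancel the common vector $e_1=(c^{\max I'})^{-1}e$), and rearranging, one obtains
$$
\tilde c_{\max I}=\tilde k_{\max I'}^{-1}c_{\max I}\dot wk_1^{-1}\dot w^{-1}.
$$
Now extend $\tilde k$ to $\hat k\in K(I)$ by setting $\hat k_{\max I}=\dot wk_1^{-1}\dot w^{-1}$, which lies in $K$ because $\dot w$ normalises $K$ in $\mathcal N$. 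Direct verification shows $c\hat k=\tilde c$, whence $[c]=[\tilde c]$ and the two $K$-orbits coincide.

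Finally, for $N<\infty$ the Stiefel manifold $E^N$ is compact, so $\BS_c(s)\times_K E^N$ is compact; the target $L^N$ is Hausdorff as a product of quotients of Hausdorff spaces by continuous compact group actions. A continuous injection from a compact space to a Hausdorff space is a closed map, hence a homeomorphism onto its image, which completes the proof. The key technical point to watch is that the kernel extraction in the injectivity argument actually lands in $K$ rather than in the larger compact group $C_{s_{\max I}}$; this is exactly where both the freeness of the $U(\r)$-action and the normality $\dot wK\dot w^{-1}=K$ are essential.
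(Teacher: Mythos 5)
Your proposal is correct and follows essentially the same route as the paper: the forward and reverse inclusions for the image are the identical computations with $\dot w C_{s_\alpha}\dot w^{-1}=C_{w\alpha}$, injectivity is obtained by cancelling via the free action on the Stiefel manifold (the paper phrases this as freeness of the $C$-action and first normalizes to $c'=\tilde c'$, which shortens the bookkeeping you do with $\tilde k$ and $\hat k$), and the upgrade from continuous injection to embedding is the same compact-to-Hausdorff argument that the paper leaves implicit.
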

\begin{proof} We denote $i=\max I$ for brevity. To prove the claim about the topological embedding,
it suffices to prove that the above map is injective.
Suppose that two orbits $K([c],e)$ and $K([\tilde c],\tilde e)$ are mapped to the same pair.
As $K([c'],e)=K([\tilde c'],\tilde e)$, we can assume that  $[c']=[\tilde c']$ and $e=\tilde e$.
Therefore, without generality we can assume that $c'=\tilde c'$.

We get $\Sigma^N(s,i,w)(K([c],e))=\Sigma^N(s,i,w)(K([\tilde c],e))$. Let us write this equality as follows:
$$
\dot w^{-1}c_i^{-1}(c^{i-1})^{-1}e=k\dot w^{-1}\tilde c_i^{-1}(c^{i-1})^{-1}e
$$
for some $k\in K$. As $C$ acts freely on $E^N$, we get $\dot w^{-1}c_i^{-1}=k\dot w^{-1}\tilde c_i^{-1}$,
whence $\tilde c_i=c_i\dot wk\dot w^{-1}$. As $\dot wk\dot w^{-1}\in K$, we get $[\tilde c]=[c]$.

Let us check that any element of the image of
$\chi_w^N$
satisfies~(\ref{eq:bt6:13}).
Let $K([c],e)$ be an element of $\BS_c(s)\times_K E^N$. It is mapped to $(K([c'],e),K(c^i\dot w)^{-1}e)$. We get
$$
(c^i\dot w)^{-1}e=\dot w^{-1}c_i^{-1}(c^{i-1})^{-1}e=(\dot w^{-1}c_i^{-1}\dot w)(c^{i-1}\dot w)^{-1}e
=\dot w^{-1}c_i^{-1}\dot w\,\Sigma^N(s',i-1,w)(K([c'],e)).
$$
It remains to note that $\dot w^{-1}c_i^{-1}\dot w\in\dot w^{-1}C_{s_i}\dot w=C_{w^{-1}s_iw}$.

Conversely, suppose that a pair $(K([d],e),K\tilde e)$ of $L^N$ satisfies~(\ref{eq:bt6:13}).
Then there exists an element $c\in\dot w^{-1}C_{s_i}\dot w$ such that
$c\tilde e=(d^{i-1}\dot w)^{-1}e$. We define the sequence $c:I\to\mathcal T(W)$ by
$$
c_j=
\left\{
\begin{array}{ll}
d_j&\text{ if }j<i;\\[6pt]
\dot wc\dot w^{-1}&\text{ otherwise}.
\end{array}
\right.
$$
We get
$$
(\tr\times_K\id)(K([c],e))=K([c'],e)=K([d],e).
$$
On the other hand
$$
\Sigma^N(s,i,w)(K([c],e))=K(c^i\dot w)^{-1}e=K\dot w^{-1}c_i^{-1}(c^{i-1})^{-1}e
=Kc^{-1}(d^{i-1}\dot w)^{-1}e=K\tilde e.
$$
\end{proof}

\subsection{The difference $L^N\setminus\im\chi_w^N$}
We study the cohomology of this difference by considering it as the total space
of a fibre bundle. Here and in what follows $\chi_w^N$ and $L^N$ are as in Lemma~\ref{lemma:bt6:1}.

\begin{lemma}\label{lemma:bt6:2}
Let $\r\le N<\infty$.
The projection to the first component $\omega:L^N\setminus\im\chi_w^N\to BS_c(s')\times_K E^N$
is a fibre bundle.
\end{lemma}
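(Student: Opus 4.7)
The plan is to realize $\im\chi_w^N$ as a sub-fibre-bundle of the trivial bundle $L^N\to\BS_c(s')\times_K E^N$ and then to produce local trivializations of its complement by straightening the ``bad set'' fibrewise.

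Set $C_\alpha=C_{w^{-1}s_{\max I}w}$ and $\sigma=\Sigma^N(s',\max I',w)$. By Lemma~\ref{lemma:bt6:1}, the subspace $\im\chi_w^N$ coincides with the set of pairs $(y,K\tilde e)\in L^N$ satisfying $C_\alpha\tilde e=C_\alpha\sigma(y)$. Equivalently, it is the preimage of the diagonal under the continuous map $L^N\to (E^N/C_\alpha)^2$, $(y,K\tilde e)\mapsto(q\sigma(y),\,q(K\tilde e))$, where $q:E^N/K\to E^N/C_\alpha$ is induced by $K\tilde e\mapsto C_\alpha\tilde e$. Because $C_\alpha$ is a compact subgroup of $U(\r)$ (Section~\ref{Stiefel manifolds}) and $U(\r)$ acts freely on $E^N$, the quotient $E^N\to E^N/C_\alpha$ is a principal $C_\alpha$-bundle; in particular $E^N/C_\alpha$ is a smooth manifold and $q$ is a fibre bundle with fibre $C_\alpha/K$.

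Given $y_0\in\BS_c(s')\times_K E^N$, I would choose a local section $s:V\to E^N$ of this principal bundle on an open neighborhood $V$ of $v_0:=q\sigma(y_0)$, arranging (by shrinking) that $V$ is diffeomorphic to an open Euclidean ball centered at $v_0$. This produces the trivialization $q^{-1}(V)\simeq V\times(C_\alpha/K)$ via $(v,Kc)\mapsto K(c\cdot s(v))$. Setting $U=(q\sigma)^{-1}(V)$, the bad set inside $U\times(E^N/K)$ becomes the graph
\[
\{(y,\,q\sigma(y),\,Kc)\mid y\in U,\ Kc\in C_\alpha/K\}\subset U\times V\times(C_\alpha/K),
\]
while nothing is removed from the part of $L^N$ lying over $U\times\bigl(E^N/K\setminus q^{-1}(V)\bigr)$. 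After shrinking $U$ so that $q\sigma(U)$ lies in a compact sub-ball of $V$, I would build a continuous family $\phi_y:V\to V$ of self-homeomorphisms, equal to the identity near $\partial V$ and satisfying $\phi_y(q\sigma(y))=v_0$, for instance via bump-function-cutoff translations in the Euclidean chart. Extended by the identity outside $V$, these yield a self-homeomorphism $\Phi$ of $U\times(E^N/K)$ over $U$ which sends the bad set onto the constant slice $U\times q^{-1}(v_0)$. Hence $\omega^{-1}(U)\simeq U\times\bigl((E^N/K)\setminus q^{-1}(v_0)\bigr)$, giving local triviality.

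The main technical obstacle will be ensuring that each $\phi_y$, after cutoff, remains a genuine self-homeomorphism of $V$. This will force a further shrinking of $U$ so that every $\phi_y$ is a small perturbation of the identity; once this is arranged the construction is a standard bump-function argument. Establishing such a trivialization at an arbitrary basepoint then proves that $\omega$ is a fibre bundle, with fibre homeomorphic to $(E^N/K)\setminus q^{-1}(v_0)$.
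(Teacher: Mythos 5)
Your argument is correct, but it takes a genuinely different route from the paper's. Both proofs start from Lemma~\ref{lemma:bt6:1}: writing $\sigma=\Sigma^N(s',\max I',w)$ and $q:E^N/K\to E^N/C_{w^{-1}s_{\max I}w}$ for the natural quotient, the set removed over a point $y$ of the base is exactly the fibre $q^{-1}(q\sigma(y))$. Where you diverge is in how you move these fibres around. The paper uses the transitive \emph{right} action of $U(N)$ on $E^N$: since it commutes with the left actions of $K$ and of $C_{w^{-1}s_{\max I}w}$, it descends to $E^N/K$ and permutes the fibres of $q$; a local section $g:V\to U(N)$ of the orbit map $u\mapsto\sigma(b)\cdot u$ then gives, for every $h$ in the neighbourhood $\sigma^{-1}(V)$ of $b$, a \emph{global} homeomorphism $z\mapsto z\cdot g(\sigma(h))$ of $E^N/K$ carrying the bad fibre over $b$ onto the bad fibre over $h$, and this is already the local trivialization --- no chart, no cutoff, and no manifold structure on $E^N/C_{w^{-1}s_{\max I}w}$ is needed. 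You instead straighten the bad set by an ambient isotopy supported in a Euclidean chart of $E^N/C_{w^{-1}s_{\max I}w}$, lifted through a local trivialization of $q$. This works: the standard radial homeomorphisms of a ball moving the centre to a prescribed interior point, together with their joint continuity in that point, supply the family $\phi_y$ you ask for, and your method is more general in that it applies to any fibre bundle $q$ over a manifold with no compatible transitive group action in sight. But it costs exactly the technical work you flag (the cutoff construction, plus checking that $E^N/C_{w^{-1}s_{\max I}w}$ is a manifold via the free action of the compact group $C_{w^{-1}s_{\max I}w}$), all of which the $U(N)$-action renders unnecessary. Two small points: with the paper's convention that $U(\r)$ acts on $E^N$ on the left, the fibre of $q$ is $K\backslash C_{w^{-1}s_{\max I}w}\cong\C P^1$ rather than $C_{w^{-1}s_{\max I}w}/K$; and the extension by the identity must be taken outside $q^{-1}(K_0)$ for a compact $K_0\subset V$ containing the supports of all $\phi_y$ (not merely ``outside $V$''), which is what makes the glued self-homeomorphism continuous.
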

\begin{proof} We denote $i=\max I$ and $\Sigma=\Sigma^N(s',i-1,w)$ for brevity.
Let $b$ be an arbitrary element of $\BS_c(s')\times_K E^N$.
The right action of the unitary group $U(N)$ on $E^N$ induces the right action of $U(n)$ on $E^N/K$.
We denote this action by $\cdot$.
Let $t:U(N)\to E^N/K$ be the map $t(g)=\Sigma(b)\cdot g$.
As $U(n)$ acts transitively on $E^N$, it acts transitively on $E^N/K$ and $t$ is a fibre bundle.
Therefore, there exists an open neighbourhood of $V$ of $\Sigma(b)$ in $E^N/K$
and a continuous section $g:V\to U(N)$ of $t$.
Hence for any $u\in V$, we get
\begin{equation}\label{eq:bt6:14}
u=t(g(u))=\Sigma(b)\cdot g(u).
\end{equation}
We define $H=\Sigma^{-1}(V)$.
It is an open subset of $BS_c(s')\times_K E^N$ containing~$b$.
We construct the map $\phi:H\times\omega^{-1}(b)\to L^N$ by
$$
\begin{tikzcd}
\big(h,(b,Ke)\big)\arrow[mapsto]{r}{\phi}&\Big(h,Ke\cdot g(\Sigma(h))\Big).
\end{tikzcd}
$$
Suppose that the right-hand side of the above formula belongs to $\im\chi_w^N$.
By Lemma~\ref{lemma:bt6:1}, we get
$$
C_{w^{-1}s_iw}e\cdot g(\Sigma(h))=C_{w^{-1}s_iw}\Sigma(h).
$$
Thus
$$
C_{w^{-1}s_iw}e=C_{w^{-1}s_iw}\Sigma(h)\cdot g(\Sigma(h))^{-1}
$$
By~(\ref{eq:bt6:14}) with $u=\Sigma(h)$, we get
$$
C_{w^{-1}s_iw}e=C_{w^{-1}s_iw}\Sigma(b).
$$
By Lemma~\ref{lemma:bt6:1}, this contradicts the fact that $(b,Ke)\notin\im\chi_w^N$. Thus we actually have the map
$\phi:H\times\omega^{-1}(b)\to\omega^{-1}(H)$.
It is easy to see that $\phi$ is a homeomorphism. Indeed the inverse map $\omega^{-1}(H)\to H\times\omega^{-1}(b)$
is given by
$$
(h,K\tilde e)\mapsto\Big(h,\big(b,K\tilde e\cdot g(\Sigma(h))^{-1}\big)\Big).
$$
We get the following commutative diagram:
$$
\begin{tikzcd}
H\times\omega^{-1}(b)\arrow{rr}{\phi}[swap]{\sim}\arrow{rd}[swap]{p_1}&&\omega^{-1}(H)\arrow{ld}{\omega}\\
&H&
\end{tikzcd}
$$
Finally note that $\omega^{-1}(b)$ are homeomorphic for different $b$, as the space
$\BS_c(s')\times_K E^N$ is connected and compact.
\end{proof}

\subsection{Compliment to a fibre}
We are going now to study the cohomology of the difference $L^N\setminus\im\chi_w^N$.
To this end, we need the following general result.

\begin{lemma}\label{lemma:even_bundles}
Let $X$ be a locally compact Hausdorff space, $\pi:X\to Y$ be a fibre bundle with fibre $Z$,
$y\in Y$ be a point and $\k$ be a commutative ring.
Suppose that $Y$ is compact, Hausdorff, connected and simply connected,
all $H_c^n(Z,\k)$ are free $\k$-modules of finite rank,
$H_c^n(Z,\k)=0$ for odd $n$ and $H_c^n(Y,\k)=0$ for odd $n\le N$.
\medskip
\begin{enumerate}
\item\label{lemma:even_bundles:p:1} The restriction map
$H_c^n(X,\k)\to H_c^n(\pi^{-1}(y),\k)$ is surjective for all
$n<N$.\\[-3pt]
\item\label{lemma:even_bundles:p:2} $H_c^n(X\setminus\pi^{-1}(y),\k)=0$ for odd
$n<N$.\\[-3pt]
\item\label{lemma:even_bundles:p:3} If $H_c^n(Y,\k)$ are free of finite rank for $n\le N$, then
the modules $H_c^n(X\setminus\pi^{-1}(y),\k)$ are also free of finite rank for
$n<N$.
\end{enumerate}
\end{lemma}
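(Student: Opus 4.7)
The approach is to set up the Leray spectral sequence with compact supports for $\pi\colon X\to Y$. Since $\pi$ is a fibre bundle and $Y$ is simply connected, the derived direct images $R^q\pi_!\csh{\k}{X}$ are constant sheaves with stalk $H_c^q(Z,\k)$, and because $Y$ is compact we have $H^p(Y,\k)=H_c^p(Y,\k)$, so the $E_2$-page takes the tensor-product form
$$
E_2^{p,q}=H^p(Y,\k)\otimes_\k H_c^q(Z,\k)\;\Longrightarrow\;H_c^{p+q}(X,\k),
$$
the tensor-product shape being legitimate because $H_c^q(Z,\k)$ is a free $\k$-module of finite rank. By hypothesis this group vanishes whenever $q$ is odd, and also whenever $p$ is odd with $p\le N$.

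The key combinatorial observation is that in the range $p+q<N$ the spectral sequence degenerates at $E_2$. Indeed, the differential $d_r\colon E_r^{p,q}\to E_r^{p+r,q-r+1}$ sends a bidegree with sum $p+q$ to one with sum $p+q+1$, and all four coordinates involved are bounded by $p+q+1\le N$; the two sums have opposite parities, so at least one of them is odd, meaning the corresponding odd coordinate lies in the vanishing range and kills either the source or the target already on the $E_2$-page. Hence $E_\infty^{p,q}=E_2^{p,q}$ for $p+q<N$. Summing along anti-diagonals yields $H_c^n(X,\k)=0$ for odd $n<N$; and under the extra freeness hypothesis of part~\ref{lemma:even_bundles:p:3} every $E_2^{p,q}$ with $p+q<N$ is free of finite rank, the successive filtration extensions split because free modules are projective, and $H_c^n(X,\k)$ is itself free of finite rank for $n<N$.

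Part~\ref{lemma:even_bundles:p:1} now follows from the standard identification of the edge homomorphism of the Leray spectral sequence with restriction to a fibre: the composition
$$
H_c^n(X,\k)\twoheadrightarrow E_\infty^{0,n}\hookrightarrow E_2^{0,n}=H_c^n(Z,\k)
$$
coincides with the restriction $H_c^n(X,\k)\to H_c^n(\pi^{-1}(y),\k)$, and the second arrow is an equality for $n<N$ since every $d_r$ leaving $E_r^{0,n}$ has already been shown to vanish. Parts~\ref{lemma:even_bundles:p:2} and~\ref{lemma:even_bundles:p:3} then drop out of the long exact sequence for the closed subspace $\pi^{-1}(y)\subset X$ with compact supports,
$$
\cdots\to H_c^{n-1}(Z,\k)\to H_c^n(X\setminus\pi^{-1}(y),\k)\to H_c^n(X,\k)\to H_c^n(Z,\k)\to\cdots,
$$
which part~\ref{lemma:even_bundles:p:1} breaks into short exact sequences $0\to H_c^n(X\setminus\pi^{-1}(y),\k)\to H_c^n(X,\k)\to H_c^n(Z,\k)\to 0$ for $n<N$. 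In odd degrees below $N$ both outer terms vanish, giving~\ref{lemma:even_bundles:p:2}; in general the sequences split because $H_c^n(Z,\k)$ is free, exhibiting $H_c^n(X\setminus\pi^{-1}(y),\k)$ as a direct summand of the free finite rank module $H_c^n(X,\k)$, hence itself free of finite rank.

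The main potential obstacle is verifying the input ingredients for the Leray spectral sequence with compact supports, namely that $R^q\pi_!\csh{\k}{X}$ really is the constant sheaf $H_c^q(Z,\k)$ (proper base change plus simple connectedness of $Y$), that the edge map realises restriction to a fibre, and that the tensor-product identification of $E_2$ is correct (where freeness of $H_c^q(Z,\k)$ is essential). These are standard consequences reachable through the Iversen reference already invoked in the paper, so the real substance of the proof lies in the parity bookkeeping in the range $p+q<N$.
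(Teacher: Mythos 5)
Your overall strategy is the same as the paper's: the Leray spectral sequence with compact supports for $\pi$, the parity argument forcing $E_2$-degeneration in total degrees below $N$, the identification of the restriction to the fibre with the edge homomorphism $H_c^n(X,\k)\twoheadrightarrow E_\infty^{0,n}=E_2^{0,n}\cong H_c^n(\pi^{-1}(y),\k)$, and the long exact sequence of the pair $(X,\pi^{-1}(y))$ to deduce parts \ref{lemma:even_bundles:p:2} and \ref{lemma:even_bundles:p:3}. The only real difference of route is that you invoke the edge-map identification as a standard fact, whereas the paper proves it by hand via the functoriality of the spectral sequence along the adjunction unit $\csh{\k}{X}\to\tilde\imath_!\csh{\k}{\pi^{-1}(y)}$ and proper base change; since that identification is indeed standard, this is an acceptable shortcut.

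There is, however, one step that genuinely fails as written: in part \ref{lemma:even_bundles:p:3} you conclude that $H_c^n(X\setminus\pi^{-1}(y),\k)$ is \emph{free} of finite rank because it is a direct summand of the free finite-rank module $H_c^n(X,\k)$. Over an arbitrary commutative ring $\k$ (which is all the lemma assumes) a direct summand of a finitely generated free module is only projective, not free --- for instance $\Z/2$ is a direct summand of the free $\Z/6$-module $\Z/6$, and stably free non-free modules exist even over nice Noetherian domains. The repair is immediate from your own setup and is what the paper does: the kernel of the surjection $H_c^n(X,\k)\to E_2^{0,n}=H_c^n(\pi^{-1}(y),\k)$ is exactly $F^1H_c^n(X,\k)$, which carries a finite filtration whose successive quotients are the modules $E_\infty^{i,n-i}=E_2^{i,n-i}\cong H^i(Y,\k)\otimes_\k H_c^{n-i}(Z,\k)$ for $i\ge1$; under the extra hypothesis of part \ref{lemma:even_bundles:p:3} these are free of finite rank, each extension splits, and an iterated finite direct sum of free finite-rank modules is free of finite rank. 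With that substitution the proof is complete.
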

\begin{proof}\ref{lemma:even_bundles:p:1} Consider the following Cartesian diagram:
$$
\begin{tikzcd}
\pi^{-1}(y)\arrow[hook]{r}{\tilde\imath}\arrow{d}[swap]{\tilde\pi}&X\arrow{d}{\pi}\\
\{y\}\arrow[hook]{r}{i}&Y
\end{tikzcd}
$$
For the map $\pi$, we consider two Leray spectral sequences with compact support,
one for the complex $\csh{\k}{X}$ and the other one
for the complex $\tilde\imath_*\tilde\imath^*\csh{\k}{X}=\tilde\imath_!\csh{\k}{\pi^{-1}(y)}$.
Their second pages are
$$
{\arraycolsep=2pt
\begin{array}{rcl}
\displaystyle E_2^{p,q}&=&H_c^p(Y,\R^q\pi_!\csh{\k}{X}),\\[10pt]
\displaystyle \widetilde E_2^{p,q}&=&H_c^p\Big(Y,\R^q\pi_!\tilde\imath_!\csh{\k}{\pi^{-1}(y)}\Big)
\end{array}}
$$
respectively. As $\pi$ is a fibre bundle, $\R^q\pi_!\csh{\k}{X}$ is a locally constant sheaf with stalk $H_c^q(Z,\k)$.
%
%
%
As $Y$ is simply connected, we get $\R^q\pi_!\csh{\k}{X}=\csh{H_c^q(Z,\k)}{Y}=\csh{\k}{Y}^{\oplus m_q}$, where $m_q$
is the rank of the $\k$-module $H_c^q(Z,\k)$.
Hence $E_2^{p,q}=H_c^p(Y,\k)^{\oplus m_q}$. This module is zero except the following cases:
both $p$ and $q$ are even; $p>N$.
As the differentials coming to and starting from $E_a^{p,q}$ are zero if $a\ge2$ and $p+q+1\le N$,
we get $E_\infty^{p,q}=E_2^{p,q}$ for $p+q+1\le N$.

Now let us compute $\widetilde E_2^{p,q}$. As $i_!$ is exact, we get
$$
\R^q\pi_!\Big(\tilde\imath_!\csh{\k}{\pi^{-1}(y)}\Big)=\R^q(\pi\tilde\imath)_!\csh{\k}{\pi^{-1}(y)}
=\R^q(i\tilde\pi)_!\csh{\k}{\pi^{-1}(y)}=i_!\R^q\tilde\pi_!\csh{\k}{\pi^{-1}(y)}.
$$
The last sheaf is isomorphic to $i_!\csh{H_c^q(\pi^{-1}(y),\k)}{\{y\}}\simeq i_!\csh{H_c^q(Z,\k)}{\{y\}}\simeq i_!\csh{\k}{\{y\}}^{\oplus m_q}$.
Therefore,
$$
\widetilde E_2^{p,q}\simeq H_c^p\(Y,i_!\csh{\k}{\{y\}}^{\oplus m_q}{\{y\}}\)=H_c^p(\{y\},\k)^{\oplus m_q}=H^p(\{y\},\k)^{\oplus m_q}.
$$
Hence $\widetilde E_2^{p,q}=0$ unless $p=0$ and $q$ is even.
So the Leray spectral sequence with compact support
for $\tilde\imath_!\csh{\k}{\pi^{-1}(y)}$ collapses at the second page,
whence $\widetilde E_\infty^{p,q}=\widetilde E_2^{p,q}$.

We would like to use the functoriality of the Leray spectral sequence with compact support for $\pi$ along the map
$\tilde\eta_{\csh{\k}{X}}:\csh{\k}{X}\to\tilde\imath_*\tilde\imath^*\csh{\k}{X}=\tilde\imath_!\csh{\k}{\pi^{-1}(y)}$,
where $\tilde\eta:\id\to\tilde\imath_*\tilde\imath^*$ is the unit of adjunction.
Note that this map induces the restriction $H_c^n(X,\k)\to H_c^n(\pi^{-1}(y),\k)$.

Let us compute the induced map between the second pages. Applying $R\pi_!$, we get the morphism
$$
R\pi_!\tilde\eta_{\csh{\k}{X}}:
R\pi_!\csh{\k}{X}\to R\pi_!\tilde\imath_!\csh{\k}{\pi^{-1}(y)}.
$$
Taking the $q$th cohomology, we get the morphism of sheaves
$$
\R^q\pi_!\tilde\eta_{\csh{\k}{X}}:\R^q\pi_!\csh{\k}{X}\to\R^q\pi_!\tilde\imath_!\csh{\k}{\pi^{-1}(y)}.
$$
Now taking the $p$th cohomologies with compact support on $Y$, we get the map
$$
H_c^p\Big(Y,\R^q\pi_!\tilde\eta_{\csh{\k}{X}}\Big):E_2^{p,q}\to\widetilde E_2^{p,q}.
$$
This map is zero for $p\ne0$. We claim that it is an isomorphism for $p=0$. To prove it, let us write the morphism
$\R\pi_!\tilde\eta_{\csh{\k}{X}}$, applying the proper base change, as follows:
$$
R\pi_!\tilde\eta_{\csh{\k}{X}}:
R\pi_!\csh{\k}{X}\to R\pi_!\tilde\imath_!\csh{\k}{\pi^{-1}(y)}=i_! R\tilde\pi_!\tilde\imath^*\csh{\k}{X}=i_*i^*R\pi_!\csh{\k}{X}.
$$
It follows from the construction of the base change isomorphism (for example,~\cite[the Proof of Proposition~2.5.11]{KS}) that
this morphism is exactly $\eta_{R\pi_!\csh{\k}{X}}$, where $\eta:\id\to i_*i^*$
is the unit of adjunction.
Hence the morphism of sheaves
$$
\R^q\pi_!\tilde\eta_{\csh{\k}{X}}:\R^q\pi_!\csh{\k}{X}\to
i_*i^*\R^q\pi_!\csh{\k}{X}
$$
is conjugate to the identity morphism $i^*\R^q\pi_!\csh{\k}{X}\to i^*\R^q\pi_!\csh{\k}{X}$.
%
Hence the resulting map
$$
H_c^p\Big(Y,\R^q\pi_!\tilde\eta_{\csh{\k}{X}}\Big):E_2^{p,q}=H^p(Y,\k)^{\oplus m_q}\to H^p(\{y\},\k)^{\oplus m_q}=\widetilde E_2^{p,q}.
$$
is the restriction map. It is clearly 
an isomorphism for $p=0$, as $Y$ is connected.

Let us fix an integer $n\le N-1$. We have some decreasing filtrations
$$
\begin{array}{rcl}
H_c^n(X,\k)&=&F^0H_c^n(X,\k)\supset F^1H_c^n(X,\k)\supset\cdots\supset F^{n+1}H_c^n(X,\k)=0,\\[6pt]
H_c^n(\pi^{-1}(y),\k)&=&F^0H_c^n(\pi^{-1}(y),\k)\supset F^1H_c^n(\pi^{-1}(y),\k)\supset\cdots\supset F^{n+1}H_c^n(\pi^{-1}(y),\k)=0
\end{array}
$$
coming from the corresponding Leray spectral sequences. Here
$$
F^iH_c^n(X,\k)/F^{i+1}H_c^n(X,\k)\cong E_\infty^{i,n-i}=E_2^{i,n-i},
$$
$$
F^iH_c^n(\pi^{-1}(y),\k)/F^{i+1}H_c^n(\pi^{-1}(y),\k)\cong\widetilde E_\infty^{i,n-i}=\widetilde E_2^{i,n-i}.
$$
We get $F^iH_c^n(\pi^{-1}(y),\k)=0$ for $i>0$. The restriction map $H_c^n(X,\k)\to H_c^n(\pi^{-1}(y),\k)$
respects filtrations and is surjective as the map $E_2^{0,n}\to\widetilde E_2^{0,n}$
defined above is so. The kernel of the restriction map is $H_c^n(X,\k)\to H_c^n(\pi^{-1}(y),\k)$
is $F^1H_c^n(X,\k)$.

\ref{lemma:even_bundles:p:2} This claim follows from the following exact sequence for odd $n\le N-1$:
$$
H_c^{n-1}(X,\k)\twoheadrightarrow H_c^{n-1}(\pi^{-1}(y),\k)\to H_c^n(X\setminus\pi^{-1}(y),\k)\to H_c^n(X,\k)=0.
$$
Here the last equality follows from the first of the above spectral sequences.

\ref{lemma:even_bundles:p:3} Let $n$ be even and $n\le N-1$. Note that under the assumption of this
case all $E_2^{i,n-i}$ are free of finite rank for any $i=0,\ldots,n$. We have an exact sequence
$$
0=H_c^{n-1}(\pi^{-1}(y),\k)\to H_c^n(X\setminus\pi^{-1}(y),\k)\to H_c^n(X,\k)\to H_c^n(\pi^{-1}(y),\k)
$$
Hence $H_c^n(X\setminus\pi^{-1}(y),\k)\cong F^1H_c^n(X,\k)$ as the kernel of the restriction
$H_c^n(X,\k)\to H_c^n(\pi^{-1}(y),\k)$. This module is free of finite rank, as it has a finite
filtration with such quotients.
\end{proof}

\subsection{Generators} First, we consider restrictions of cohomologies.

\begin{lemma}\label{lemma:bt6:3}
Suppose that $2$ is invertible in $\k$.
\begin{enumerate}
\item\label{lemma:bt6:3:p:1} If $\r<N<\infty$, then $H_c^n(L^N\setminus\im\chi_w^N,\k)=0$ for odd $n<2(N-\r)-1$.\\[-6pt]
\item\label{lemma:bt6:3:p:2} If $\r<N<\infty$, then the restriction map $H^n(L^N,\k)\to H^n(\BS_c(s)\times_K E^N,\k)$
                          induced by $\chi_w^N$ is surjective for $n<2(N-\r)-2$.\\[-6pt]
\item\label{lemma:bt6:3:p:3} For any $n$, the restriction map $H^n(L,\k)\to H^n_K(\BS_c(s),\k)$ induced by $\chi_w$ is surjective.
\end{enumerate}
\end{lemma}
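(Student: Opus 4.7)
The plan is to prove the three parts successively, using~(1) to deduce~(2) via a long exact sequence and then~(2) to deduce~(3) by passage to the direct limit. For part~(1), set $\alpha = w^{-1} s_{\max I} w$ and consider the natural fibre bundle $p\colon E^N/K \to E^N/C_\alpha$, whose fibre is $K \backslash C_\alpha \cong \SU_2/U(1) \cong S^2$ (using $C_\alpha = \phi_\alpha(\SU_2) \cdot K$ with $K$ containing a maximal torus of $\phi_\alpha(\SU_2)$). From the explicit description of $\im \chi_w^N$ in Lemma~\ref{lemma:bt6:1}, for any $b \in \BS_c(s') \times_K E^N$ the intersection of $\im \chi_w^N$ with the $p_1$-fibre over $b$ (where $p_1$ is the first projection of $L^N$) identifies with $\{b\} \times p^{-1}(y_b)$ for $y_b = C_\alpha \cdot \Sigma^N(s', \max I', w)(b) \in E^N/C_\alpha$. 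Consequently the fibre of the bundle $\omega$ from Lemma~\ref{lemma:bt6:2} is homeomorphic to $E^N/K \setminus p^{-1}(y_b) \cong E^N/K \setminus S^2$.

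Next I would apply Lemma~\ref{lemma:even_bundles} to $p$. The base $E^N/C_\alpha$ is compact Hausdorff and simply connected for $N > \r$, by the homotopy long exact sequence of the principal $C_\alpha$-bundle $E^N \to E^N/C_\alpha$ (using $E^N$ simply connected and $C_\alpha$ connected). Since $C_\alpha$ is a connected compact Lie group with maximal torus $K$ and Weyl group of order $2$, invertibility of $2$ in $\k$ together with the Weyl-group transfer yield $H^*(BC_\alpha,\k) \cong H^*(BK,\k)^{s_\alpha}$, which is concentrated in even degrees and free of finite rank in each degree; an argument analogous to Lemma~\ref{lemma:12} carries this over to $H^n(E^N/C_\alpha,\k)$ in the range $n \le 2(N-\r)$. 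The fibre $S^2$ has free cohomology in even degrees only. Lemma~\ref{lemma:even_bundles} thus gives $H_c^n(E^N/K \setminus S^2,\k) = 0$ for odd $n < 2(N-\r)$, with the remaining groups free of finite rank. Plugging this into the Leray spectral sequence with compact supports for $\omega$, whose base $\BS_c(s') \times_K E^N$ is simply connected (it fibres over the simply connected $E^N/K$ with fibre $\BS_c(s')$, itself simply connected by Corollary~\ref{corollary:1} since a space with affine paving by even-dimensional cells is so), the higher direct image sheaves are constant, and the odd vanishing of both the base cohomology (Lemma~\ref{lemma:14}) and the fibre cohomology forces $E_2^{p,q} = 0$ whenever $p + q$ is odd and lies in the relevant range. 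This yields part~(1).

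For~(2), I would invoke the long exact sequence with compact supports for the closed embedding $\im \chi_w^N \hookrightarrow L^N$, namely
$$
\cdots \to H_c^n(L^N,\k) \to H_c^n(\im \chi_w^N,\k) \to H_c^{n+1}(L^N \setminus \im \chi_w^N,\k) \to \cdots.
$$
For even $n < 2(N-\r)-2$, the vanishing from part~(1) at odd degree $n+1 < 2(N-\r)-1$ gives surjectivity. For odd $n$ in range, both sides vanish by the K\"unneth formula together with Lemmas~\ref{lemma:14} and~\ref{lemma:12}, so surjectivity is automatic. For~(3), I would pass to the direct limit $N \to \infty$: a straightforward variant of Lemma~\ref{lemma:bt6:4} for $L^N \to L$ identifies $H^n(L,\k)$ with $H^n(L^N,\k)$ for $N$ large enough in any fixed degree $n$, compatibly with the restriction to $H^n_K(\BS_c(s),\k)$, so surjectivity transfers from finite $N$ to the limit.

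The main obstacle is the identification $\im \chi_w^N \cap p_1^{-1}(b) = \{b\} \times p^{-1}(y_b)$ as a single fibre of $p$, and the verification that $H^*(E^N/C_\alpha,\k)$ is concentrated in even degrees in range. The latter is the sole place where the assumption $2 \in \k^\times$ enters, via the Weyl-group transfer isomorphism $H^*(BC_\alpha,\k) \cong H^*(BK,\k)^{s_\alpha}$.
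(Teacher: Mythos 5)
Your proposal is correct and follows the same architecture as the paper's proof: identify $\omega^{-1}(b)$ with $(E^N/K)\setminus\tau^{-1}(y)$ via Lemma~\ref{lemma:bt6:1}, feed this into Lemma~\ref{lemma:even_bundles} and the Leray spectral sequence with compact supports for $\omega$ to get part (1), deduce part (2) from the long exact sequence of the closed pair $\im\chi_w^N\subset L^N$ together with the odd vanishing of $H^n(\BS_c(s)\times_KE^N,\k)$, and pass to $N=\infty$ via Lemma~\ref{lemma:bt6:4} for part (3). The one genuinely different ingredient is how you establish that $H^n(E^N/C_{w^{-1}s_{\max I}w},\k)$ vanishes in odd degrees in the stable range: you use the transfer for the $S^2$-bundle $E^N/K\to E^N/C_{w^{-1}s_{\max I}w}$ (equivalently the Weyl-invariants description of $H^\bullet(BC_\alpha,\k)$), where $2\in\k^\times$ enters through $\chi(S^2)=2$; the paper instead runs the Gysin sequence of this bundle and kills the degree-three class $z$ using $2z=0$. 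Both are valid and both localize the use of the hypothesis on $2$ in exactly the same place; the Gysin route has the mild advantage of also giving that $\tau^*$ is injective onto a submodule of the known free module $H^\bullet(E^N/K,\k)$, which is what the paper implicitly leans on when it needs the coefficients in the Leray spectral sequence for $\omega$ to behave well, whereas your transfer route only gives projectivity (a direct summand of a free module) of $H^\bullet(BK,\k)^{s_\alpha}$ over a general $\k$ --- this is still enough for all the vanishing statements, so nothing breaks. Two small points to tighten: your justification that $\BS_c(s')$ is simply connected ``since a space with an affine paving by even cells is so'' is not immediate from Definition of affine paving as stated (no cell structure is provided); the paper's argument via the Bott-tower structure (an iterated $\C P^1$-bundle) is the safe one. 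Also, in part (2) you only need the target $H^n(\BS_c(s)\times_KE^N,\k)$ to vanish for odd $n$, not both sides.
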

\begin{proof}\ref{lemma:bt6:3:p:1} We denote $i=\max I$ for brevity.
We write the Leray spectral sequence for cohomologies with compact support for
the fibre bundle $\omega$ as in Lemma~\ref{lemma:bt6:2}.
It has the following second page:
$$
E_2^{p,q}=H_c^p\big(\BS_c(s')\times_K E^N,H_c^q(\omega^{-1}(b),\k)\big),
$$
where $b$ is an arbitrary point of the space $\BS_c(s')\times_K E^N$,
which is simply connected. To prove the last statement, first note that $\BS_c(s')$
is simply connected as a Bott tower and then consider the long exact sequence of homotopy
groups for the fibre bundle $\BS_c(s')\times_K E^N\to E^N/K$ with fibre $\BS_c(s')$.

By Lemma~\ref{lemma:bt6:1}, the space $\omega^{-1}(b)$ is homeomorphic to the space $(E^N/K)\setminus\tau^{-1}(y)$, where
$\tau:E^N/K\to E^N/C_{w^{-1}s_iw}$ is the natural quotient map and $y$ is an arbitrary point of $E/C_{w^{-1}s_iw}$.
We know that $\tau$ is a fibre bundle with fibre $K\backslash C_{w^{-1}s_iw}=\{Kc\suchthat c\in C_{w^{-1}s_iw}\}\cong\C P^1$.
Let us apply the Gysin sequence to this fibre bundle as in~\cite[Example 5.C]{McCleary}.
As
$\C P^1$ is homeomorphic to the $2$-sphere,
we have an exact sequence
$$
\begin{tikzcd}
H^{n-3}(E^N/C_{w^{-1}s_iw},\k)\arrow{r}{z \cup}&H^n(E^N/C_{w^{-1}s_iw},\k)\arrow{r}{\tau^*}& H^n(E^N/K,\k)
\end{tikzcd}
$$
where $z$ is some element of $H^3(E^N/C_{w^{-1}s_iw},\k)$ such that $2z=0$. As $2$ is invertible in $\k$, we get $z=0$.
If $n$ is less than $2(N-\r)+1$ and is odd, then $H^n(E^N/K,\k)=0$ by Lemma~\ref{lemma:12}\ref{lemma:12:p:2}.
Hence and from the above exact sequence, we get $H^n(E^N/C_{w^{-1}s_iw},\k)=0$ for such $n$.
Note that $E^N/C_{w^{-1}s_iw}$ is simply connected and connected by an argument similar to Lemma~\ref{lemma:12}\ref{lemma:12:p:1}.
Applying Lemma~\ref{lemma:even_bundles} to $\tau$, we get that $H_c^q(\omega^{-1}(b),\k)=0$ for odd $q<2(N-\r)$.

By Lemma~\ref{lemma:14},
we get that $E_2^{p,q}$ is zero except the following cases: $p\ge2(N-\r)$; $q\ge2(N-\r)$;
$p$ and $q$ are both even.
It is easy to note that the differentials coming to and starting from $E_a^{p,q}$ are zero if $a\ge2$ and $p+q<2(N-\r)-1$.
Hence $E_\infty^{p,q}=E_2^{p,q}$ for $p+q<2(N-\r)-1$ and the claim immediately follows.

\ref{lemma:bt6:3:p:2} Let $n<2(N-\r)-2$. If $n$ is even, then  by the first part of this lemma,
we get an exact sequence
$$
H^n(L^N,\k)\to H^n(\BS_c(s)\times_KE^N,\k)\to H_c^{n+1}(L^N\setminus\im\chi_w^N,\k)=0.
$$
If $n$ is odd, then the restriction under consideration is surjective as $H^n(\BS_c(s)\times_KE^N,\k)=0$
by Lemma~\ref{lemma:14}.

\ref{lemma:bt6:3:p:3} This result follows from the previous part and Lemma~\ref{lemma:bt6:4} and the following commutative
diagram
$$
\begin{tikzcd}
H^n(L^N,\k)\arrow{r}{(\chi_w^N)^*}\arrow[equal]{d}[swap]{\wr}&H^n(\BS_c(s)\times_KE^N,\k)\arrow[equal]{d}{\wr}\\
H^n(L,\k)\arrow{r}{\chi_w^*}&H_K^n(\BS_c(s),\k)
\end{tikzcd}
$$
which holds for $n<2(N-\r)+1$.
\end{proof}

\begin{lemma}\label{lemma:bt6:5} Let $\k$ be a commutative ring of finite global dimension with invertible $2$.
Let $I$ be a finite totally ordered set and $s:I\to\mathcal T(W)$ and $w:I\cup\{-\infty\}\to W$ be arbitrary maps.
Then all elements $\Sigma(s,i,w_i)^*(h)$, where $i\in I\cup\{-\infty\}$
and $h\in S$, generate $H_K^\bullet(\BS_c(s),\k)$ as a ring with respect to addition and the cup product.
\end{lemma}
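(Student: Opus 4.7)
The plan is to induct on the cardinality of $I$. If $I = \emptyset$, then $\BS_c(s) = \pt$ and the only allowed generator is $\Sigma(s, -\infty, w_{-\infty})$; under the identification $H_K^\bullet(\pt, \k) = S$ this map coincides with $\rho_{w_{-\infty}^{-1}}/K : E_K/K \to E_K/K$, so by~(\ref{eq:pt:2}) its pullback sends $h$ to $w_{-\infty} h$, and this exhausts $S$ because $W$ acts on $S$ by ring automorphisms.

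For the inductive step, set $w := w_{\max I}$ and let $s' = s|_{I'}$. By Lemma~\ref{lemma:bt6:3}\ref{lemma:bt6:3:p:3}, the pullback $\chi_w^* : H^\bullet(L, \k) \to H_K^\bullet(\BS_c(s), \k)$ is a surjective ring homomorphism, so it suffices to produce ring generators of $H^\bullet(L, \k)$ and check that their $\chi_w^*$-images have the required shape. Since $L = (\BS_c(s') \times_K E^\infty) \times (E^\infty/K)$ is a Cartesian product with $H^\bullet(E^\infty/K, \k) = S$ free over $\k$, the K\"unneth theorem --- applied at finite level $L^N$, where Lemma~\ref{lemma:12}\ref{lemma:12:p:2} gives freeness of finite rank of $H^\bullet(E^N/K, \k)$ in the relevant range, and transported to $L$ via Lemmas~\ref{lemma:15} and~\ref{lemma:bt6:4} --- shows that $H^\bullet(L, \k)$ is generated as a ring by $p_1^* H^\bullet(\BS_c(s') \times_K E^\infty, \k)$ together with $p_2^* S$, where $p_1, p_2$ are the two Cartesian projections out of $L$.

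The compatibilities $p_1 \circ \chi_w = \tr \times_K \id$ and $p_2 \circ \chi_w = \Sigma(s, \max I, w)$ are immediate from the definition of $\chi_w$. A direct computation with formula~(\ref{eq:bt9:2}) also gives $\Sigma(s', i, w_i) \circ (\tr \times_K \id) = \Sigma(s, i, w_i)$ for every $i \in I' \cup \{-\infty\}$, because truncating $c$ does not affect the partial product $c^i = c_{\min I} \cdots c_i$ when $i \le \max I'$. Applying the inductive hypothesis to $s'$ and $w|_{I' \cup \{-\infty\}}$, the first-factor generators pull back under $\chi_w^*$ to $\Sigma(s, i, w_i)^*(h)$ for $i \in I' \cup \{-\infty\}$, while the second-factor generators pull back to $\Sigma(s, \max I, w_{\max I})^*(h)$. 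Together these exhaust the required range $i \in I \cup \{-\infty\}$, and the surjectivity of $\chi_w^*$ closes the induction.

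The main obstacle will be the K\"unneth step: since $L$ is a product of non-compact infinite-dimensional spaces, one cannot invoke K\"unneth directly but must work at finite level $L^N$ with $N \gg 0$, where the freeness of finite rank of $H^\bullet(E^N/K, \k)$ in low degrees from Lemma~\ref{lemma:12}\ref{lemma:12:p:2} kills any Tor correction (using that $\k$ has finite global dimension), after which Lemma~\ref{lemma:bt6:4} transports the ring-generation statement back to $L$. Everything else is an unwinding of the definitions of $\Sigma$ and $\chi_w$ under truncation.
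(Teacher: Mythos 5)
Your proposal is correct and follows essentially the same route as the paper: induction on $|I|$, the base case via~(\ref{eq:pt:2}), and the inductive step via the surjectivity of $\chi_{w}^*$ from Lemma~\ref{lemma:bt6:3}\ref{lemma:bt6:3:p:3} combined with the K\"unneth decomposition $H^\bullet(L,\k)\cong H^\bullet_K(\BS_c(s'),\k)\otimes_\k S$ (justified through Lemma~\ref{lemma:bt6:4}) and the identities $p_1\chi_w=\tr\times_K\id$, $p_2\chi_w=\Sigma(s,\max I,w)$. Your extra care about working at finite level $L^N$ for the K\"unneth step, and the observation that $\Sigma(s',i,w_i)\circ(\tr\times_K\id)=\Sigma(s,i,w_i)$, only make explicit what the paper's proof uses implicitly.
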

\begin{proof}
Let us apply the induction on the length of $s$. If $I=\emptyset$, then $\BS_c(s)$ is a singleton and
$\Sigma(s,-\infty,w_{-\infty})^*(h)=w_{-\infty}^{-1}h$ by~(\ref{eq:pt:2}). Obviously any element of $H_K^\bullet(\BS_c(s),\k)=S$
has this form.

Now suppose that $I\ne\emptyset$. We denote $i=\max I$ for brevity.
By the K\"unneth formula and Lemma~\ref{lemma:bt6:4}, we get
$$
H^\bullet(L,\k)\cong H^\bullet_K(BS_c(s'),\k)\otimes_\k S.
$$
By the inductive hypothesis, $H^\bullet(L,\k)$ is generated as a ring by elements
$p_1^*\Sigma(s',j,w_j)^*(h)$ and $p_2^*(h)$,
where $j\in I'\cup\{-\infty\}$, $h\in S$ and $p_1$ and $p_2$ are projections of
$L$ to its first and second component respectively.

Hence by Lemma~\ref{lemma:bt6:3}\ref{lemma:bt6:3:p:3} applied to $\chi_{w_i}$, we get that
$H^\bullet_K(\BS_c(s),\k)$ is generated as a ring by elements
$$
\chi_{w_i}^*p_1^*\Sigma(s',j,w_j)^*(h)=(\Sigma(s',j,w_j)p_1\chi_{w_i})^*(h)=\Sigma(s,j,w_j)^*(h),
$$
where $j\in I'\cup\{-\infty\}$, and
$
\chi_{w_i}^*p_2^*(h)=(p_2\chi_{w_i})^*(h)=\Sigma(s,i,w_i)^*(h)
$.
\end{proof}

\subsection{Copy and concentration operators}\label{Copy and concentration operators}
The results we have proved allow us to explain the existence of the operators of copy and concentration defined in~\cite{BTECBSV}
at least for a principal ideal domain $\k$ with invertible $2$. Let $\X_c(s)$ denote the image of the restriction
$H^\bullet(\BS_c(s),\k)\to H^\bullet(\Gamma(s),\k)$. We suppose that $s=(s_1,\ldots,s_n)$ is a nonempty
sequence.
For any $g\in\X_c(s')$, we consider the function
$\Delta g:\Gamma(s)\to S$ defined by $\Delta g(\gamma)=g(\gamma')$. We called this element
the {\it copy} of $g$, see~\cite[Section~4.2]{BTECBSV}. Let $h\in H^\bullet_T(\BS_c(s'),\k)$
be an element whose restriction to $\Gamma(s')$ is $g$. Then it is easy to understand that
$\Delta g$ is the restriction of $\tr^*(h)$ to $\Gamma(s)$, where $\tr:\BS_c(s)\to\BS_c(s')$
is the truncation map defined at the beginning of Section~\ref{Embeddings of Borel constructions}.
Indeed, let $\gamma\in\Gamma(s)$ and $j_\gamma:E^\infty/K\to \BS_c(s)\times_KE^\infty$
and $j_{\gamma'}:E^\infty/K\to \BS_c(s')\times_KE^\infty$ be the maps as
in Section~\ref{Equivariant_cohomology_of_a_point}. We get
$$
\tr^*(h)(\gamma)=j_\gamma^*\tr^*(h)=(\tr j_\gamma)^*(h)=j_{\gamma'}^*(h)=g(\gamma')=\Delta g(\gamma).
$$

On the other hand, for any $t\in\{1,s_n\}$, we consider the function $\nabla_tg$ called the {\it concentration}
of $g$ at $t$, see~Section~\cite[Section~4.2]{BTECBSV}. It is defined by
$$
\nabla_tg(\gamma)=
\left\{
\begin{array}{ll}
\gamma^n(-\alpha_n)g(\gamma')&\text{ if }\gamma_n=t;\\[6pt]
0&\text{ otherwise},
\end{array}
\right.
$$
where $\alpha_n$ is a positive root such that $s_n=s_{\alpha_n}$.
Let $c\in S$. Then by~(\ref{eq:pt:2}), the restriction of $\Sigma(s,i,1)^*(c)$ to $\gamma\in\Gamma(s)$ is given by
$$
\Sigma(s,i,1)^*(c)(\gamma)=j_\gamma^*\,\Sigma(s,i,1)^*(c)=(\Sigma(s,i,1)j_\gamma)^*(c)=(\rho_{(\gamma^i)^{-1}}/K)^*(c)
=\gamma^ic.
$$
Hence we get
$$
\nabla_tg=-\dfrac{\Sigma(s,n-1,1)^*(t\alpha_n)+\Sigma(s,n,1)^*(\alpha_n)}2\bigg|_{\Gamma(s)}\Delta g.
$$
Finally note that to get exactly the operators from~\cite{BTECBSV}, we need to consider only sequences $s$
of simple reflections and identify compactly defined and usual Bott-Samelson
varieties as well as $K$- and $T$-equivariant cohomologies.

\def\bs{\\[-4pt]}

\end{document}